\documentclass[12pt,reqno] {amsart}%
\usepackage{amsfonts}
\usepackage{amsmath}
\usepackage{amssymb}
\usepackage{graphicx}%
\usepackage{hyperref}
\hypersetup{
    colorlinks = true,
    linkcolor = blue,
    urlcolor = blue,
    citecolor = blue,
    anchorcolor = black,
}

\usepackage{cleveref}
\usepackage{comment}
 \usepackage{mathrsfs}
 \usepackage{float}
 \usepackage{tikz}
 \usepackage{bm}
 \usepackage{ulem}
\usepackage{slashed} 
 \usepackage{braket} 
\usepackage{enumerate}
\usepackage{rotating}

\allowdisplaybreaks

\makeatletter
\def\l@section{\@tocline{1}{0pt}{1pc}{}{}}
\def\l@subsection{\@tocline{2}{0pt}{1pc}{4.6em}{}}
\def\l@subsubsection{\@tocline{3}{0pt}{1pc}{7.6em}{}}
\renewcommand{\tocsection}[3]{%
  \indentlabel{\@ifnotempty{#2}{\makebox[2.3em][l]{%
    \ignorespaces#1 #2.\hfill}}}#3}
\renewcommand{\tocsubsection}[3]{%
  \indentlabel{\@ifnotempty{#2}{\hspace*{2.3em}\makebox[2.3em][l]{%
    \ignorespaces#1 #2.\hfill}}}#3}
\renewcommand{\tocsubsubsection}[3]{%
  \indentlabel{\@ifnotempty{#2}{\hspace*{4.6em}\makebox[3em][l]{%
    \ignorespaces#1 #2.\hfill}}}#3}
\makeatother
\theoremstyle{plain}
\numberwithin{equation}{section}
\newtheorem{theorem}{Theorem}[section]
\newtheorem{corollary}[theorem]{Corollary}
\newtheorem{lemma}[theorem]{Lemma}
\newtheorem{proposition}[theorem]{Proposition}

\newtheorem{definition}[theorem]{Definition}
\newtheorem{remark}[theorem]{Remark}
\newtheorem{example}[theorem]{Example}

\newtheorem{main theorem}{Main Theorem}
\newtheorem{conjecture}[theorem]{Conjecture}

\newcommand{\be}{\begin{equation}}
\newcommand{\ee}{\end{equation}}

\renewcommand{\leq}{\leqslant}
\renewcommand{\geq}{\geqslant}

\begin{document}

\title{Equivalence of states and bimodule quantum channels}


\author{Linzhe Huang}
\address{Linzhe Huang, Beijing Institute of Mathematical Sciences and Applications, Beijing, 101408, China}
\email{huanglinzhe@bimsa.cn}


\author{Chunlan Jiang}
\address{Chunlan Jiang, School of Mathematical Sciences, Hebei Normal University, Shijiazhuang, Hebei, 050024, China}
\email{cljiang@hebtu.edu.cn}

\author{Zhengwei Liu}
\address{Zhengwei Liu, Yau Mathematical Sciences Center and Department of Mathematics, Tsinghua University, Beijing, 100084, China \\
 Beijing Institute of Mathematical Sciences and Applications, Beijing, 101408, China}
\email{liuzhengwei@mail.tsinghua.edu.cn}

\author{Jinsong Wu}
\address{Jinsong Wu, Beijing Institute of Mathematical Sciences and Applications, Beijing, 101408, China}
\email{wjs@bimsa.cn}

\maketitle

\begin{abstract}
We study the action of quantum channels on states on a von Neumann algebra $\mathcal{M}$ with conserved quantities encoded by a subalgebra $\mathcal{N}$. We define a phase as an equivalence class of normal states on $\mathcal{M}$ under the action of quantum channels preserving $\mathcal{N}$. First, we establish a relative quasi-entropic characterization of phase equivalence using the Petz recovery map. We further introduce the operator algebra (OA) symmetry of a state and define an equivalence relation between OA symmetries via bishifts of biprojections. When $\mathcal{N}\subseteq\mathcal{M}$ is an irreducible finite-index subfactor of type II$_1$, we prove that phase equivalence implies OA symmetry equivalence. Consequently, OA symmetry breaking provides a sufficient mechanism for detecting phase transitions. Furthermore, combining finite-index subfactor theory with quantum Fourier analysis, we prove that the cardinality of each phase is finite and obtain an explicit upper bound depending only on the Jones index.
\end{abstract}

{\bf Keywords.} Finiteness of phases, Jones index, bimodule quantum channel, operator algebra symmetry, Petz recovery map

{\bf MSC.} 46L37, 46L30, 46L60


\section{Introduction}
Phase transitions represent fundamental phenomena in mathematical physics, where macroscopic properties of a system change qualitatively as external parameters vary. The Landau theory of phase transitions provides a classical framework for understanding these phenomena through the concept of an order parameter and the symmetry structure of the underlying system \cite{Lan37}. In this framework, the free energy is constrained by the symmetry group of the Hamiltonian, and phase transitions are interpreted as spontaneous symmetry-breaking processes, characterized by a reduction of symmetry from a group to a subgroup \cite{GL50,Lan37}. Thus, symmetry breaking provides a fundamental mechanism for distinguishing different phases within the Landau paradigm.

Beyond the Landau paradigm, information-theoretic approaches have introduced entropy as a quantitative tool for characterizing quantum phases and phase transitions. Petz established a rigorous framework based on quantum relative entropy and recovery maps, in which the equality condition for the monotonicity of relative entropy is characterized by the existence of a recovery channel \cite{Pet88,Pet03}. The corresponding Petz recovery map provides a quantitative description of the reversibility of quantum processes and has subsequently found broad applications in quantum error correction, approximate quantum Markov chains, quantum channel reversibility, and the study of correlations in many-body quantum systems; see, e.g., \cite{BDL16,FR15,JRSWW18,SFR16}. This information-theoretic perspective provides a complementary way of characterizing quantum structures, in which entropy and recoverability replace local order parameters as fundamental quantities.

More recently, the study of topological phases has demonstrated that the Landau symmetry-breaking paradigm is not sufficient to classify all quantum phases of matter. Unlike conventional phases characterized by local order parameters, topological phases are distinguished by global topological invariants, long-range entanglement, and emergent non-local excitations, which may arise without conventional symmetry breaking \cite{GW09,Kit06,KP06,LW05,Wen95}. The mathematical description of such phases requires structures beyond ordinary group symmetry, where fusion categories and modular tensor categories provide natural frameworks for encoding anyonic excitations, fusion rules, and braiding statistics \cite{BK01,ENO05,Kit03,Tur94}. More generally, generalized, higher, and categorical symmetries have been developed to extend the conventional notion of symmetry beyond group actions and to provide broader frameworks for describing symmetry-breaking phases and topological orders \cite{GKSW15,KLWZZ20}. These developments suggest that more general notions of symmetry may be needed to capture quantum phases beyond the conventional Landau framework.

In this paper, we develop an operator-algebraic framework for studying phase transitions in quantum systems with conserved quantities. Our approach combines three complementary perspectives. First, we realize the entropy and recoverability framework of Petz within the setting of subfactor theory. Second, we introduce an operator algebra symmetry that extends conventional group symmetry and show that its breaking provides a sufficient mechanism for a phase transition. Third, and more importantly, we exploit the finite-index structure of subfactors to obtain quantitative bounds on the cardinality of phases.

More precisely, we consider an irreducible finite-index inclusion of factors

$$
    \mathcal{N}\subseteq\mathcal{M},
$$
where $\mathcal{M}$ is the von Neumann algebra generated by observables and $\mathcal{N}$ encodes the conserved quantities. The inclusion $\mathcal{N}\subseteq\mathcal{M}$ provides the basic algebraic structure through which the conserved quantities are incorporated into our framework.

Our first result provides a subfactor realization of the recoverability principle underlying Petz's theorem. Recall that Petz's sufficiency theorem characterizes the equality condition in the monotonicity of quantum relative entropy in terms of the existence of a recovery map \cite{Pet88,Pet03}. In our setting, the relevant quantum channels are required to be $\mathcal{N}$-bimodule channels, thereby incorporating the conserved quantities encoded by $\mathcal{N}$. 
We define phases in terms of equivalence under bimodule quantum channels. More precisely, two normal states $\varphi$ and $\psi$ on $\mathcal{M}$ are said to be equivalent if there exist bimodule quantum channels $\Phi$ and $\Psi$ such that
\begin{align*}
\varphi\Phi=\psi
\qquad\text{and}\qquad
\psi\Psi=\varphi.
\end{align*}
We then establish a relative quasi-entropic criterion characterizing this phase equivalence.
\begin{theorem}[Relative quasi-entropic criterion, see Theorem \ref{thm:relative quasi entropy}]
\label{thm:main theorem 1}
Suppose $\mathcal{N}\subseteq\mathcal{M}$ is an irreducible finite inclusion of factors and $\varphi,\psi$ are faithful normal states, with $0<\beta<1$. Suppose there exists a $\mathcal{N}$-bimodule quantum channel $\Phi$ such that $\varphi\Phi=\psi$. Then $\varphi$ is equivalent to $\psi$ if and only if
\begin{align*}
H_{\beta}(\varphi|\widetilde{\varphi})
=
H_{\beta}(\psi|\widetilde{\varphi}),
\end{align*}
where

$$
    \widetilde{\varphi}
    =
    \varphi\mathcal{E}_{\mathcal{N}}
$$
and $\mathcal{E}_{\mathcal{N}}$ is the conditional expectation from $\mathcal{M}$ onto $\mathcal{N}$.
\end{theorem}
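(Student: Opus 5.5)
The plan is to reduce the statement to Petz's sufficiency theorem for quasi-entropies. The reduction rests on two structural facts about $\mathcal{N}$-bimodule quantum channels over an irreducible finite inclusion:
\begin{enumerate}[(i)]
\item every such channel fixes the reference state $\widetilde{\varphi}$;
\item the Petz recovery map of such a channel with respect to $\widetilde{\varphi}$ is again an $\mathcal{N}$-bimodule quantum channel.
\end{enumerate}

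\textbf{Step 1 (invariance of the reference state).} Let $\Phi$ be an $\mathcal{N}$-bimodule quantum channel. Unitality and bimodularity give $\Phi(n)=n\Phi(1)=n$ for $n\in\mathcal{N}$. Hence $\mathcal{E}_{\mathcal{N}}\Phi$ is a unital completely positive $\mathcal{N}$-bimodule map from $\mathcal{M}$ onto $\mathcal{N}$. By irreducibility, $\mathcal{N}'\cap\mathcal{M}=\mathbb{C}$, so the conditional expectation onto $\mathcal{N}$ is unique among such bimodule maps; this is the standard identification of bimodule maps with elements of relative commutants. Therefore $\mathcal{E}_{\mathcal{N}}\Phi=\mathcal{E}_{\mathcal{N}}$, which gives $\widetilde{\varphi}\Phi=\widetilde{\varphi}$. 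Applying this to $\psi=\varphi\Phi$ yields $\widetilde{\psi}=\widetilde{\varphi}$. The same argument with the trace shows that $\tau\Phi=\tau$, so $\Phi$ has a trace-adjoint $\Phi^{\dagger}$, which is again a unital $\mathcal{N}$-bimodule channel.

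\textbf{Step 2 (the easy direction).} Petz's quasi-entropy $H_\beta$, for $0<\beta<1$, is monotone under unital Schwarz maps acting on pairs of states. Combined with Step 1, this gives $H_\beta(\psi|\widetilde{\varphi})=H_\beta(\varphi\Phi|\widetilde{\varphi}\Phi)$, which is at least (or at most, depending on the sign convention) $H_\beta(\varphi|\widetilde{\varphi})$. If $\varphi$ and $\psi$ are equivalent, choose $\Psi$ with $\psi\Psi=\varphi$. Applying monotonicity to $\Psi$, which also fixes $\widetilde{\varphi}$ by Step 1, gives the reverse inequality, hence equality.

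\textbf{Step 3 (the converse).} Assume equality holds. By the Petz/Hiai--Mosonyi--Petz--B\'eny equality theorem for $\beta$-quasi-entropies, the Petz recovery map $\Psi$ of $\Phi$ with respect to $\widetilde{\varphi}$ satisfies $\psi\Psi=\varphi$; it also satisfies $\widetilde{\varphi}\Psi=\widetilde{\varphi}$ automatically. It remains to show that $\Psi$ is an $\mathcal{N}$-bimodule channel.

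Write $\widetilde{\varphi}=\tau(h\,\cdot)$ with density $h=\frac{d(\varphi|_{\mathcal{N}})}{d\tau}$ affiliated with $\mathcal{N}$. Then
$$
\Psi(x)=h^{-1/2}\Phi^{\dagger}\bigl(h^{1/2}xh^{1/2}\bigr)h^{-1/2}.
$$
For $n,m\in\mathcal{N}$, write $h^{1/2}n=(h^{1/2}nh^{-1/2})h^{1/2}$ and $mh^{1/2}=h^{1/2}(h^{-1/2}mh^{1/2})$, where the conjugated elements lie in $\mathcal{N}$. The $\mathcal{N}$-bimodularity of $\Phi^\dagger$ then lets us pull these factors out, giving $\Psi(nxm)=n\Psi(x)m$. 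Complete positivity is clear from the formula. Unitality follows from $\Psi(1)=h^{-1/2}\Phi^{\dagger}(h)h^{-1/2}=1$, since $h\in\mathcal{N}$ is fixed by $\Phi^\dagger$. So $\Psi$ is the required channel, and $\varphi$ is equivalent to $\psi$.

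\textbf{Main obstacle.} I expect the main difficulty to be rigor in Step 3. The density $h$ need be neither bounded nor boundedly invertible, so the formal manipulation above must be justified. I would first try to rewrite $\Psi$ via the modular group $\sigma^{\widetilde{\varphi}}_{-i/2}$, which leaves $\mathcal{N}$ globally invariant because $\widetilde{\varphi}=\varphi|_{\mathcal{N}}\circ\mathcal{E}_{\mathcal{N}}$ (Takesaki). I would then verify bimodularity on the dense set of $\sigma^{\widetilde{\varphi}}$-analytic elements of $\mathcal{N}$ and extend by normality. A secondary point is to check that the version of the equality theorem being used applies to von Neumann algebras in our generality, with faithful normal states and a unital Schwarz map; here finite index, via the Pimsner--Popa inequality, guarantees that $\varphi\ll\widetilde{\varphi}$ in the required sense.
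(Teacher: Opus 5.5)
Your proposal is correct and follows the paper's proof essentially step for step. It uses the invariance $\widetilde{\varphi}\Phi=\widetilde{\varphi}$ (from $\mathcal{E}_{\mathcal{N}}\Phi=\mathcal{E}_{\mathcal{N}}$ by irreducibility), monotonicity of $H_\beta$ under $\Phi$ and $\Psi$ for necessity, and, for sufficiency, Petz's equality theorem together with the fact that $\Phi^*_{\widetilde{\varphi}}$ is an $\mathcal{N}$-bimodule channel (the paper's Lemma~\ref{lem:adjont bimodule map}).

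The only difference is how that last fact is shown. The paper uses $\sigma^{\widetilde{\varphi}}_t(\mathcal{N})\subseteq\mathcal{N}$, Petz's fixed-point criterion and the multiplicative-domain characterization of bimodule maps. Your density formula, read in $L^1(\mathcal{M},\tau)$ as $h^{1/2}\Psi(x)h^{1/2}=\Phi^{\dagger}(h^{1/2}xh^{1/2})$, already gives $\Psi(n)=n$ for all $n\in\mathcal{N}$, since $h^{1/2}nh^{1/2}\in L^1(\mathcal{N})$ is fixed by $\Phi^{\dagger}$. Choi's multiplicative-domain theorem then yields bimodularity, so you do not need the analytic-element argument.
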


Thus, Theorem~\ref{thm:main theorem 1} realizes the Petz recoverability principle in the presence of conserved quantities and provides an information-theoretic criterion for phase equivalence. 

Our second result introduces an {\sl operator algebra symmetry} (OA symmetry). In contrast to conventional symmetry, which is described by the action of a group on the observable algebra, our symmetry is encoded intrinsically by the operator-algebraic structure associated with the inclusion $\mathcal{N}\subseteq\mathcal{M}$. 
For a normal state $\varphi$, we associate an intermediate subfactor

$$
    \mathcal{N}\subseteq\mathcal{P}_{\varphi}\subseteq\mathcal{M},
$$
defined in terms of the conditional expectations that preserve $\varphi$. More precisely, we set
\begin{align*}
\mathcal{P}_{\varphi}
:=
\bigcap_{\substack{
\mathcal{N}\subseteq\mathcal{P}\subseteq\mathcal{M}\
\varphi\mathcal{E}_{\mathcal{P}}=\varphi
}}
\mathcal{P},
\end{align*}
where $\mathcal{E}_{\mathcal{P}}:\mathcal{M}\to\mathcal{P}$ denotes the conditional expectation onto the intermediate subfactor $\mathcal{P}$.
The resulting OA symmetry provides an operator-algebraic generalization of group symmetry; in particular, the conventional group-symmetry framework is recovered as a special case, see Remark~\ref{rem:group symmetry}.

We next relate phase equivalence to the operator algebra symmetry associated with a state. To compare the operator algebra symmetries associated with different states, we introduce the following equivalence relation. We say that $P_\varphi$ and $P_\psi$ are equivalent if there exists a bishift of biprojection $V$ such that
$$
    V^*V=P_\varphi
    \qquad\text{and}\qquad
    VV^*=P_\psi.
$$

The connection between phase equivalence and OA symmetry is then captured by the following result.
\begin{theorem}[OA symmetry and phase equivalence, see Theorem \ref{thm:state equivalent}]
\label{thm:main theorem 2}
Suppose $\mathcal{N}\subseteq\mathcal{M}$ is an irreducible finite inclusion of factors, and let $\varphi,\psi$ be normal states. If $\varphi$ is equivalent to $\psi$, then
    \begin{align*} \mathcal{P}_{\varphi} \text{ is equivalent to } \mathcal{P}_{\psi}. \end{align*}
\end{theorem}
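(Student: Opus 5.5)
We work in the Fourier-analytic picture of the inclusion $\mathcal{N}\subseteq\mathcal{M}$. Bimodule quantum channels $\Phi$ correspond, via the Fourier transform $\mathfrak{F}$, to positive elements $\widehat{\Phi}$ in the relative commutant $\mathcal{N}'\cap\mathcal{M}_1$, with trace normalization for unitality. Intermediate subfactors $\mathcal{N}\subseteq\mathcal{P}\subseteq\mathcal{M}$ correspond to biprojections $P\in\mathcal{N}'\cap\mathcal{M}_1$ (Bisch). The condition $\varphi\mathcal{E}_{\mathcal{P}}=\varphi$ says that $\varphi$ is invariant under the bimodule channel whose Fourier transform is the normalized biprojection.

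\textbf{Step 1.} Show that the set of bimodule channels fixing $\varphi$ is a convex semigroup under composition, corresponding to convolution. Show further that the family of intermediate $\mathcal{P}$ with $\varphi\mathcal{E}_{\mathcal{P}}=\varphi$ is closed under the operation $\mathcal{P}\wedge\mathcal{Q}$: since $\mathcal{E}_{\mathcal{P}}\mathcal{E}_{\mathcal{Q}}$ iterates converge in norm to $\mathcal{E}_{\mathcal{P}\cap\mathcal{Q}}$ (finite dimensionality of $\mathcal{N}'\cap\mathcal{M}_1$), invariance passes to the limit. Hence $\mathcal{P}_\varphi$ is itself $\varphi$-preserving, and its biprojection $P_\varphi$ is the minimal biprojection among those with this property. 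Equivalently, $P_\varphi$ is the biprojection generated by the supports of the Fourier transforms of channels fixing $\varphi$: the Cesàro averages of any such channel converge to a conditional expectation onto a $\varphi$-preserving intermediate subfactor.

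\textbf{Step 2.} Let $\Phi,\Psi$ be bimodule channels with $\varphi\Phi=\psi$ and $\psi\Psi=\varphi$. Then $\Phi\Psi$ fixes $\varphi$ and $\Psi\Phi$ fixes $\psi$. We relate the symmetries by conjugation: if $\psi\mathcal{E}_{\mathcal{Q}}=\psi$, then $\Phi\mathcal{E}_{\mathcal{Q}}\Psi$ fixes $\varphi$. Taking Cesàro limits, we obtain a $\varphi$-preserving conditional expectation built from $\mathcal{E}_{\mathcal{Q}}$ transported through $\Phi$ and $\Psi$. In Fourier terms, this is the convolution $\widehat{\Phi}*Q*\widehat{\Psi}$.

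\textbf{Step 3.} Construct the bishift $V$. Consider $\widehat{\Phi}$ compressed between the two biprojections, $X=P_\psi*\widehat{\Phi}*P_\varphi$ (suitably normalized). Using the exchange relations and the fact that $P_\varphi*X=X$ and $X*P_\psi=X$ up to scalars, we show that the range projection of $X$ is a bishift of a biprojection. Indeed, an element that is left- and right-invariant under convolution with biprojections and has a minimal-type support is a bishift, by the characterization in quantum Fourier analysis (the extremal case of Young's/Hausdorff–Young inequality, or the support uncertainty principle). Then we take the polar decomposition, or more precisely we use that $P_\psi*\widehat{\Phi}*P_\varphi$ is, up to scalar, a partial isometry implementing the equivalence $V^*V=P_\varphi$, $VV^*=P_\psi$. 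Symmetric use of $\Psi$ gives the reverse and forces the equality of traces $\operatorname{tr}(P_\varphi)=\operatorname{tr}(P_\psi)$, i.e.\ $[\mathcal{P}_\varphi:\mathcal{N}]=[\mathcal{P}_\psi:\mathcal{N}]$.

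\textbf{Main obstacle.} Step 3 is the crux: extracting an honest bishift partial isometry from the merely positive Fourier data of $\Phi$ and $\Psi$. The plan is to first prove the index equality via minimality of $P_\varphi$ and $P_\psi$ and Step 2, which gives mutual inclusions after transport. Then we invoke the extremizer characterization of bishifts, namely equality in $\|x*y\|$ bounds, to conclude that the transported support is exactly a bishift of $P_\varphi$ with range $P_\psi$.
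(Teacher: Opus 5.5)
There is a genuine gap in Step 3, which is where the whole content of the statement lies, and the mechanism you propose there cannot work. Invariance of $X$ under convolution by the two biprojections, plus an unspecified ``minimal-type support'', does not make an element a bishift. Take $\mathcal{P}_\varphi=\mathcal{P}_\psi=\mathcal{M}$: the compression is then vacuous, and $X$ is the Fourier data of an arbitrary channel (for $\mathcal{M}^G\subseteq\mathcal{M}$, an arbitrary probability measure on $G$). Yet your Step 3 would have to conclude that $X$ comes from a unitary bishift, i.e.\ $\Phi=\alpha_g$. That conclusion is in fact true, but only because of an exact relation between $\Phi$ and $\Psi$ forced by the minimality of \emph{both} OA symmetries, and that relation never appears in your proposal.

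Your fallback, index equality from Step 2, does not bridge the gap either. Equal index is strictly weaker than equivalence: in the group case $P_H\sim P_K$ iff $H,K$ are conjugate, and $\mathbb{Z}_2\times\mathbb{Z}_2$ has non-conjugate subgroups of equal order. There are also convention slips that hide the problem. By your Step 2 convention, $P_\psi*\widehat{\Phi}*P_\varphi$ is the channel $\mathcal{E}_{\mathcal{P}_\psi}\Phi\mathcal{E}_{\mathcal{P}_\varphi}$, which need not send $\varphi$ to $\psi$; the correct compression is $\mathcal{E}_{\mathcal{P}_\varphi}\Phi\mathcal{E}_{\mathcal{P}_\psi}$. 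You then claim $P_\varphi*X=X$, which is inconsistent with that definition. Finally, in the convolution picture $X\ge 0$, so ``a partial isometry up to scalar'' only means a multiple of a projection. The relations $V^*V=P_\varphi$, $VV^*=P_\psi$ live on the other side, where $\mathcal{E}_{\mathcal{P}}$ is the biprojection $P$ itself and composition of channels is ordinary multiplication in $\mathcal{N}'\cap\mathcal{M}_1$.

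What is needed is the following.
\begin{itemize}
\item Compress both channels: $\Phi'=\mathcal{E}_{\mathcal{P}_\varphi}\Phi\mathcal{E}_{\mathcal{P}_\psi}$ and $\Psi'=\mathcal{E}_{\mathcal{P}_\psi}\Psi\mathcal{E}_{\mathcal{P}_\varphi}$ still satisfy $\varphi\Phi'=\psi$ and $\psi\Psi'=\varphi$.
\item The Ces\`aro limit of $\Phi'\Psi'$ is a conditional expectation onto an intermediate subfactor protecting $\varphi$. It therefore contains $\mathcal{P}_\varphi$ by minimality, and it is contained in $\mathcal{P}_\varphi$ because $\Phi'\Psi'$ takes values there. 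So it equals $\mathcal{E}_{\mathcal{P}_\varphi}$, and likewise the Ces\`aro limit of $\Psi'\Phi'$ is $\mathcal{E}_{\mathcal{P}_\psi}$.
\item Inserting the latter between $\Phi'$ and $\Psi'$ gives $\Phi'\Psi'=\Phi'\mathcal{E}_{\mathcal{P}_\psi}\Psi'=\mathcal{E}_{\mathcal{P}_\varphi}$. In $\mathcal{N}'\cap\mathcal{M}_1$ this reads $\Phi'\Psi'=P_\varphi$, and symmetrically $\Psi'\Phi'=P_\psi$.
\item Channel elements are contractions there ($\|\Phi\|_\infty\le 1$ by Hausdorff--Young). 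Hence $P_\varphi=\Phi'\Psi'\Psi'^*\Phi'^*\le\Phi'\Phi'^*\le P_\varphi$, and similarly $\Psi'^*\Psi'=P_\varphi$. It follows that $(\Phi'-\Psi'^*)(\Phi'-\Psi'^*)^*=0$.
\item So $V:=\Phi'=\Psi'^*$ is a partial isometry with $VV^*=P_\varphi$ and $V^*V=\Psi'\Phi'=P_\psi$.
\end{itemize}
Only at this point does your final step apply. $V$ is the element of a bimodule quantum channel, so $\mathfrak{F}(V)\ge 0$ and $\|\mathfrak{F}(V)\|_1=[\mathcal{M}:\mathcal{N}]^{1/2}=[\mathcal{M}:\mathcal{N}]^{1/2}\|V\|_\infty$. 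This is the equality case of Hausdorff--Young, so Theorem~\ref{thm:bishift} makes $V$ a bishift of a biprojection.
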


Theorem~\ref{thm:main theorem 2} shows that OA symmetry is an invariant of the phase. Equivalently, its contrapositive gives a sufficient mechanism for detecting phase transitions:

$$
\boxed{
\text{OA symmetry breaking}
    \quad\Longrightarrow\quad
    \text{phase transition}.
}
$$

In this sense, OA symmetry provides an operator-algebraic extension of the symmetry-breaking mechanism in the Landau paradigm. We emphasize, however, that Theorem~\ref{thm:main theorem 2} does not imply the converse: a phase transition need not be accompanied by OA symmetry breaking. Thus, OA symmetry should be viewed as a sufficient, rather than necessary, mechanism for detecting phase transitions.

While Theorems~\ref{thm:main theorem 1} and \ref{thm:main theorem 2} provide information-theoretic and symmetry-based descriptions of phase structure, our main quantitative result concerns the size of a phase. In the finite-index setting, the operator-algebraic structure imposes strong finiteness constraints. This leads naturally to the following question:

$$
\textit{How large can a phase be?}
$$

The finite-index theory of subfactors, together with quantum Fourier analysis, provides the two structural ingredients needed to address this question. On the one hand, the lattice $\mathcal{L}(\mathcal{N}\subseteq\mathcal{M})$ of intermediate subfactors is finite \cite{Wat96}, and the quantitative estimate in \cite[Theorem~1.2]{BDLR19} gives a bound depending only on the Jones index. More recently, Azzouz, Ghosh, and Palcoux \cite[Corollary 1.5]{AGP26} obtained the improved estimate

$$
 \#\mathcal{L}(\mathcal{N}\subseteq\mathcal{M})
 \leq C
 :=
 2^{\dim_{\mathbb{C}}(\mathcal{N}'\cap\mathcal{M}_1)-1}
 \leq
 2^{\lfloor [\mathcal{M}:\mathcal{N}] \rfloor-1}.
$$

On the other hand, quantum Fourier analysis provides the additional structure needed to relate phases to intermediate subfactors. In particular, the properties of bishifts of biprojections allow us to control the cardinality of a phase in terms of the lattice of intermediate subfactors. Combining these two ingredients, we obtain the following main quantitative result.

\begin{theorem}[Finiteness of phases, see Theorem \ref{thm:phase finite upper bound}]
\label{thm:main theorem 3}
Suppose $\mathcal{N}\subseteq\mathcal{M}$ is an irreducible finite inclusion of factors and $\varphi$ is a normal state. Let $[\varphi]$ be the phase containing $\varphi$. Then
\begin{align*}
\#[\varphi]
\leq
[\mathcal{M}:\mathcal{N}]C.
\end{align*}
\end{theorem}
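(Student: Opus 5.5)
We plan to bound $\#[\varphi]$ by a map from the phase into a finite set built from intermediate subfactors and bishifts. Take any $\psi\in[\varphi]$. By Theorem~\ref{thm:main theorem 2}, $\mathcal{P}_\psi$ is equivalent to $\mathcal{P}_\varphi$, so $\mathcal{P}_\psi$ is one of at most $C$ intermediate subfactors in $\mathcal{L}(\mathcal{N}\subseteq\mathcal{M})$. It therefore suffices to show that for each fixed intermediate subfactor $\mathcal{P}$, the states $\psi\in[\varphi]$ with $\mathcal{P}_\psi=\mathcal{P}$ number at most $[\mathcal{M}:\mathcal{N}]$.

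\textbf{Step 1: states in the phase are determined on $\mathcal{P}_\psi$.} For $\psi$ with $\mathcal{P}_\psi=\mathcal{P}$ we have $\psi=\psi\mathcal{E}_{\mathcal{P}}$. The intersection of intermediate subfactors fixed by $\psi$ is again fixed, since $\mathcal{E}_{\mathcal{P}\cap\mathcal{Q}}$ arises from products of commuting-square-type expectations, or equivalently from $P_{\mathcal{P}}\wedge P_{\mathcal{Q}}$ in the Fourier picture. Hence $\psi$ is determined by $\psi|_{\mathcal{P}}$. Next, write $\psi=\varphi\Phi$ for a bimodule channel $\Phi$. By the structure theory of $\mathcal{N}$-bimodule quantum channels (the paper's quantum Fourier analysis), $\Phi$ corresponds to a positive element $\widehat{\Phi}$ of $\mathcal{N}'\cap\mathcal{M}_1$. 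The mutual-inverse condition $\varphi\Phi=\psi$, $\psi\Psi=\varphi$ forces $\Phi$ to restrict to an isometric, hence invertible, correspondence between $\mathcal{P}_\varphi$ and $\mathcal{P}_\psi$. That restriction is implemented by a bishift of biprojection $V$ with $V^*V=P_\varphi$ and $VV^*=P_\psi$, so $\psi$ is determined by $\varphi$ together with $V$. This is essentially the mechanism in the proof of Theorem~\ref{thm:main theorem 2}, which I will reuse.

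\textbf{Step 2: count bishifts.} For a fixed pair of biprojections $P_\varphi$ and $P$, we must count the distinct states $\varphi\Phi_V$ obtained from bishifts $V$ with $V^*V=P_\varphi$ and $VV^*=P$. A bishift of a biprojection has the form $V=(x\,P_\varphi)$ convolved or multiplied with a group-like or shift element. Its range projection $VV^*$ is a shift $\mathcal{F}$-image of $P$, and the action on states depends only on the \emph{right shift} datum: a minimal projection in the appropriate relative commutant, up to the $\mathcal{N}$-unitary ambiguity that acts trivially by irreducibility. The number of mutually orthogonal such shifts is bounded by trace considerations. Each shift has trace $\operatorname{tr}(P)\geq [\mathcal{M}:\mathcal{N}]^{-1}$, and distinct states give orthogonal shifts whose sum is at most $1$. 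This yields at most $[\mathcal{M}:\mathcal{N}]$ possibilities. Combining this with the at most $C$ choices of $\mathcal{P}$ gives $\#[\varphi]\leq [\mathcal{M}:\mathcal{N}]C$.

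\textbf{Main obstacle.} The hard step is Step 2: showing that distinct states in the phase with the same symmetry subfactor really come from \emph{orthogonal} (or otherwise trace-separated) shifts of $P$, so that the count is controlled by $1/\operatorname{tr}(P)\leq[\mathcal{M}:\mathcal{N}]$. My first attempt would use the Fourier-side characterization of bishifts, $V=\mathcal{F}^{-1}(\text{shift of }\mathcal{F}(P))$, and check that two bishifts giving the same state differ by an element fixing $P$. That would identify the states with cosets in a finite quotient whose size is at most the index.
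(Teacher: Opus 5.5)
\textbf{There is a genuine gap in Step~2.} Your reduction is the same as the paper's. By Theorem~\ref{thm:state equivalent}, each $\psi\in[\varphi]$ equals $\varphi\Phi_V$ for an $\mathfrak{F}$-positive bishift $V$ with $\|\mathfrak{F}(V)\|_1=[\mathcal{M}:\mathcal{N}]^{1/2}$, source $P_\varphi$ and range $P_\psi$. There are at most $C$ possible ranges. Step~2 is exactly where the factor $[\mathcal{M}:\mathcal{N}]$ must come from, and you leave it as a heuristic. As written, the orthogonality argument has nothing to act on. For a fixed pair $(P_\varphi,P)$, every $V$ you are counting has the \emph{same} range projection $VV^*=P$, so these projections are not orthogonal. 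The ``right shift datum modulo an $\mathcal{N}$-unitary ambiguity'' is never defined. It is also never shown to separate distinct $V$'s by orthogonal projections of trace at least $[\mathcal{M}:\mathcal{N}]^{-1}$. So no bound on the fibres has been proved.

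\textbf{The missing idea} is to reduce each fibre to the self-bishifts of $P_\varphi$, and to count those via group-like projections. Fix one $W$ in the fibre over $Q$, so $W^*W=P_\varphi$ and $WW^*=Q$. For any other $V$ in that fibre, set $U=W^*V$. Then $U^*U=UU^*=P_\varphi$. By the quantum Schur product theorem, $U$ is $\mathfrak{F}$-positive with $\|\mathfrak{F}(U)\|_1=[\mathcal{M}:\mathcal{N}]^{1/2}$, hence a bishift by Theorem~\ref{thm:bishift}. Since $WU=QV=V$, the map $V\mapsto W^*V$ is a bijection onto the set $\mathfrak{S}_2$ of such $U$.

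Each $U\in\mathfrak{S}_2$ is a unitary in the reduced planar algebra of $\mathcal{N}\subseteq\mathcal{P}_\varphi$. The Donoho--Stark equality $\mathcal{S}(U)\mathcal{S}(\mathfrak{F}(U))=[\mathcal{P}_\varphi:\mathcal{N}]$ then forces its Fourier transform for $\mathcal{N}\subseteq\mathcal{P}_\varphi$ to be a multiple of a trace-one, i.e.\ group-like, projection. Distinct group-like projections are orthogonal minimal central projections, so there are at most $[\mathcal{P}_\varphi:\mathcal{N}]\leq[\mathcal{M}:\mathcal{N}]$ of them. This is the paper's route.

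\textbf{Salvaging your orthogonality idea.} It can be made to work, but only on the Fourier side. For $V\neq W$ in the same fibre, $W^*V\neq P_\varphi$, so it corresponds to a nontrivial group-like projection. Hence $tr_2(W^*V)=\langle\mathfrak{F}(V),\mathfrak{F}(W)\rangle=0$. The supports $\mathcal{R}(\mathfrak{F}(V))$ are therefore mutually orthogonal, each of trace $[\mathcal{M}:\mathcal{P}_\varphi]$. This again uses the same group-like ingredient, which is precisely what your proposal lacks.
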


Theorem~\ref{thm:main theorem 3} is the principal quantitative consequence of our framework. It shows that the finite-index condition imposes a universal constraint on the cardinality of a phase. In particular, although the space of normal states is generally infinite-dimensional, the cardinality of each phase is controlled by the finite-index operator-algebraic structure. The resulting bound depends explicitly on the Jones index and on the size of the lattice of intermediate subfactors.

The bound in Theorem~\ref{thm:main theorem 3} can be compared with the conventional group-symmetry setting. Let $G$ be a finite group acting outerly on $\mathcal{N}$ and consider the associated group subfactor

$$
    \mathcal{N}\subseteq\mathcal{N}\rtimes G.
$$

In this case,

$$
    [\mathcal{N}\rtimes G:\mathcal{N}]=|G|,
$$

and the corresponding phase cardinality satisfies the sharper bound

$$
    \#[\varphi]\leq |G|.
$$

This example suggests that the general bound in Theorem~\ref{thm:main theorem 3} may admit a substantial strengthening. It motivates the following conjecture.

\begin{conjecture}
Suppose $\mathcal{N}\subseteq\mathcal{M}$ is an irreducible finite inclusion of factors and $\varphi$ is a normal state. Let $[\varphi]$ be the phase containing $\varphi$. Then
\begin{align*}
\#[\varphi]\leq[\mathcal{M}:\mathcal{N}].
\end{align*}
\end{conjecture}

The group-subfactor example illustrates that our operator-algebraic framework contains the conventional group-symmetry setting while allowing for a broader class of symmetry structures arising intrinsically from subfactor inclusions. Taken together, our results provide three complementary perspectives on quantum phase structure:

$$
\text{entropy and recoverability},
\qquad
\text{operator-algebraic symmetry},
\qquad
\text{quantitative finiteness of phases}.
$$

The first provides an information-theoretic criterion for phase equivalence, the second identifies OA symmetry breaking as a sufficient mechanism for phase transitions, and the third establishes a quantitative finiteness principle for phases. Among these, Theorem~\ref{thm:main theorem 3} provides the principal quantitative contribution of the paper.

The paper is organized as follows. In \S\ref{sec:preliminary}, we present the necessary preliminaries from operator algebras, subfactor theory, and quantum Fourier analysis. In \S\ref{sec:phase}, we define phases in terms of $\mathcal{N}$-bimodule quantum channels and establish a relative quasi-entropic criterion for phase equivalence using the Petz recovery map, thereby proving Theorem~\ref{thm:main theorem 1}. In \S\ref{sec:symmetries}, we introduce OA symmetries and define an equivalence relation among OA symmetries via bishifts of biprojections. Finally, in \S\ref{sec:finiteness of phases}, we show that phase equivalence implies OA symmetry equivalence, yielding the symmetry-breaking criterion for phase transitions in Theorem~\ref{thm:main theorem 2}. We then investigate the finiteness of phases and establish the quantitative bound in Theorem~\ref{thm:main theorem 3}.

\section{Preliminary Results}\label{sec:preliminary}
Let $\mathcal{M}$ be a von Neumann algebra and let $\mathcal{N}$ be a subalgebra of $\mathcal{M}$.
If $\mathcal{M}$ has a trivial center, then $\mathcal{M}$ is called a factor.
We call an inclusion of von Neumann algebras $\mathcal{N}\subseteq\mathcal{M}$ a {\sl finite inclusion of factors} if the index $[\mathcal{M}:\mathcal{N}]$ is finite and $\mathcal{M}$ and $\mathcal{N}$ are finite factors.
The inclusion is called irreducible if $\mathcal{N}'\cap\mathcal{M}=\mathbb{C}\mathbf{1}$.

\subsection{Fourier transform}
Let $\mathcal{N}\subseteq\mathcal{M}$ be a finite inclusion of factors and $\mathcal{E}_{\mathcal{N}}$ the  conditional expectation from $\mathcal{M}$ onto $\mathcal{N}$.
The conditional expectation $\mathcal{E}_{\mathcal{N}}$ induces a projection $e_{\mathcal{N}}$.
The von Neumann algebra generated by $\mathcal{M}$ and $e_{\mathcal{N}}$ is a factor again, denoted by $\mathcal{M}_1$.
Iterating the process, we obtain the Jones tower as follows:
\begin{align*}
    \mathcal{N}\subset\mathcal{M}\subset\mathcal{M}_1\subset\mathcal{M}_2\subset\cdots
\end{align*}
The relative commutants $\mathcal{N}'\cap\mathcal{M}_k$ and $\mathcal{M}'\cap\mathcal{M}_k$ turn out to be critical in characterizing the structure of the subfactors. 
The following
\begin{align*}
\mathcal{N}'\cap\mathcal{N}\quad\subset\quad &\mathcal{N}'\cap\mathcal{M}\quad\subset\quad {\mathcal{N}'\cap\mathcal{M}_1}\quad\subset\quad \mathcal{N}'\cap\mathcal{M}_2\quad\subset\quad\cdots\\
&\quad\ \cup\quad\quad\quad\quad\quad\quad\cup
\quad\quad\quad\quad\quad \quad\quad \cup
\\
 &\mathcal{M}'\cap\mathcal{M}\quad\subset\quad \mathcal{M}'\cap\mathcal{M}_1\quad\subset\quad {\mathcal{M}'\cap\mathcal{M}_2}\quad\subset\quad\cdots
\end{align*}
is called the standard invariant of $\mathcal{N}\subset\mathcal{M}$.
There are three axiomatizations to study standard invariant: Jones planar algebras, Popa’s $\lambda$-lattice and Ocneanu’s paragroup.
Jones planar algebra is a powerful and handy computation tool. 
We denote by $\mathscr{P}^{\mathcal{N}\subset\mathcal{M}}$ the associated subfactor planar algebra and $tr_k$ the unnormalized trace on $\mathcal{N}'\cap\mathcal{M}_{k-1}$ or $\mathcal{M}'\cap\mathcal{M}_k$.

There is a Fourier transform $\mathfrak{F}$ between the pair $\{\mathcal{N}'\cap\mathcal{M}_{k},\mathcal{M}'\cap\mathcal{M}_{k+1} \}$ for each $k\geq1$.
In particular, when $k=1$, the Fourier transform and its inverse are given by
  \begin{align*}
    \mathfrak{F}(x)=&\mu^{3/2} E_{\mathcal{M}'\cap\mathcal{M}_2}(\Delta xe_2e_1), \quad x\in \mathcal{N}'\cap \mathcal{M}_1\\
     \mathfrak{F}^{-1}(x)=& \mu^{3/2} E_{\mathcal{M}_1}(\Delta^{-1} x e_1e_2) \quad x\in \mathcal{M}'\cap \mathcal{M}_2,
     \end{align*}
     see e.g. \cite{BJ00}.
In planar algebras \cite{Jon21}, the Fourier transform $\mathfrak{F}$: $\mathscr{P}_{2,+}\to\mathscr{P}_{2,-}$ is a $90^\circ$ rotation:
\begin{align}\label{eq:Fourier transform}
\mathfrak{F}(x):=\raisebox{-0.9cm}{
\begin{tikzpicture}[scale=1.5]
\path [fill=gray!40] (-0.3, -0.4) rectangle (0.8, 0.9);
\path [fill=white] (0.35, -0.4)--(0.35, 0.5) .. controls +(0, 0.3) and +(0, 0.3) .. (0.65, 0.5)--(0.65, -0.4);
\path[fill=white] (0.15, 0.9) -- (0.15, 0) .. controls +(0, -0.3) and +(0, -0.3) .. (-0.15, 0)--(-0.15, 0.9);
\draw [blue, fill=white] (0,0) rectangle (0.5, 0.5);
\node at (0.25, 0.25) {$x$};
\draw (0.35, 0)--(0.35, -0.4) (0.15, 0.5)--(0.15, 0.9);
\draw (0.35, 0.5) .. controls +(0, 0.3) and +(0, 0.3) .. (0.65, 0.5)--(0.65, -0.4);
\draw (0.15, 0) .. controls +(0, -0.3) and +(0, -0.3) .. (-0.15, 0)--(-0.15, 0.9);
\end{tikzpicture}}
\;.
\end{align}
     
\subsection{Some properties of bimodule quantum channels}
To evolve states of matters, we need unital normal completely positive maps $\Phi:\mathcal{M}\to \mathcal{M}$ which preserve states.
To ensure that a unital normal completely positive map $\Phi$ preserves the quantities from $\mathcal{N}$, we require that $\Phi$ is a bimodule completely positive map, i.e.
\begin{align*}
    \Phi(y_1 x y_2)=y_1\Phi(x) y_2, \quad\text{for all } x\in\mathcal{M},\ y_i\in\mathcal{N},\ i=1,2.
\end{align*}
We call a unital normal completely positive map as a {\sl quantum channel} and a unital normal completely positive
bimodule map as a {\sl bimodule quantum channel}.

There is a one-to-one correspondence between bounded bimodule maps and elements in $\mathcal{N}'\cap\mathcal{M}_1$.
To describe complete positiveness, we introduce the notion of $\mathfrak{F}$-positive.
Recall that  an element $T\in\mathcal{N}'\cap\mathcal{M}_1$ is $\mathfrak{F}$-positive if $\mathfrak{F}(T)$ is positive (see \cite[Definition 3.2]{HJLW25b}). 
A bimodule quantum channel $\Phi$ can be viewed as an $\mathfrak{F}$-positive element $T$ in $\mathcal{N}'\cap\mathcal{M}_1$.
We will denote $\mathfrak{F}(T)$ by $\mathfrak{F}(\Phi)$ and $T$ by $\Phi$ if there is no confusion. 
For each $\mathfrak{F}$-positive element $V$ in $\mathcal{N}'\cap\mathcal{M}_1$ with $\|\mathfrak{F}(V)\|_1=[\mathcal{M}:\mathcal{N}]^{1/2}$, we denoted by $\Phi_V$ the bimodule quantum channel associated to $V$, where $\|\cdot\|_1$ is the
$1$-norm induced by $tr_2$.
More precisely,
\begin{align*}
    \Phi_V(x)=[\mathcal{M}:\mathcal{N}]\mathcal{E}_{\mathcal{M}}(Vxe_{\mathcal{N}}), \quad x\in\mathcal{M},
\end{align*}
where $\mathcal{E}_{\mathcal{M}}$ is the conditional expectation from $\mathcal{M}_1$ onto $\mathcal{M}$, see \cite[Proposition 2.3]{HJLW25}.
Thanks to the pictorial characterization of subfactors, there are horizontal multiplication (the convolution) and the vertical multiplication (the usual multiplication) on  $\mathcal{N}'\cap\mathcal{M}_1, \mathcal{M}'\cap\mathcal{M}_2$.
The composition of bimodule quantum channels is equivalent to the convolution of the corresponding positive elements in $\mathcal{M}'\cap\mathcal{M}_2$.

We recall some results for bimodule quantum channels.
\begin{proposition}[Theorem 4.4 in \cite{HJLW25}]\label{prop:conditional expectation}
    Suppose that $\mathcal{N}\subseteq\mathcal{M}$ is an irreducible finite inclusion of factors and $\Phi$ is a bimodule quantum channel. 
    Then there exists an intermediate subfactor $\mathcal{P}$ such that
\begin{align*}
    \mathcal{E}_{\mathcal{P}}=\lim_{n\to\infty}\frac{1}{n}\sum_{k=1}^n\Phi^k,
\end{align*}
where $ \mathcal{E}_{\mathcal{P}}$ is the conditional expectation from $\mathcal{M}$ onto $\mathcal{P}$.
\end{proposition}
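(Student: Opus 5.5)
The plan is to work inside the space $\mathcal{B}$ of bounded $\mathcal{N}$-bimodule maps on $\mathcal{M}$. By the correspondence recalled above, $\mathcal{B}$ is linearly isomorphic to $\mathcal{N}'\cap\mathcal{M}_1$. Since the index is finite, $\mathcal{N}'\cap\mathcal{M}_1$ is finite-dimensional, and hence so is $\mathcal{B}$. All norms on $\mathcal{B}$ are therefore equivalent, and convergence may be checked in any one of them, for instance the operator norm on $\mathcal{M}$.

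\textbf{Step 1: existence of the limit.} Every power $\Phi^k$ is again a bimodule quantum channel, so $\|\Phi^k\|\leq 1$. Thus $\{\Phi^k\}$ is a bounded family in the finite-dimensional algebra $\mathcal{B}$, with composition as product. I would apply the finite-dimensional mean ergodic theorem. Power-boundedness forces every eigenvalue of $\Phi$ (as an operator on $\mathcal{B}$ via left composition, or directly via its Jordan form) to have modulus $\leq 1$. It also forces the unimodular eigenvalues to be semisimple. Consequently the Ces\`aro means $\frac1n\sum_{k=1}^n\Phi^k$ converge in norm to the idempotent $\mathcal{E}$ projecting onto $\ker(\Phi-\mathrm{id})$ along $\overline{\mathrm{ran}}(\Phi-\mathrm{id})$. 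As a norm limit of bimodule quantum channels, $\mathcal{E}$ is a unital normal completely positive bimodule map. It also satisfies $\mathcal{E}^2=\mathcal{E}$ and $\Phi\mathcal{E}=\mathcal{E}\Phi=\mathcal{E}$. Normality is automatic here because $\mathcal{E}$ lies in the finite-dimensional space $\mathcal{B}$, all of whose elements come from $\mathcal{N}'\cap\mathcal{M}_1$ through the formula for $\Phi_V$.

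\textbf{Step 2: trace preservation.} Let $\tau$ be the trace on $\mathcal{M}$. For a unitary $u\in\mathcal{N}$ the bimodule property gives $\tau\Phi(uxu^*)=\tau(u\Phi(x)u^*)=\tau\Phi(x)$. Hence the density of the normal state $\tau\Phi$ with respect to $\tau$ commutes with $\mathcal{N}$. By irreducibility this density is a scalar, so $\tau\Phi=\tau$, and then also $\tau\mathcal{E}=\tau$.

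\textbf{Step 3: identifying $\mathcal{E}$ as a conditional expectation.} Set $\mathcal{P}=\mathcal{E}(\mathcal{M})=\{x:\mathcal{E}(x)=x\}$; it contains $\mathcal{N}$ because $\mathcal{E}$ is unital and bimodular. For $x\in\mathcal{P}$, the Kadison--Schwarz inequality gives $\mathcal{E}(x^*x)-x^*x\geq 0$, and this element has trace $\tau(x^*x)-\tau(x^*x)=0$. Faithfulness of $\tau$ then yields $\mathcal{E}(x^*x)=x^*x$. So $\mathcal{P}$ lies in the multiplicative domain of $\mathcal{E}$, which makes $\mathcal{P}$ a $*$-algebra and $\mathcal{E}$ a $\mathcal{P}$-bimodule map. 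Moreover $\mathcal{P}$ is weakly closed, being the range of a normal idempotent. A trace-preserving $\mathcal{P}$-bimodular unital positive idempotent onto $\mathcal{P}$ is the unique trace-preserving conditional expectation, so $\mathcal{E}=\mathcal{E}_{\mathcal{P}}$. Finally, $\mathcal{P}'\cap\mathcal{P}\subseteq\mathcal{N}'\cap\mathcal{M}=\mathbb{C}\mathbf 1$, so $\mathcal{P}$ is an intermediate subfactor.

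I expect the main obstacle to be Step 1: justifying that the Ces\`aro limit exists and is normal. This rests on making the finite-dimensionality of $\mathcal{B}$ rigorous through the identification with $\mathcal{N}'\cap\mathcal{M}_1$, so that weak limits are not needed. Step 3 is then the standard Kadison--Schwarz argument.
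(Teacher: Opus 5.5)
Your proof is correct, but it follows a different route from the one behind the paper's statement. The paper does not prove the proposition; it imports it from \cite[Theorem 4.4]{HJLW25}. The way it is used later points to the cited argument working entirely inside $\mathcal{N}'\cap\mathcal{M}_1$: in the proof of Theorem~\ref{thm:state equivalent}, $P_\varphi$ is read off as the spectral projection of $\Phi\Psi$ at eigenvalue $1$. In that argument, $\Phi$ is viewed as a contraction on $L^2(\mathcal{M})$, the Ces\`aro limit is its eigenvalue-$1$ spectral projection, and Fourier-analytic tools ($\mathfrak{F}$-positivity) show this projection is a biprojection. The biprojection/intermediate-subfactor correspondence then produces $\mathcal{P}$.

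You use finite-dimensionality only to obtain norm convergence, and then identify the limit by classical means:
\begin{enumerate}
\item irreducibility forces $\tau\Phi=\tau$;
\item Kadison--Schwarz together with faithfulness of $\tau$ puts the fixed-point space in the multiplicative domain;
\item uniqueness of the trace-preserving conditional expectation finishes the argument.
\end{enumerate}
There is one harmless elision in Step 3. To place $x\in\mathcal{P}$ in the multiplicative domain you also need $\mathcal{E}(xx^*)=xx^*$. This follows by applying your argument to $x^*=\mathcal{E}(x)^*\in\mathcal{P}$.

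What each route buys: yours is self-contained and avoids planar-algebra and biprojection machinery. The Fourier route gives directly the extra fact the paper actually uses: as an element of $\mathcal{N}'\cap\mathcal{M}_1$, the limit is the eigenvalue-$1$ spectral projection of $\Phi$ and is a biprojection. You can recover this with one more line. By Step 2 and Kadison--Schwarz, $\|\Phi(x)\|_2^2\leq\tau(\Phi(x^*x))=\|x\|_2^2$, so $\Phi$ is a contraction on $L^2(\mathcal{M})$. The Ces\`aro limit of a contraction is the orthogonal projection onto its fixed vectors, here $L^2(\mathcal{P})$. That projection is the Jones projection $e_{\mathcal{P}}$, which is the biprojection $P$.
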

Suppose $\Phi$ is a bimodule quantum channel. 
The maps $\Phi\Phi^*$ and $\Phi^*\Phi$ are bimodule quantum channels again, where $\Phi^*$ is to take the adjoint of the corresponding element in $\mathcal{N}'\cap\mathcal{M}_1$. 
By Proposition \ref{prop:conditional expectation},  there exist intermediate subfactors $\mathcal{P}$ and $\mathcal{Q}$ such that
\begin{align}\label{eq:support}
    \mathcal{E}_{\mathcal{P}}=\lim_{n\to\infty}\frac{1}{n}\sum_{k=1}^n(\Phi\Phi^*)^k,\quad \mathcal{E}_{\mathcal{Q}}= \lim_{n\to\infty}\frac{1}{n}\sum_{k=1}^n(\Phi^*\Phi)^k.
\end{align}
It is clear that these two conditional expectations have the following relations:
\begin{align}\label{eq:relation between two conditional expectation}
    \mathcal{E}_{\mathcal{P}}=\Phi \mathcal{E}_{\mathcal{Q}}\Phi^*,\quad \mathcal{E}_{\mathcal{Q}}=\Phi^*\mathcal{E}_{\mathcal{P}}\Phi.
\end{align}
\subsection{Bishifts of biprojections}
A projection $P$ in $\mathcal{N}'\cap\mathcal{M}_1$ is a biprojection \cite{Bis94,BJ97} if $\mathfrak{F}(P)$ is a multiple of a projection.
There is a one-to-one correspondence between intermediate subfactors $\mathcal{P}$ with minimal conditional expectations $\mathcal{E}_{\mathcal{P}}$ and biprojections $P$. 
The equivalence of projections is described by partial isometries. 
However, the equivalence of biprojections is described by extremal bipartial isometries, i.e. bishifts of biprojections.
This helps us to define the equivalence of intermediate subfactors (i.e. the symmetries). 

Due to the significance of bishifts of biprojections in characterizing the phase, we recall their definitions (see also \cite[Definition 6.6]{JLW16}).
Suppose $P$ is a biprojection in $\mathcal{N}'\cap\mathcal{M}_1$, we denote by $\widetilde{P}$ the range projection
of $\mathfrak{F}(P)\in\mathcal{M}'\cap\mathcal{M}_2$.
A nonzero element $x$ in $\mathcal{N}'\cap\mathcal{M}_1$ is said to be a bishift of a biprojection $P$ if there
exist a right shift $P_g$ of the biprojection $P$ and a right shift $\widetilde{P}_h$ of the biprojection $\widetilde{P}$
and an element $y\in \mathcal{N}'\cap\mathcal{M}_1$ such that $x=\mathfrak{F}(\widetilde{P}_h)\ast (yP_g)$.
There are 8 different ways to construct bishifts of biprojections.

Bishifts of biprojections can be characterized as the extremizers of quantum uncertainty principles.
Recall that an element $x$ in $\mathcal{N}'\cap\mathcal{M}_1$ or $\mathcal{M}'\cap\mathcal{M}_2$ is called extremal if $x$ is an extremizer of the Hausdorff-Young inequality:
\begin{align*}
    \|\mathfrak{F}(x)\|_{\infty}=\frac{\|x\|_1}{[\mathcal{M}:\mathcal{N}]^{1/2}}.
\end{align*}
See \cite[Definition 6.1]{JLW16}.
\begin{theorem}(See \cite[Main Theorem 2]{JLW16})\label{thm:bishift}
Suppose $\mathcal{N}\subseteq\mathcal{M}$ is an irreducible finite inclusion of factors.
For a non-zero element $x\in\mathcal{N}'\cap\mathcal{M}_1$, the following statements are equivalent:
\begin{enumerate}
    \item $\mathcal{S}(x)\mathcal{S}(\mathfrak{F}(x))=[\mathcal{M}:\mathcal{N}]$, where $\mathcal{S}(x)$ is the support of the range projection of $x$.
    \item $H(|x|^2)+H(|\mathfrak{F}(x)|^2)=\|x\|_2^2(\log[\mathcal{M}:\mathcal{N}]-4\log\|x\|_2)$, where $H(|x|^2)$ is the von Neumann entropy of $|x|^2$.
    \item $x$ is an extremal bipartial isometry.
    \item  $x$ is a partial isometry and $\mathfrak{F}^{-1}(x)$ is extremal.
\item  $x$ is a bishift of a biprojection.
\end{enumerate}
\end{theorem}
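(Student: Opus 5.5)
The plan is to prove the theorem as a cycle of implications organised around the quantum Donoho--Stark uncertainty principle. Write $\delta=[\mathcal{M}:\mathcal{N}]^{1/2}$. The basic inequality is
\begin{align*}
\|\mathfrak{F}(x)\|_2^2\leq \mathcal{S}(\mathfrak{F}(x))\,\|\mathfrak{F}(x)\|_\infty^2\leq \mathcal{S}(\mathfrak{F}(x))\,\frac{\|x\|_1^2}{\delta^2}\leq \mathcal{S}(\mathfrak{F}(x))\,\mathcal{S}(x)\,\frac{\|x\|_2^2}{\delta^2}.
\end{align*}
The three steps are, in order, Hölder, the Hausdorff--Young inequality, and Cauchy--Schwarz. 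Since $\mathfrak{F}$ is unitary for the $2$-norms, this gives $\mathcal{S}(x)\mathcal{S}(\mathfrak{F}(x))\geq \delta^2=[\mathcal{M}:\mathcal{N}]$.

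\textbf{Equality case, (1)$\Leftrightarrow$(3)$\Leftrightarrow$(4).} Equality in (1) forces each of the three inequalities to be tight.
\begin{enumerate}
\item Tightness in Cauchy--Schwarz means $|x|$ is a constant multiple of its support projection, so $x$ is a multiple of a partial isometry.
\item Tightness in Hölder means the same for $\mathfrak{F}(x)$.
\item Tightness in Hausdorff--Young means $x$ is extremal.
\end{enumerate}
This gives (1)$\Rightarrow$(3) and (4), after normalising the scalar. Conversely, if $x$ is an extremal bipartial isometry, the three inequalities are equalities, so (1) holds. The same reasoning applied to $\mathfrak{F}^{-1}$ gives (4)$\Leftrightarrow$(3).

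\textbf{Entropic statement, (1)$\Leftrightarrow$(2).} I would first prove the Hirschman--Beckner type inequality
\begin{align*}
H(|x|^2)+H(|\mathfrak{F}(x)|^2)\geq \|x\|_2^2\left(\log[\mathcal{M}:\mathcal{N}]-4\log\|x\|_2\right).
\end{align*}
To get it, differentiate the Hausdorff--Young--Riesz--Thorin family $\|\mathfrak{F}(x)\|_{p'}\leq \delta^{1-2/p}\|x\|_p$ at $p=2$, where both sides coincide. Next, I would compare with the support bound. The von Neumann entropy of a positive operator of fixed trace is at most the logarithm of its support (in normalised form), with equality exactly when the spectrum is flat. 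Combining this with the Donoho--Stark bound shows the two lower bounds match precisely when $|x|$ and $|\mathfrak{F}(x)|$ are flat and (1) holds. Hence (1)$\Leftrightarrow$(2).

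\textbf{Bishifts, (5)$\Rightarrow$(3).} This should be a direct computation. For a right shift $P_g$ of a biprojection $P$, one computes $\mathcal{S}(P_g)=\mathcal{S}(P)$. Convolution with $\mathfrak{F}(\widetilde{P}_h)$ acts on the Fourier side as multiplication by a shift of $\widetilde{P}$. Combined with the identity $\mathcal{S}(P)\mathcal{S}(\widetilde{P})=[\mathcal{M}:\mathcal{N}]$ for biprojections, these should yield (1) directly.

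\textbf{Main obstacle, (3)$\Rightarrow$(5).} Given an extremal bipartial isometry $x$, one must produce the biprojection and its shifts. My first attempt would be as follows. Let $R$ be the range projection of $x$ (or of $x^*x$). Use extremality, i.e.\ that $\mathfrak{F}(x)$ is a multiple of a partial isometry, together with the exchange relation $\mathfrak{F}(a b)=\mathfrak{F}(a)\ast\mathfrak{F}(b)$, to show that $\overline{R}\ast R$ is a multiple of a projection $P$. Then verify that $P$ is a biprojection via Bisch's characterisation: $P\ast P$ is a multiple of $P$. From this I would deduce that $R$ is a right shift $P_g$ of $P$. Running the same argument on the Fourier side gives a shift $\widetilde{P}_h$ of $\widetilde{P}$. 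Finally, $y$ is recovered as a partial isometry intertwining the two, so that $x=\mathfrak{F}(\widetilde{P}_h)\ast(yP_g)$. The delicate point is controlling the support of the convolution of two projections, mimicking the coset argument in the group case, where an equality-case support estimate for $\mathcal{S}(a\ast b)$ is the key tool.
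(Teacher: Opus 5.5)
The paper gives no proof of this statement; it is quoted from JLW16. Your scaffolding is the standard one for this result: Donoho--Stark from Plancherel, Hausdorff--Young and Cauchy--Schwarz, and Hirschman--Beckner by differentiating the Riesz--Thorin family at $p=2$. Your arguments for (1)$\Leftrightarrow$(3) and (3)$\Rightarrow$(4) are fine.

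\textbf{Main gap: (3)$\Rightarrow$(5).} This implication carries all of the structural content of the theorem, and it is not proved. You claim that extremality plus the exchange relation makes $\overline{R}\ast R$ a multiple of a projection, but you give no mechanism for this. You do not say which equality case produces it. The ``equality-case support estimate for $\mathcal{S}(a\ast b)$'' you invoke is neither proved nor available to quote in that form. The remaining steps are only announced: that $R$ is a right shift of $P$, the analogous statement on the Fourier side, and the construction of $y$ with $x=\mathfrak{F}(\widetilde{P}_h)\ast(yP_g)$.

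The converse (5)$\Rightarrow$(3) is also thinner than you suggest. The Fourier side does give $\mathcal{S}(\mathfrak{F}(x))\leq tr_2(\widetilde{P})$. But $\mathcal{S}(x)\leq tr_2(P)$ for the \emph{convolution} $x=\mathfrak{F}(\widetilde{P}_h)\ast(yP_g)$ does not follow from $\mathcal{S}(P_g)=\mathcal{S}(P)$. You must show that $x$ is supported under a shift of $P$.

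\textbf{Two ``easy'' implications are argued incorrectly.} Write $\delta=[\mathcal{M}:\mathcal{N}]^{1/2}$.

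For (2)$\Rightarrow$(1), normalise $\|x\|_2=1$. You have $\log\mathcal{S}(x)+\log\mathcal{S}(\mathfrak{F}(x))\geq H(|x|^2)+H(|\mathfrak{F}(x)|^2)\geq\log\delta^2$. If the outer terms are equal, the middle is squeezed, which gives (1)$\Rightarrow$(2). But (2) only says the middle equals the right end, and that tells you nothing about the left end. You need the equality case of Hirschman--Beckner itself. One way to get it:
\begin{enumerate}
\item Write $x=u|x|$ and $\mathfrak{F}(x)=w|\mathfrak{F}(x)|$, and take the three-lines function $F(z)=tr_2\bigl(\mathfrak{F}(u|x|^{2-z})\,|\mathfrak{F}(x)|^{2-z}w^*\bigr)$.
\item The function $h(z)=\log|F(z)|+(1-\mathrm{Re}\,z)\log\delta$ is subharmonic and $\leq 0$ on the strip $0\leq\mathrm{Re}\,z\leq 1$, with $h(1)=0$. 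Condition (2) says exactly that $\partial_{\mathrm{Re}\,z}h(1)=0$, so Hopf's lemma forces $h\equiv 0$.
\item Equality in H\"older then yields $\mathfrak{F}(u|x|^{2-s})=\delta^{s-1}w|\mathfrak{F}(x)|^{s}$ for all $s$.
\item Compare exponents in the spectral decompositions. Each spectral piece $uE_i$ of $x$ (for the eigenvalue $\lambda_i$ of $|x|$) is a bi-partial isometry with $\mathcal{S}(uE_i)\mathcal{S}(\mathfrak{F}(uE_i))=\delta^2\bigl(\lambda_i^2tr_2(E_i)\bigr)^2$.
\item Donoho--Stark gives $\lambda_i^2tr_2(E_i)\geq 1$ for each $i$. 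Since $\sum_i\lambda_i^2tr_2(E_i)=1$, there is a single eigenvalue, and (1) follows.
\end{enumerate}

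For (4)$\Rightarrow$(3), ``the same reasoning applied to $\mathfrak{F}^{-1}$'' does not work. Set $y=\mathfrak{F}^{-1}(x)$. Hypothesis (4) makes only two of your three steps tight. The Cauchy--Schwarz step needs $|y|$ to be flat, which is precisely what is unknown. A fix uses Hausdorff--Young at $1<p<2$:
\begin{enumerate}
\item Since $x$ is a partial isometry, $\|x\|_\infty=1$. Hence $\|y\|_1=\delta$, $\|y\|_2=\mathcal{S}(x)^{1/2}$ and $\|\mathfrak{F}(y)\|_{p'}=\mathcal{S}(x)^{1/p'}$.
\item The inequality $\|\mathfrak{F}(y)\|_{p'}\leq\delta^{1-2/p}\|y\|_p$ then forces equality in $\|y\|_p\leq\|y\|_1^{1-\theta}\|y\|_2^{\theta}$, where $1/p=1-\theta/2$.
\item This equality holds only if $|y|$ is a multiple of a projection.
\end{enumerate}
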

Let $G$ be a finite group and $\mathcal{N}\subseteq\mathcal{N}\rtimes G$ the group subfactor.
An element $x$ in the group algebra $\mathcal{L}(G)$ is a bishift of biprojection if and only if $x=c\sum_{h\in H}\chi(h)hg$, where $H$ is a subgroup of $G$ and $\chi$ is a one-dimensional representation of $H$, $g\in G$, $c\in\mathbb{C}$, see \cite[Proposition 8.1]{JLW16}.
\subsection{Tomita-Takesaki theory}
We recall some definitions in Tomita-Takesaki theory that will be frequently used in the following sections.
We assume that $\varphi$ is a normal faithful state on $\mathcal{M}$, i.e., an equilibrium state.
The closure of $*$-operation on $\mathcal{M}$ is denoted by $S_{\varphi}$.
The modular operator is denoted by $\Delta_{\varphi}=S_{\varphi}^*S_{\varphi}$.
The modular automorphism group is denoted by $\sigma^{\varphi}$.
Let $\mathcal{E}_{\mathcal{N}}$ be the minimal condition expectation from $\mathcal{M}$ onto $\mathcal{N}$.
Denote by $\widetilde{\varphi}=\varphi\mathcal{E}_{\mathcal{N}}$.
The relative modular operator of  $\widetilde {\varphi}$ and $\varphi$ is denoted by $\Delta_{\widetilde{\varphi},\varphi}$.
The modular Hamiltonian $H_{\varphi}$ is $-\log \Delta_{\widetilde{\varphi},\varphi}$.
The physical Hamiltonian $K_{\varphi}$ at inverse temperature $\beta$ is 
\begin{align*}
    K_{\varphi}=\beta^{-1} (H_{\varphi}-\log[\mathcal{M}:\mathcal{N}])=\beta^{-1}(-\log \Delta_{\widetilde{\varphi},\varphi}-\log[\mathcal{M}:\mathcal{N}]).
\end{align*}
We refer to \cite{Lon18,Lon20} for more details on modular Hamiltonian, physical Hamiltonian etc.

\section{Phase}\label{sec:phase}
The phases of matter encountered in everyday life include gases, liquids, solids, and plasma.
Phase transitions involve changes between these states, such as freezing, melting, sublimation, vaporization, condensation,
deposition, ionization, and recombination.
This topic is fundamental in condensed matter physics and statistical mechanics.
The concept of phase transitions was first proposed by Ehrenfest \cite{Ehr33} following the discovery of a surprising phase transition in liquid helium. 
More recent definitions of phase transitions can be found in works such as those by Stanley (1987) \cite{Sta72}, and Plischke and Bergersen (2006) \cite{PLM02}. 
Various parameters, including temperature, pressure, magnetic field, and electric field, can drive a phase transition.
The phase can be viewed as an equivalent class of states of matter and the parameters such as temperature, pressure can be viewed as certain channels interchanging states of matters.

In this section, we will first review the Petz recovery map and the relative quasi-entropy.
Next we consider the reformulation of the Petz recovery map in bimodule case and  give an equivalent description of the equivalent class of a state by relative quasi-entropies.
Finally, we discuss the free energy in subfactors.

\subsection{Petz recovery map}
Suppose $\mathcal{M}$ is a von Neumann algebra and $\Phi$ is a faithful quantum channel on $\mathcal{M}$.
Let $\varphi$ be a normal faithful state on $\mathcal{M}$.
Then $\varphi\Phi$ is also a normal faithful state on $\mathcal{M}$.
Let $\Omega_{\varphi},\Omega_{\varphi\Phi}$ be the unit vectors obtained by the GNS representations of $\mathcal{M}$ induced by $\varphi,\varphi\Phi$ respectively.
The Petz recovery map \cite{Pet88} with respect to the state $\varphi$ and the quantum channel $\Phi$ is the following.
\begin{definition}[Petz Recovery Map]
Suppose $\mathcal{M}$ is a von Neumann algebras and $\Phi$ is a faithful quantum channel on $\mathcal{M}$.
Let $\varphi$ be a normal faithful state on $\mathcal{M}$.
Then the Petz recovery map $\Phi^*_{\varphi}$ on $\mathcal{M}$ is defined as follows
\begin{align}
    \langle \Phi(x) \Omega_{\varphi},Jy\Omega_{\varphi}\rangle =\langle x\Omega_{\varphi\Phi},J\Phi^*_{\varphi}(y)\Omega_{\varphi\Phi}\rangle,\quad x,y\in\mathcal{M}.
\end{align}
\end{definition}
\begin{remark}
    We have that the Petz recovery map $\Phi_{\varphi}^*$ is a quantum channel if $\Phi$ is a quantum channel.
    Moreover, 
    \begin{align*}
        \varphi \Phi\Phi_{\varphi}^*=\varphi.
    \end{align*}
\end{remark}
Now let $\varphi, \rho$ be two faithful normal states.
 They are called equivalent if there exist quantum channels $\Phi$ and $\Psi$ such that $\varphi\Phi=\rho$ and $\rho=\varphi\Psi$.
By Petz's result, we have that $\varphi$ and $ \rho$ are equivalent if and only if $\varphi\Phi=\rho$.
We will see that  there is a slight difference in bimodule case in \S \ref{subsec:bimodule petz recovery}.

Next we consider the relative quasi-entropy for two normal states.
Let $\varphi,\rho$ be two normal states  with  $supp(\varphi)\leq supp(\rho)$ and let $\Delta_{\varphi,\rho}$ be the relative modular operator.
The relative quasi-entropy $H_{\beta}(\varphi\|\rho)$, $0<\beta<1$, is defined as follows 
\begin{align*}
    H_{\beta}(\varphi\|\rho)=\langle\Delta_{\varphi,\rho}^{\beta} \Omega_{\rho},\Omega_{\rho}\rangle.
\end{align*}
The following lemma can be proved by \cite{Pet88} and \cite{Pet03}.
For completeness, we give the proof here.
\begin{lemma}\label{lem:beta}
    Suppose $\mathcal{M}$ is a von Neumann algebra and $\Phi$ is a faithful quantum channel on $\mathcal{M}$.
    Suppose $\varphi,\rho$ are faithful normal states, $0<\beta<1$.
    Then $H_{\beta}(\varphi\|\rho)=H_{\beta}(\varphi\Phi\|\rho\Phi)$ if and only if $H_{1/2}(\varphi\|\rho)=H_{1/2}(\varphi\Phi\|\rho\Phi)$.
\end{lemma}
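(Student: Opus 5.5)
Both equalities will be shown to be equivalent to a third condition that does not depend on $\beta$, namely the sufficiency of $\Phi$ for the pair $\{\varphi,\rho\}$. By Petz's sufficiency theorem \cite{Pet88,Pet03}, sufficiency means that the Petz recovery map $\Phi^*_{\rho}$ satisfies both $\rho\Phi\Phi^*_{\rho}=\rho$ and $\varphi\Phi\Phi^*_{\rho}=\varphi$. The first of these always holds, as noted in the Remark after the definition. So the statement reduces to the following claim: for every fixed $\beta\in(0,1)$,
\begin{align*}
H_{\beta}(\varphi\|\rho)=H_{\beta}(\varphi\Phi\|\rho\Phi)\iff \varphi\Phi\Phi^*_{\rho}=\varphi .
\end{align*}
Once the claim holds for all $\beta$, applying it at $\beta$ and at $\beta=1/2$ gives the lemma.

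\textbf{Step 1 (sufficiency implies equality).} Suppose $\varphi\Phi\Phi^*_\rho=\varphi$. Monotonicity of $H_\beta$ under quantum channels (Uhlmann/Petz; for $0<\beta<1$ it is monotone increasing) applied first to $\Phi$ and then to $\Phi^*_\rho$ gives
\begin{align*}
H_\beta(\varphi\|\rho)\le H_\beta(\varphi\Phi\|\rho\Phi)\le H_\beta(\varphi\Phi\Phi^*_\rho\|\rho\Phi\Phi^*_\rho)=H_\beta(\varphi\|\rho).
\end{align*}
Hence all three quantities are equal, and in particular the first two are.

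\textbf{Step 2 (equality implies sufficiency).} This is the main obstacle. Let $V$ be the contraction defined by $V x\Omega_{\rho\Phi}=\Phi(x)\Omega_\rho$; it is a contraction by the Schwarz inequality. Petz's argument shows that
\begin{align*}
V^*\Delta_{\varphi,\rho}V\le \Delta_{\varphi\Phi,\rho\Phi}.
\end{align*}
Since $t\mapsto t^\beta$ is operator monotone and operator concave, the Hansen--Pedersen (Jensen) inequality then gives
\begin{align*}
V^*\Delta_{\varphi,\rho}^\beta V\le (V^*\Delta_{\varphi,\rho}V)^\beta\le\Delta_{\varphi\Phi,\rho\Phi}^\beta .
\end{align*}
The two sides of the assumed equality are exactly the expectations of these operators in $\Omega_{\rho\Phi}$, because $V\Omega_{\rho\Phi}=\Omega_\rho$.

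Next use the integral representation
\begin{align*}
t^\beta=\frac{\sin\pi\beta}{\pi}\int_0^\infty s^{\beta-1}\frac{t}{t+s}\,ds .
\end{align*}
Equality of the expectations, together with the operator inequalities holding for each resolvent term, forces
\begin{align*}
\langle (V^*(\Delta_{\varphi,\rho}+s)^{-1}V-(\Delta_{\varphi\Phi,\rho\Phi}+s)^{-1})\Omega_{\rho\Phi},\Omega_{\rho\Phi}\rangle=0
\end{align*}
for almost every $s$, and then for every $s$ by continuity. Since the operator inside is positive (or negative), it annihilates $\Omega_{\rho\Phi}$. This gives
\begin{align*}
V^*(\Delta_{\varphi,\rho}+s)^{-1}\Omega_\rho=(\Delta_{\varphi\Phi,\rho\Phi}+s)^{-1}\Omega_{\rho\Phi}\quad\text{for all } s>0 .
\end{align*}
By analytic continuation in $s$ this upgrades to
\begin{align*}
V^*\Delta_{\varphi,\rho}^{it}\Omega_\rho=\Delta_{\varphi\Phi,\rho\Phi}^{it}\Omega_{\rho\Phi}\quad\text{for all real } t .
\end{align*}
This identity is precisely Petz's characterization of sufficiency, and the Connes cocycle identity then yields $\varphi\Phi\Phi^*_\rho=\varphi$.

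The resulting condition no longer involves $\beta$, so the case $\beta$ and the case $1/2$ are both equivalent to it. The delicate points are the passage from equality of expectations to the vector identity, which uses positivity together with the integral representation, and the analytic continuation from the resolvents to $\Delta^{it}$. For both I would follow \cite[Theorem 3]{Pet88} closely.
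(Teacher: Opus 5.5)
Your proof is correct, but it takes a longer route than the paper. The paper's $\beta$-independent middle condition is not sufficiency. It is the equality of resolvent expectations $\langle(\Delta_{\varphi\Phi,\rho\Phi}+s)^{-1}\Omega_{\rho\Phi},\Omega_{\rho\Phi}\rangle=\langle(\Delta_{\varphi,\rho}+s)^{-1}\Omega_{\rho},\Omega_{\rho}\rangle$ for all $s>0$. Citing Petz for the inequality $\leq$ at each $s$, the paper notes that equality of $H_\beta$ forces equality of the integrands almost everywhere, hence everywhere by continuity. Substituting back into the integral formula with $\beta=1/2$ gives equality of $H_{1/2}$, and the converse is the same argument with $\beta$ and $1/2$ exchanged.

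You reach exactly this condition midway through your Step 2, right after ``for every $s$ by continuity'', since $V\Omega_{\rho\Phi}=\Omega_\rho$. So you could stop there. The vector identity, the analytic continuation to $\Delta^{it}$, the cocycle and recovery-map step, and all of Step 1 are not needed for the lemma.

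What your route buys is a stronger statement: each single equality $H_\beta(\varphi\|\rho)=H_\beta(\varphi\Phi\|\rho\Phi)$ is equivalent to sufficiency. In effect you prove the $\beta$-version of Theorem~\ref{thm:petz} directly, and the lemma becomes a corollary. The paper instead proves the lemma first and uses it to transport Petz's $\beta=1/2$ result to all $\beta$. The cost is that your hardest step, from $V^*\Delta_{\varphi,\rho}^{it}\Omega_\rho=\Delta_{\varphi\Phi,\rho\Phi}^{it}\Omega_{\rho\Phi}$ to $\varphi\Phi\Phi^*_\rho=\varphi$, is outsourced to Petz. On the other hand, your Hansen argument actually proves the resolvent inequality that the paper only cites.

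One justification needs correcting. The operator $V^*(\Delta_{\varphi,\rho}+s)^{-1}V-(\Delta_{\varphi\Phi,\rho\Phi}+s)^{-1}$ is not of definite sign when $V$ is not an isometry. For example, for $\Phi=\omega(\cdot)\mathbf{1}$, $V^*V$ is the rank-one projection onto $\Omega_{\rho\Phi}$. The operator is then negative on $\Omega_{\rho\Phi}^{\perp}$, yet positive at $\Omega_{\rho\Phi}$ when $\varphi\neq\rho$.

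What your termwise inequality $V^*\Delta_{\varphi,\rho}(\Delta_{\varphi,\rho}+s)^{-1}V\leq\Delta_{\varphi\Phi,\rho\Phi}(\Delta_{\varphi\Phi,\rho\Phi}+s)^{-1}$ actually gives is positivity of $(1-V^*V)+s\bigl[V^*(\Delta_{\varphi,\rho}+s)^{-1}V-(\Delta_{\varphi\Phi,\rho\Phi}+s)^{-1}\bigr]$. Since $V$ is a contraction with $\|V\Omega_{\rho\Phi}\|=\|\Omega_{\rho\Phi}\|$, you have $(1-V^*V)\Omega_{\rho\Phi}=0$. So this positive operator has zero expectation at $\Omega_{\rho\Phi}$, hence annihilates it, and your vector identity follows. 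This is a wording fix, not a gap.
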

\begin{proof}
For simplicity, we write $\Delta=\Delta_{\varphi,\rho}$ and $\Delta_0=\Delta_{\varphi\Phi,\rho\Phi}$.
    First, we use the formula
\begin{align*}
    x^{\beta}=\frac{\sin \pi \beta}{\pi}\int_{0}^{\infty}t^{\beta-1}-t^{\beta}(x+t)^{-1}dt
\end{align*}
and obtain that
\begin{align*}
    H_{\beta}(\varphi\|\rho)=\frac{\sin \pi \beta}{\pi}\int_{0}^{\infty}t^{\beta-1}-t^{\beta}\langle (\Delta+t)^{-1}\Omega_{\rho},\Omega_{\rho} \rangle dt.
\end{align*}
Similarly,
\begin{align*}
    H_{\beta}(\varphi\Phi\|\rho\Phi)=\frac{\sin \pi \beta}{\pi}\int_{0}^{\infty}t^{\beta-1}-t^{\beta}\langle (\Delta_{0}+t)^{-1}\Omega_{\rho\Phi},\Omega_{\rho\Phi} \rangle dt.
\end{align*}
By the proof of \cite[Theorem 3]{Pet88}, we have
\begin{align*}
    \langle (\Delta_{0}+t)^{-1}\Omega_{\rho\Phi},\Omega_{\rho\Phi} \rangle\leq \langle (\Delta+t)^{-1}\Omega_{\rho},\Omega_{\rho} \rangle.
\end{align*}
Thus $H_{\beta}(\varphi\|\rho)=H_{\beta}(\varphi\Phi\|\rho\Phi)$ implies that $  \langle (\Delta_{0}+t)^{-1}\Omega_{\rho\Phi},\Omega_{\rho\Phi} \rangle= \langle (\Delta+t)^{-1}\Omega_{\rho},\Omega_{\rho} \rangle$ for almost $t\in\mathbb{R}^+$ and  by continuity for all $t\in\mathbb{R}^+$.
Hence $H_{1/2}(\varphi\|\rho)=H_{1/2}(\varphi\Phi\|\rho\Phi)$.
The converse direction is similar.
\end{proof}
\begin{theorem}[Theorem 3 in \cite{Pet88}]\label{thm:petz}
     Suppose $\mathcal{M}$ is a von Neumann algebra and $\Phi$ is a faithful quantum channel on $\mathcal{M}$.
    Suppose $\varphi,\rho$ are faithful normal states.
    Then $\Phi_{\varphi}^*=\Phi_{\rho}^*$ if and only if $H_{1/2}(\varphi\|\rho)=H_{1/2}(\varphi\Phi\|\rho\Phi)$, and hence if and only  if $H_{\beta}(\varphi\|\rho)=H_{\beta}(\varphi\Phi\|\rho\Phi)$, $0<\beta<1$, by Lemma \ref{lem:beta}.
\end{theorem}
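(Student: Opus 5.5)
The converse direction is the easy one, so I would dispose of it first. Suppose $\Phi^*_{\varphi}=\Phi^*_{\rho}$. The Remark after the definition gives $\varphi\Phi\Phi^*_{\varphi}=\varphi$ and $\rho\Phi\Phi^*_{\rho}=\rho$. Hence the single quantum channel $\Phi^*_{\rho}$ maps the pair $(\varphi\Phi,\rho\Phi)$ back to $(\varphi,\rho)$. I then apply the monotonicity $H_{1/2}(\varphi\|\rho)\leq H_{1/2}(\varphi\Phi\|\rho\Phi)$ twice, once for $\Phi$ and once for $\Phi^*_{\rho}$. The two inequalities go in opposite directions, so they force equality. (Monotonicity is meant in the sense of the resolvent inequality used in Lemma \ref{lem:beta}; since $\beta\mapsto$ quasi-entropy is built from $-\langle(\Delta+t)^{-1}\Omega,\Omega\rangle$, the inequality goes the right way.) The statement for general $\beta$ then follows from Lemma \ref{lem:beta}.

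For the forward direction I would follow Petz's Hilbert space approach. Define $V:\mathcal{H}_{\rho\Phi}\to\mathcal{H}_{\rho}$ by $Vx\Omega_{\rho\Phi}=\Phi(x)\Omega_{\rho}$. By the Kadison–Schwarz inequality $\Phi(x)^*\Phi(x)\leq\Phi(x^*x)$, $V$ is a contraction. The same inequality applied to the relative Tomita operators gives $V^*\Delta V\leq\Delta_0$, where $\Delta=\Delta_{\varphi,\rho}$ and $\Delta_0=\Delta_{\varphi\Phi,\rho\Phi}$, in the form sense. Operator convexity of $s\mapsto(s+t)^{-1}$, together with Hansen's inequality for contractions, then gives
\begin{align*}
\langle(\Delta_0+t)^{-1}\Omega_{\rho\Phi},\Omega_{\rho\Phi}\rangle\leq\langle V^*(\Delta+t)^{-1}V\Omega_{\rho\Phi},\Omega_{\rho\Phi}\rangle=\langle(\Delta+t)^{-1}\Omega_{\rho},\Omega_{\rho}\rangle .
\end{align*}
This is the inequality quoted in the proof of Lemma \ref{lem:beta}. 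If $H_{1/2}$ is preserved, the argument of that lemma gives equality here for all $t>0$.

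I would then promote this scalar equality to the vector identity
\begin{align*}
V(\Delta_0+t)^{-1}\Omega_{\rho\Phi}=(\Delta+t)^{-1}\Omega_{\rho}\quad\text{for all }t>0.
\end{align*}
The idea is to compare the norms of the difference, using that every intermediate inequality above is now an equality. Since the functions $(s+t)^{-1}$ generate the relevant function algebra, this identity should extend to all bounded Borel functions. In particular it gives $V\Delta_0^{it}\Omega_{\rho\Phi}=\Delta^{it}\Omega_{\rho}$. In terms of Connes cocycles this reads
\begin{align*}
\Phi([D\varphi\Phi:D\rho\Phi]_t)\Omega_{\rho}=[D\varphi:D\rho]_t\Omega_{\rho}.
\end{align*}
Because $\rho$ is faithful, $\Omega_\rho$ is separating, so $\Phi([D\varphi\Phi:D\rho\Phi]_t)=[D\varphi:D\rho]_t$.

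The last step, which I expect to be the main obstacle, is converting this cocycle identity into $\Phi^*_{\varphi}=\Phi^*_{\rho}$. My plan is to express the Petz map via $V$ and modular data. From the defining relation, $\Phi^*_{\rho}(y)$ is determined by $J\Phi^*_\rho(y)\Omega_{\rho\Phi}=V^*Jy\Omega_\rho$, and similarly for $\varphi$ with the corresponding contraction. I would then rewrite $\Omega_\varphi$ and $\Omega_{\varphi\Phi}$ in terms of $\Omega_\rho$ and $\Omega_{\rho\Phi}$ through $\Delta^{1/2}$ and $\Delta_0^{1/2}$. The cocycle relation $\Delta_{\varphi}^{it}=[D\varphi:D\rho]_t\Delta_\rho^{it}[D\varphi:D\rho]_t^*$, together with analytic continuation from $it$ to $1/2$, should show that the two defining relations coincide. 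Controlling the analytic continuation in the unbounded setting is the technical point I expect to require care; I would handle it by restricting to entire analytic elements of the modular groups.
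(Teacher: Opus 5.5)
The paper does not prove this statement itself. It imports it from Petz \cite{Pet88} and uses Lemma \ref{lem:beta} only to pass from $\beta=1/2$ to general $\beta$. Your outline follows Petz's Hilbert-space route. The converse direction is correct as written. The forward direction up to the cocycle identity is right in substance, but needs three small repairs:
\begin{enumerate}
\item Hansen's inequality $f(V^*AV)\le V^*f(A)V$ for contractions requires $f(0)\le 0$. For $f(s)=(s+t)^{-1}$ the correct Jensen form is $(V^*\Delta V+t)^{-1}\le V^*(\Delta+t)^{-1}V+t^{-1}(1-V^*V)$. The extra term vanishes on $\Omega_{\rho\Phi}$ because $V\Omega_{\rho\Phi}=\Omega_\rho$ is a unit vector, so $V^*V\Omega_{\rho\Phi}=\Omega_{\rho\Phi}$.
\item Equality plus positivity of the difference operator gives $V^*(\Delta+t)^{-1}\Omega_\rho=(\Delta_0+t)^{-1}\Omega_{\rho\Phi}$, i.e.\ the identity with $V^*$, not with $V$. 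To get your $V$-version you also need $\|(\Delta_0+t)^{-1}\Omega_{\rho\Phi}\|=\|(\Delta+t)^{-1}\Omega_\rho\|$, which follows by differentiating the scalar equality in $t$. Then $VV^*$ fixes $(\Delta+t)^{-1}\Omega_\rho$, and the $V$-version follows.
\item The identity $Vf(\Delta_0)\Omega_{\rho\Phi}=f(\Delta)\Omega_\rho$ is linear but not multiplicative in $f$. What you need is density of the \emph{linear span} of resolvents (uniqueness of the Stieltjes transform), not that they generate an algebra.
\end{enumerate}

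The genuine gap is the last step, and it is more than an analytic-continuation technicality. Write $w_t=[D\varphi\Phi:D\rho\Phi]_t$ and $u_t=[D\varphi:D\rho]_t$. The cocycle identity $\Phi(w_t)=u_t$ tells you about $\Phi$ only at the points $w_t$. The defining relation of $\Phi^*_\varphi$, however, involves $\Phi(x)\Omega_\varphi$ for arbitrary $x$. To trade $\Omega_\varphi$ for $\Omega_\rho$ you must move the cocycle through $\Phi(x)$, and nothing in your sketch allows that.

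The missing idea is the multiplicative domain. Since $w_t$ and $\Phi(w_t)=u_t$ are both unitary, $\Phi(w_t^*w_t)=\Phi(w_t)^*\Phi(w_t)$ and $\Phi(w_tw_t^*)=\Phi(w_t)\Phi(w_t)^*$. By Choi's theorem, $\Phi(xw_t)=\Phi(x)u_t$ and $\Phi(w_t^*x)=u_t^*\Phi(x)$ for all $x$, i.e.\ $Vw_t^*=u_t^*V$. Work in one standard form with natural-cone representatives and put $y'=JyJ\in\mathcal{M}'$. Using $u_t\Omega_\rho=\Delta^{it}\Omega_\rho$ and $w_t\Omega_{\rho\Phi}=\Delta_0^{it}\Omega_{\rho\Phi}$, you then get
\begin{align*}
Vx\,\Delta_0^{it}\Omega_{\rho\Phi}=\Phi(x)\,\Delta^{it}\Omega_\rho,\qquad V^*y'\,\Delta^{it}\Omega_\rho=J\Phi^*_\rho(y)J\,\Delta_0^{it}\Omega_{\rho\Phi}.
\end{align*}
Now continue the vector-valued maps $z\mapsto\Delta^{iz}\Omega_\rho$ and $z\mapsto\Delta_0^{iz}\Omega_{\rho\Phi}$ to $z=-i/2$. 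This is legitimate: they are analytic on $-1/2<\mathrm{Im}\,z<0$ and continuous up to the boundary, since $\Omega_\rho\in D(\Delta^{1/2})$. At $z=-i/2$ they become $\Omega_\varphi$ and $\Omega_{\varphi\Phi}$. The first identity then says the contraction $x\Omega_{\varphi\Phi}\mapsto\Phi(x)\Omega_\varphi$ equals $V$. The second then says $\Phi^*_\rho$ satisfies the defining relation of $\Phi^*_\varphi$. Since $\Omega_{\varphi\Phi}$ is separating for $\mathcal{M}'$, you get $\Phi^*_\rho=\Phi^*_\varphi$.

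Two further points about your final step:
\begin{itemize}
\item You must continue vectors, not the cocycles themselves, which need not be analytic. Restricting to entire analytic elements of the modular groups does not supply this.
\item The formula you planned to use, $\Delta_\varphi^{it}=[D\varphi:D\rho]_t\Delta_\rho^{it}[D\varphi:D\rho]_t^*$, is false as a spatial identity. The correct forms are $\Delta_{\varphi,\rho}^{it}=u_t\Delta_\rho^{it}$ (all you need) and $\Delta_\varphi^{it}=u_t\,Ju_tJ\,\Delta_\rho^{it}$. The conjugation formula holds only for $\sigma^\varphi_t=\mathrm{Ad}(u_t)\circ\sigma^\rho_t$ on $\mathcal{M}$.
\end{itemize}
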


\subsection{Bimodule case}\label{subsec:bimodule petz recovery}
In many physic systems, there are lots of conversed quantity, which is preserved by the channels. 
Due to this reason, we consider an inclusion  $\mathcal{N}\subseteq\mathcal{M}$ of von Neumann algebras and an $\mathcal{N}$-bimodule quantum channel $\Phi$ on $\mathcal{M}$.
When the inclusion is an irreducible finite inclusion of factors, we have that $\Phi$ is faithful and $\mathcal{E}_{\mathcal{N}}\Phi=\Phi\mathcal{E}_{\mathcal{N}}=\mathcal{E}_{\mathcal{N}}$.
Therefore, for any normal faithful state $\varphi$ on $\mathcal{M}$, let $\widetilde{\varphi}=\varphi\mathcal{E}_{\mathcal{N}}$, we have that $\widetilde{\varphi}=\widetilde{\varphi}\Phi$ for any bimodule quantum channel $\Phi$.
Moreover, the Petz recovery map $\Phi_{\varphi}^*$ has a graphical representation in planar algebras.
Therefore, we assume that $\mathcal{N}\subseteq\mathcal{M}$ is an irreducible finite inclusion of factors in the following contents.

The equivalent class of states is described by bimodule quantum channels.
We introduce the following notion of phase.
\begin{definition}
    Suppose that $\mathcal{N}\subseteq\mathcal{M}$ is an inclusion of von Neumann algebras and $\varphi,\psi$ are normal states on $\mathcal{M}$.
    We say $\varphi,\psi$ are equivalent if there exist bimodule quantum channels  $\Phi,\Psi$ on $\mathcal{M}$ such that
    \begin{align*}
        \varphi\Phi=\psi,\quad \psi \Psi=\varphi.
    \end{align*}
    We denote by $[\varphi]$ the equivalent class and call it as a phase for quantum system $\mathcal{N}\subseteq\mathcal{M}$.
\end{definition}
\begin{remark}
    Suppose $\varphi,\psi$ are normal states on $\mathcal{M}$ and $\varphi$ is equivalent to $\psi$.
    We have that $\varphi|_{\mathcal{N}}=\psi|_{\mathcal{N}}$, i.e. the information of $\varphi$ and $\psi$ coincide on the subsystem $\mathcal{N}$.
\end{remark}

\begin{remark}\label{rem:channel faithful}
    Suppose $\mathcal{N}\subseteq\mathcal{M}$ is an irreducible finite inclusion of factors and $\varphi$ is a normal state.
Let $\psi=\varphi\Phi$ and $\Phi^*_{\varphi}$ the Petz recovery map of $\Phi$ relative to $\varphi$.  
Then we have that $\varphi \Phi \Phi_{\varphi}^*=\varphi$.
Hence $\psi \Phi_{\varphi}^*=\varphi$.
However $\Phi_{\varphi}^*$ is not a bimodule quantum channel in general.
\end{remark}

\begin{remark}\label{rem:bimodule condition}
Suppose $\Phi:\mathcal{M}\to \mathcal{M}$ is a faithful  quantum channel and $\varphi$ is a faithful normal state on $\mathcal{M}$. 
By \cite[Theorem 3.18 and Corollary 3.19]{Pau02}, $\Phi^*_{\varphi}$ is a bimodule quantum channel if and only if $\Phi^*_{\varphi}(y)=y$ for any $y\in\mathcal{N}$.
Then by \cite[Theorem 2]{Pet88}, we have that $\Phi^*_{\varphi}$ is a bimodule quantum channel if and only if $\Phi(\sigma_t^{\varphi\Phi}(y) )=\sigma_t^{\varphi}(y)$ for any $y\in\mathcal{N}$ where $\sigma_t^{\varphi}$ is the modular automorphism group of $\varphi$.
\end{remark}

Though the Petz recovery map $\Phi^*_{\varphi}$ may not be a bimodule quantum channel, the Petz recovery $\Phi_{\widetilde{\varphi}}^*$ is a bimodule quantum channel.
\begin{lemma}\label{lem:adjont bimodule map}
    Suppose $\mathcal{N}\subseteq\mathcal{M}$ is an irreducible finite inclusion of factors and $\varphi$ is a faithful normal state on $\mathcal{M}$, $\Phi$ is a bimodule quantum channel.  
    Then $\Phi_{\widetilde{\varphi}}^*$ is a bimodule quantum channel.
\end{lemma}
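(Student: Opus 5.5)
The plan is to reduce the bimodule property of $\Phi^*_{\widetilde{\varphi}}$ to the criterion in Remark \ref{rem:bimodule condition}: it suffices to show $\Phi(\sigma_t^{\widetilde{\varphi}\Phi}(y))=\sigma_t^{\widetilde{\varphi}}(y)$ for all $y\in\mathcal{N}$ and $t\in\mathbb{R}$. That $\Phi^*_{\widetilde{\varphi}}$ is a quantum channel is already covered by the remark after the definition of the Petz recovery map. The hypotheses needed for the Petz framework are also in place: $\Phi$ is faithful, and $\widetilde{\varphi}=\varphi\mathcal{E}_{\mathcal{N}}$ is faithful normal because $\varphi$ is faithful and $\mathcal{E}_{\mathcal{N}}$ is a faithful normal conditional expectation.

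The first step is to observe, as stated at the start of \S\ref{subsec:bimodule petz recovery}, that $\mathcal{E}_{\mathcal{N}}\Phi=\mathcal{E}_{\mathcal{N}}$. Hence $\widetilde{\varphi}\Phi=\varphi\mathcal{E}_{\mathcal{N}}\Phi=\widetilde{\varphi}$, and the condition reduces to $\Phi(\sigma_t^{\widetilde{\varphi}}(y))=\sigma_t^{\widetilde{\varphi}}(y)$ for $y\in\mathcal{N}$.

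The second step uses Takesaki's theorem. Since $\widetilde{\varphi}=\widetilde{\varphi}\mathcal{E}_{\mathcal{N}}$, the subalgebra $\mathcal{N}$ is globally invariant under $\sigma^{\widetilde{\varphi}}$, and the restriction satisfies $\sigma_t^{\widetilde{\varphi}}|_{\mathcal{N}}=\sigma_t^{\varphi|_{\mathcal{N}}}$. In particular $\sigma_t^{\widetilde{\varphi}}(y)\in\mathcal{N}$. Because $\Phi$ is unital and $\mathcal{N}$-bimodular, $\Phi(z)=z\Phi(1)=z$ for every $z\in\mathcal{N}$, which gives the required identity. By Remark \ref{rem:bimodule condition}, $\Phi^*_{\widetilde{\varphi}}$ fixes $\mathcal{N}$ pointwise, and by the Paulsen multiplicative-domain argument cited there it is then an $\mathcal{N}$-bimodule map.

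As a cross-check, I would also verify the fixed-point property directly from the defining relation of the Petz map, again using $\widetilde{\varphi}\Phi=\widetilde{\varphi}$. For $y\in\mathcal{N}$ and all $x\in\mathcal{M}$, the goal is $\langle x\Omega,J\Phi^*_{\widetilde{\varphi}}(y)\Omega\rangle=\langle x\Omega,Jy\Omega\rangle$ with $\Omega=\Omega_{\widetilde{\varphi}}$. Equivalently, $\langle \Phi(x)\Omega,Jy\Omega\rangle=\langle x\Omega,Jy\Omega\rangle$. Using $Jy\Omega=\Delta^{1/2}y^*\Omega$ and the fact that $y^*\Omega$ lies in the $\mathcal{N}$-part, this becomes $\widetilde{\varphi}(\sigma_{-i/2}(y)^{*}\Phi(x))$-type expressions, with $\sigma_{-i/2}(y)$ analytic in $\mathcal{N}$. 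These equal $\widetilde{\varphi}(\cdot\, x)$ because $\widetilde{\varphi}(n\Phi(x))=\varphi(n\mathcal{E}_{\mathcal{N}}\Phi(x))=\varphi(n\mathcal{E}_{\mathcal{N}}(x))$, after approximating by analytic elements.

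The main obstacle is the justification that $\sigma^{\widetilde{\varphi}}$ leaves $\mathcal{N}$ invariant, i.e. the correct use of Takesaki's theorem for the $\widetilde{\varphi}$-preserving expectation $\mathcal{E}_{\mathcal{N}}$, together with the analytic-continuation details in the direct check. Everything else is formal.
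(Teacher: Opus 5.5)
Your proposal is correct and is essentially the paper's argument. You use $\widetilde{\varphi}\Phi=\widetilde{\varphi}$ (which the paper leaves implicit) to reduce the criterion of Remark~\ref{rem:bimodule condition} to $\Phi(\sigma_t^{\widetilde{\varphi}}(y))=\sigma_t^{\widetilde{\varphi}}(y)$ for $y\in\mathcal{N}$, show $\sigma_t^{\widetilde{\varphi}}(\mathcal{N})\subseteq\mathcal{N}$, and conclude because a unital bimodule channel fixes $\mathcal{N}$ pointwise; the only difference is that you cite Takesaki's theorem for the invariance, whereas the paper verifies it by hand by showing that $\mathcal{E}_{\mathcal{N}}$ is self-adjoint on $L^2(\mathcal{M},\widetilde{\varphi})$ and commutes with $S_{\widetilde{\varphi}}$, hence with $\Delta_{\widetilde{\varphi}}^{it}$.
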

\begin{proof}
For any $x,y\in\mathcal{M}$,
\begin{align*}
   \widetilde{\varphi}(\mathcal{E}_{\mathcal{N}}(x)y^*) =\langle \mathcal{E}_{\mathcal{N}}(x)\Omega_{\widetilde{\varphi}},y\Omega_{\widetilde{\varphi}}\rangle=\langle x\Omega_{\widetilde{\varphi}},\mathcal{E}_{\mathcal{N}}^*( y)\Omega_{\widetilde{\varphi}}\rangle=\widetilde{\varphi}(x\mathcal{E}_{\mathcal{N}}^*( y)^*).
\end{align*}
Since $\widetilde{\varphi}\mathcal{E}_{\mathcal{N}}=\widetilde{\varphi}$, we have 
\begin{align*}
    \widetilde{\varphi}(x\mathcal{E}_{\mathcal{N}}(y^*))=\widetilde{\varphi}(\mathcal{E}_{\mathcal{N}}(x)y^*)=\widetilde{\varphi}(x\mathcal{E}_{\mathcal{N}}^*( y)^*).
\end{align*}
Thus $\mathcal{E}_{\mathcal{N}}=\mathcal{E}_{\mathcal{N}}^*$.
    For any $x\in\mathcal{M}$, we have that
    \begin{align*}
S_{\widetilde{\varphi}}\mathcal{E}_{\mathcal{N}}x\Omega_{\widetilde{\varphi}}=\mathcal{E}_{\mathcal{N}}S_{\widetilde{\varphi}}x\Omega_{\widetilde{\varphi}},
    \end{align*}
    i.e. $\mathcal{E}_{\mathcal{N}}S_{\widetilde{\varphi}}\subset S_{\widetilde{\varphi}}\mathcal{E}_{\mathcal{N}}$.
This implies that  $\mathcal{E}_{\mathcal{N}}S_{\widetilde{\varphi}}^*\subset S_{\widetilde{\varphi}}^*\mathcal{E}_{\mathcal{N}}$.
Moreover, $\mathcal{E}_{\mathcal{N}}\Delta_{\widetilde{\varphi}}^{it}\subset\Delta_{\widetilde{\varphi}}^{it}\mathcal{E}_{\mathcal{N}}$. 
    Hence $\sigma_t^{\widetilde{\varphi}}(y)\in\mathcal{N}$ for any $y$ in $\mathcal{N}$.
Since $\Phi$ is a bimodule quantum channel, $\Phi(\sigma_t^{\widetilde{\varphi}}(y))=\sigma_t^{\widetilde{\varphi}}(y)$ for any $y$ in $\mathcal{N}$.
By Remark \ref{rem:bimodule condition}, $\Phi_{\widetilde{\varphi}}^*$ is a bimodule quantum channel.
\end{proof}
By Petz's results, we have that $\Phi_{\varphi}^*=\Phi_{\widetilde{\varphi}}^*$ if and only if $H_{\beta}(\varphi\|\widetilde{\varphi})= H_{\beta}(\varphi\Phi\|\widetilde{\varphi}\Phi)$, $0<\beta<1$.
Note that $\widetilde{\varphi}\Phi=\widetilde{\varphi}$.
Hence $\Phi_{\varphi}^*=\Phi_{\widetilde{\varphi}}^*$ if and only if $H_{\beta}(\varphi\|\widetilde{\varphi})= H_{\beta}(\varphi\Phi\|\widetilde{\varphi})$.
Then the equivalent classes can be described by relative quasi-entropies.
\begin{theorem}\label{thm:relative quasi entropy}
    Suppose $\mathcal{N}\subseteq\mathcal{M}$ is an irreducible finite inclusion of factors and $\varphi,\psi$ are faithful normal states, $0<\beta<1$. 
Suppose there exists a bimodule quantum channel $\Phi$ such that $\varphi\Phi=\psi$.
    Then we have that $\varphi$ is equivalent to $\psi$ if and only if 
\begin{align*}
    H_{\beta}(\varphi\|\widetilde{\varphi})= H_{\beta}(\psi\|\widetilde{\varphi}),
\end{align*}
where $\widetilde{\varphi}=\varphi\mathcal{E}_{\mathcal{N}}$.
\end{theorem}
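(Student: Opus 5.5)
The plan is to reduce both directions to the equality of Petz recovery maps $\Phi_{\varphi}^*=\Phi_{\widetilde{\varphi}}^*$. By Theorem \ref{thm:petz} and the remark preceding the statement, this equality is equivalent to $H_{\beta}(\varphi\|\widetilde{\varphi})=H_{\beta}(\varphi\Phi\|\widetilde{\varphi}\Phi)$. Since $\widetilde{\varphi}\Phi=\widetilde{\varphi}$ and $\varphi\Phi=\psi$, the right-hand side is $H_{\beta}(\psi\|\widetilde{\varphi})$. Faithfulness of $\Phi$, needed to apply Petz's theorem, holds because the inclusion is irreducible and finite.

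\emph{($\Leftarrow$)} Suppose the entropies are equal. Then $\Phi_{\varphi}^*=\Phi_{\widetilde{\varphi}}^*$. By Lemma \ref{lem:adjont bimodule map}, $\Phi_{\widetilde{\varphi}}^*$ is a bimodule quantum channel. By Remark \ref{rem:channel faithful}, $\psi\Phi_{\varphi}^*=\varphi\Phi\Phi_{\varphi}^*=\varphi$. Taking $\Psi=\Phi_{\widetilde{\varphi}}^*$ gives $\psi\Psi=\varphi$ with $\Psi$ a bimodule quantum channel, so $\varphi$ is equivalent to $\psi$.

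\emph{($\Rightarrow$)} Suppose a bimodule channel $\Psi$ satisfies $\psi\Psi=\varphi$. Then $\varphi\Phi\Psi=\varphi$ and $\widetilde{\varphi}\Phi\Psi=\widetilde{\varphi}$. Monotonicity of the quasi-entropy (the inequality from \cite{Pet88} used in Lemma \ref{lem:beta}) applied first to $\Phi$ and then to $\Psi$ gives a two-sided comparison. Applying it to $\Phi$ with the pair $(\varphi,\widetilde{\varphi})$ gives
\begin{align*}
H_{\beta}(\varphi\|\widetilde{\varphi})\ \text{vs}\ H_{\beta}(\psi\|\widetilde{\varphi}),
\end{align*}
and applying it to $\Psi$ with the pair $(\psi,\widetilde{\varphi})$ returns to $H_{\beta}(\varphi\|\widetilde{\varphi})$, with the inequality going the same direction both times. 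Since $H_\beta$ for $0<\beta<1$ is monotone (non-decreasing under channels in this convention), we get the chain
\begin{align*}
H_{\beta}(\varphi\|\widetilde{\varphi})\leq H_{\beta}(\psi\|\widetilde{\varphi})\leq H_{\beta}(\varphi\|\widetilde{\varphi}).
\end{align*}
Hence the two quantities are equal.

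\textbf{Main obstacle.} The delicate point is the sign convention and the faithfulness hypotheses. I must check that $\varphi$, $\psi$ and $\widetilde{\varphi}$ are faithful, so that relative modular operators are defined with full support. I also need $\widetilde{\varphi}\Psi=\widetilde{\varphi}$; this follows from $\mathcal{E}_{\mathcal{N}}\Psi=\mathcal{E}_{\mathcal{N}}$ for bimodule channels. Finally, the monotonicity direction must be extracted from the resolvent inequality $\langle(\Delta_0+t)^{-1}\Omega,\Omega\rangle\leq\langle(\Delta+t)^{-1}\Omega,\Omega\rangle$ in the proof of Lemma \ref{lem:beta}, integrated against the positive kernel $t^{\beta}$ with the minus sign. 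This yields $H_\beta(\varphi\Phi\|\rho\Phi)\geq H_\beta(\varphi\|\rho)$, and the direction is consistent for both channels.
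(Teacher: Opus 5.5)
Your proposal is correct and takes essentially the same route as the paper. The forward direction is the same two-sided monotonicity sandwich, using $\widetilde{\varphi}\Phi=\widetilde{\varphi}=\widetilde{\varphi}\Psi$. The converse uses Petz's theorem to get $\Phi_{\varphi}^*=\Phi_{\widetilde{\varphi}}^*$, then Lemma \ref{lem:adjont bimodule map} to make $\Phi_{\widetilde{\varphi}}^*$ the bimodule recovery channel.
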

\begin{proof}
    Suppose that $\varphi$ is equivalent to $\psi$.
    Then there also exists a bimodule quantum channel $\Psi$ such that  $\psi\Psi=\varphi$, and by the condition we have $\varphi\Phi=\psi$.
    By the monotonicity of the relative quasi-entropy, we have that
\begin{align*}
    H_{\beta}(\varphi\|\widetilde{\varphi})&=H_{\beta}(\psi\Psi\|\widetilde{\varphi})=H_{\beta}(\psi\Psi\|\widetilde{\varphi}\Psi)\\
    &\geq H_{\beta}(\psi\|\widetilde{\varphi})=H_{\beta} (\varphi\Phi\|\widetilde{\varphi})=H_{\beta} (\varphi\Phi\|\widetilde{\varphi}\Phi)
    \\&\geq H_{\beta} (\varphi\|\widetilde{\varphi}).
\end{align*}
This implies that $H_{\beta}(\varphi\|\widetilde{\varphi})=H_{\beta} (\psi\|\widetilde{\varphi})$.

Suppose that  $H_{\beta}(\varphi\|\widetilde{\varphi})=H_{\beta}(\psi\|\widetilde{\varphi})$.
By the discussion before the theorem, we have that $\Phi_{\varphi}^*=\Phi_{\widetilde{\varphi}}^*$.
Hence
$\varphi\Phi\Phi_{\widetilde{\varphi}}^*= \varphi\Phi \Phi_{\varphi}^*=\varphi$.
By Lemma \ref{lem:adjont bimodule map}, we see that $\Phi_{\widetilde{\varphi}}^*$ is a bimodule quantum channel.
Therefore, we have that
\begin{align*}
    \varphi\Phi=\psi,\quad \psi\Phi_{\widetilde{\varphi}}^*=\varphi.
\end{align*}
This completes the proof.
\end{proof}
\begin{remark}
    In the proof of Theorem \ref{thm:relative quasi entropy}, we need the condition that $\widetilde{\varphi}\Phi=\widetilde{\varphi}$ for any bimodule quantum channel $\Phi$, which is due to  the irreducibility of the subfactor.
\end{remark}
Recall that the Connes cocycle for $\varphi$, $\widetilde{\varphi}$ is defined as follows:
\begin{align*}
    [D\varphi,D\widetilde{\varphi}]_t=\Delta_{\varphi,\widetilde{\varphi}}^{it}\Delta_{\widetilde{\varphi},\widetilde{\varphi}}^{-it}.
\end{align*}
Theorem \ref{thm:relative quasi entropy} can be formulated in terms of Connes cocycles.
\begin{corollary}
    Suppose $\mathcal{N}\subseteq\mathcal{M}$ is an irreducible finite inclusion of factors and $\varphi,\psi$ are faithful normal states.
    We have that $\varphi$ is equivalent to $\psi$ if and only if there exists a bimodule quantum channel $\Phi$ such that $\varphi\Phi=\psi$ and
\begin{align*}
    \Phi([D\varphi\Phi,D\widetilde{\varphi} ]_t)=[D\varphi,D\widetilde{\varphi}]_t,\quad t\in\mathbb{R}
\end{align*}
or
\begin{align*}
    \Phi\Phi_{\widetilde{\varphi}}^*([ D\varphi,D\widetilde{\varphi}]_t)=[D\varphi,D\widetilde{\varphi}]_t,\quad t\in\mathbb{R}.
\end{align*}
\end{corollary}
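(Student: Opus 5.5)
Both directions reduce to Theorem \ref{thm:relative quasi entropy}, rewritten through the Connes cocycle. The tool is Petz's characterization of sufficiency \cite[Theorem 3]{Pet88}. For a faithful channel $\Phi$ and faithful normal states $\varphi,\rho$, equality in the monotonicity of $H_{\beta}$ is equivalent to the cocycle identity
\begin{align*}
\Phi([D\varphi\Phi,D\rho\Phi]_t)=[D\varphi,D\rho]_t,\qquad t\in\mathbb{R}.
\end{align*}
It is also equivalent to $\Phi_{\varphi}^*=\Phi_{\rho}^*$, the form already used in Theorem \ref{thm:petz}. We take $\rho=\widetilde{\varphi}$. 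As noted before Theorem \ref{thm:relative quasi entropy}, irreducibility gives $\widetilde{\varphi}\Phi=\widetilde{\varphi}$ for every bimodule quantum channel $\Phi$. Hence the condition $H_{\beta}(\varphi\|\widetilde{\varphi})=H_{\beta}(\varphi\Phi\|\widetilde{\varphi})$ is equivalent to the first displayed identity of the corollary,
\begin{align*}
\Phi([D\varphi\Phi,D\widetilde{\varphi}]_t)=[D\varphi,D\widetilde{\varphi}]_t .
\end{align*}

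For the forward direction, suppose $\varphi$ is equivalent to $\psi$. Choose bimodule quantum channels $\Phi,\Psi$ with $\varphi\Phi=\psi$ and $\psi\Psi=\varphi$. The forward half of the proof of Theorem \ref{thm:relative quasi entropy} gives $H_{\beta}(\varphi\|\widetilde{\varphi})=H_{\beta}(\psi\|\widetilde{\varphi})$. By the reformulation above, the cocycle identity holds for this $\Phi$.

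For the converse, suppose a bimodule quantum channel $\Phi$ with $\varphi\Phi=\psi$ satisfies the first identity. Petz's theorem turns it back into equality of the quasi-entropies. Theorem \ref{thm:relative quasi entropy} then shows that $\varphi$ is equivalent to $\psi$.

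The second identity needs more care; it is the main obstacle, because we must pass between the statement with $\Phi\Phi_{\widetilde{\varphi}}^*$ and the statement with $\Phi$ alone. I would use Petz's intertwining relation for the recovery map with respect to a state fixed by the channel. Here $\widetilde{\varphi}\Phi=\widetilde{\varphi}$, so the recovery map $\Phi_{\widetilde{\varphi}}^*$ is the $\widetilde{\varphi}$-adjoint of $\Phi$, and the relevant relation is
\begin{align*}
\Phi_{\widetilde{\varphi}}^*([D\varphi,D\widetilde{\varphi}]_t)=[D\varphi\Phi,D\widetilde{\varphi}]_t \quad\text{whenever } \Phi_{\varphi}^*=\Phi_{\widetilde{\varphi}}^*.
\end{align*}
This is how one proves in \cite{Pet88} that the recovery map restores the cocycle. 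Substituting it into the first identity shows that sufficiency implies the second identity.

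Conversely, assume the second identity holds. I would form the multiplicative domain of the unital completely positive map $\Phi\Phi_{\widetilde{\varphi}}^*$, or more simply its fixed-point algebra. This map preserves $\widetilde{\varphi}$ and is faithful. Therefore its fixed points form a von Neumann algebra $\mathcal{A}$ carrying a $\widetilde{\varphi}$-preserving conditional expectation, obtained as the ergodic average in the spirit of Proposition \ref{prop:conditional expectation}. The cocycles $[D\varphi,D\widetilde{\varphi}]_t$ lie in $\mathcal{A}$, so by Connes' Radon--Nikodym theorem $\varphi$ is recovered from its restriction to $\mathcal{A}$. It follows that $\varphi\Phi\Phi_{\widetilde{\varphi}}^*=\varphi$, that is $\psi\Phi_{\widetilde{\varphi}}^*=\varphi$. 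Lemma \ref{lem:adjont bimodule map} says $\Phi_{\widetilde{\varphi}}^*$ is a bimodule quantum channel, so $\varphi$ is equivalent to $\psi$.

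The delicate step is the last implication: showing that the fixed points of $\Phi\Phi_{\widetilde{\varphi}}^*$ form an algebra with an invariant expectation, and that containing the cocycles forces invariance of $\varphi$.
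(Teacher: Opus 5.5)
Your proposal is correct. For the first identity it is exactly the paper's argument: the paper's proof is the single line ``Theorem \ref{thm:relative quasi entropy} together with \cite[Theorem 3]{Pet88},'' and it uses $\widetilde{\varphi}\Phi=\widetilde{\varphi}$ to identify Petz's cocycle condition with equality of $H_\beta(\cdot\|\widetilde{\varphi})$. The paper gets the second identity from the same citation, since Petz's theorem lists both cocycle identities among the conditions equivalent to sufficiency. You instead prove the second-identity part by hand, which is a genuinely different and more self-contained argument.

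Two remarks on that part. First, the right-hand side of the corollary is a disjunction, so your forward argument for the second identity is not needed once you have the first. If you want it anyway, justify the intertwining relation by applying the first-identity criterion to the channel $\Phi^*_{\widetilde{\varphi}}$ and the pair $\psi,\widetilde{\varphi}$. That channel is bimodule, hence faithful. Moreover $\psi\Phi^*_{\widetilde{\varphi}}=\varphi$, $\widetilde{\varphi}\Phi^*_{\widetilde{\varphi}}=\widetilde{\varphi}$, and $H_\beta(\varphi\|\widetilde{\varphi})=H_\beta(\psi\|\widetilde{\varphi})$.

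Second, the step you call delicate is easy in this setting.
\begin{itemize}
\item By Lemma \ref{lem:adjont bimodule map}, $T=\Phi\Phi^*_{\widetilde{\varphi}}$ is a bimodule quantum channel. Proposition \ref{prop:conditional expectation} then gives $\frac1n\sum_{k\leq n}T^k\to\mathcal{E}_{\mathcal{P}}$ for an intermediate subfactor $\mathcal{P}$. This $\mathcal{P}$ is exactly the fixed-point algebra of $T$, and $\mathcal{E}_{\mathcal{P}}T=\mathcal{E}_{\mathcal{P}}$. Also $\mathcal{E}_{\mathcal{P}}$ is trace-preserving, by irreducibility.
\item Write $\varphi=\tau(h\,\cdot)$. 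Then $\widetilde{\varphi}=\tau(\mathcal{E}_{\mathcal{N}}(h)\,\cdot)$ and $[D\varphi,D\widetilde{\varphi}]_t=h^{it}\mathcal{E}_{\mathcal{N}}(h)^{-it}$.
\item The hypothesis puts this cocycle in $\mathcal{P}$. Since $\mathcal{E}_{\mathcal{N}}(h)^{it}\in\mathcal{N}\subseteq\mathcal{P}$, it follows that $h^{it}\in\mathcal{P}$ for all $t$. Hence $h$ is affiliated with $\mathcal{P}$ and $\varphi\mathcal{E}_{\mathcal{P}}=\varphi$.
\item Therefore $\varphi T=\varphi\mathcal{E}_{\mathcal{P}}T=\varphi\mathcal{E}_{\mathcal{P}}=\varphi$, that is, $\psi\Phi^*_{\widetilde{\varphi}}=\varphi$.
\end{itemize}
So neither a general ergodic theorem nor Connes' converse Radon--Nikodym theorem is required.

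Your route exhibits the reverse channel directly, without passing through quasi-entropies. It also shows that the fixed-point subfactor $\mathcal{P}$ protects $\varphi$, so $\mathcal{P}_\varphi\subseteq\mathcal{P}$, which ties the corollary to the OA symmetry of Section \ref{sec:symmetries}. The paper's version is shorter but rests entirely on the cited list of equivalences in \cite{Pet88}.
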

\begin{proof}
    It follows from Theorem \ref{thm:relative quasi entropy} and \cite[Theorem 3]{Pet88}.
\end{proof}

\subsection{Free energy}\label{sec:free energy and entropy}
In \cite{LX18,LX20,Xu20}, Lango and Xu gave a rigorous explicit computation for relative entropy and entanglement entropy within the framework of quantum field theory  and conformal field theory by utilizing von Neumann algebras, modular theory, and operator theory.
The results connect relative entropy and index of subfactors, which is different but related to Pimsner-Popa result that connects Connes-St\o rmer entropy to index.
The relative entropy has a closed relation to free energy.

Ehrenfest \cite{Ehr33} classified the phase transitions based on the derivatives of the thermodynamic free energy.
A phase transition is first-order if the first derivative is discontinuous and is second-order if the first derivative is continuous.
The Landau free energy is a foundational concept in condensed matter physics and statistical mechanics that provides a powerful, phenomenological framework for understanding phase transitions \cite{Lan37,Dev49,Dev51,Dev54}.
Suppose $\beta=\dfrac{1}{k_BT}$, where $T$ is the temperature and $k_B$ is the Boltzman’s constant.
Suppose $H$ is the Hamiltonian of the system.
The Landau free energy $F$ is defined to be
\begin{align*}
    F=-\beta^{-1}\log{\rm Tr(e^{-\beta H}}).
\end{align*}

In this section, we will discuss the free energy in a subfactor and its relation with the index of the subfactor.
 Suppose $\mathcal{N}\subseteq\mathcal{M}$ is an irreducible finite inclusion of factors and $\varphi$ is a faithful normal state on $\mathcal{M}$.
 We recall that $\widetilde{\varphi}=\varphi\mathcal{E}_{\mathcal{N}}$,  $\Delta_{\widetilde{\varphi},\varphi}$ is the relative modular operator of  $\widetilde {\varphi}$ and $\varphi$, and $H_{\varphi}=-\log \Delta_{\widetilde{\varphi},\varphi}$ is the modular Hamiltonian.
The free energy $F_{\varphi}$ is defined as follows:
\begin{align*}
    F_{\varphi}&=-\beta^{-1}\log\langle e^{-\beta H_{\varphi}} \Omega_{\varphi},\Omega_{\varphi}\rangle\\
&=-\beta^{-1}\log\langle\Delta_{\widetilde{\varphi},\varphi}^{\beta}\Omega_{\varphi},\Omega_{\varphi}\rangle.
\end{align*}

The Araki relative entropy $H(\varphi\|\widetilde{\varphi})$ \cite{Ara75,Ara77} is defined as 
\begin{align*}
    H(\varphi\|\widetilde{\varphi})=-\langle \log \Delta_{\widetilde{\varphi},\varphi}\Omega_{\varphi},\Omega_{\varphi}\rangle.
\end{align*}
We have that $H(\varphi\|\widetilde{\varphi})\geq 0$.
One can obtain the relation between the free energy and relative entropy by differentiating the free energy with respect to $\beta$:
\begin{align*}
    \frac{dF_{\varphi}}{d\beta}=\beta^{-2}\log\langle e^{-\beta H_{\varphi}}\Omega_{\varphi},\Omega_{\varphi}\rangle-\beta^{-1} \frac{\langle\Delta_{\widetilde{\varphi},\varphi}^{\beta}\log\Delta_{\widetilde{\varphi},\varphi} \Omega_{\varphi},\Omega_{\varphi}\rangle}{\langle\Delta_{\widetilde{\varphi},\varphi}^{\beta} \Omega_{\varphi},\Omega_{\varphi}\rangle}.
\end{align*}
Evaluating it at $\beta=1$, we see that
\begin{align*}
    \frac{dF_{\varphi}}{d\beta}\bigg|_{\beta=1}=H(\varphi\|\widetilde{\varphi}).
\end{align*}
Let $\tau$ be the normal faithful trace on $\mathcal{M}$.
Suppose $\varphi(x)=\tau(h_{\varphi}x)$ and $\widetilde{\varphi}(x)=\tau(h_{\widetilde{\varphi}}x)$, where $h_{\varphi},h_{\widetilde{\varphi}}\in L^1(\mathcal{M})^+$ are density operators.
Then the relative entropy is equal to
\begin{align*}
    H(\varphi\|\widetilde{\varphi})=\tau(h_{\varphi}\log h_{\varphi}-h_{\varphi}\log h_{\widetilde{\varphi}}).
\end{align*}
By \cite{OP93} or \cite[Theorem 2.1 (3)]{Xu20} and Pimser-Popa inequality,    $H(\varphi\|\widetilde{\varphi})\leq \log[\mathcal{M}:\mathcal{N}]$.
Suppose $\mathcal{N}_{-1}\subseteq\mathcal{N}\subseteq^{e_{-1}}\mathcal{M}$ is the downward basic construction.
Let $\varphi(x)=[\mathcal{M}:\mathcal{N}]\tau(xe_{-1})$.
Then the inequality becomes equality.
So we have
\begin{align*}
    \sup_{\varphi\in\mathcal{S}(\mathcal{M})}H(\varphi\|\widetilde{\varphi})=\log[\mathcal{M}:\mathcal{N}],
\end{align*}
where $\mathcal{S}(\mathcal{M})$ is the space of normal states on $\mathcal{M}$.
We also refer to \cite[Theorem 3.1]{GJL20} for the sandwiched R\'{e}nyi relative entropy in the above equation.
If $\varphi$ is a symmetry-protected state with respect to the symmetry $\mathcal{P}$, then 
\begin{align*}
    H(\varphi\|\widetilde{\varphi})\leq \sup_{\psi\in\mathcal{S}(\mathcal{P})}H(\psi\|\widetilde{\psi})= \log[\mathcal{P}:\mathcal{N}].
\end{align*}

Combining the upper bound for the relative entropy, we see that the free energy has following close relation to the Jones index.
\begin{proposition}\label{prop:free energy}
    If $\varphi$ is a $\mathcal{P}$-protected state, we have that
\begin{align*}
    0\leq  \frac{dF_{\varphi}}{d\beta}\bigg|_{\beta=1}\leq\log[\mathcal{P}:\mathcal{N}].
\end{align*}
\end{proposition}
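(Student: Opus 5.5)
The statement has two inequalities: the lower bound is the nonnegativity of Araki relative entropy, and the upper bound is the relative entropy bound for $\mathcal{P}$-protected states established just above. I would therefore prove it in three steps.

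\textbf{Step 1: identify the derivative.} First I would confirm that $\beta\mapsto F_\varphi$ is differentiable near $\beta=1$. To do this, write
\begin{align*}
\langle\Delta_{\widetilde{\varphi},\varphi}^{\beta}\Omega_{\varphi},\Omega_{\varphi}\rangle=\int_0^\infty\lambda^{\beta}\,d\mu(\lambda),
\end{align*}
where $\mu$ is the spectral measure of $\Delta_{\widetilde{\varphi},\varphi}$ in the vector $\Omega_\varphi$. At $\beta=1$ this integral equals $\|\Delta^{1/2}_{\widetilde{\varphi},\varphi}\Omega_\varphi\|^2=\widetilde{\varphi}(\mathbf{1})=1$, which is assumed to be finite. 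Differentiation under the integral is then justified by dominated convergence, using $|\lambda^\beta\log\lambda|\leq C(\lambda^{1\pm\epsilon}+1)$. The Pimsner--Popa inequality $\widetilde{\varphi}\geq[\mathcal{M}:\mathcal{N}]^{-1}\varphi$ makes $\Delta_{\widetilde{\varphi},\varphi}$ bounded below, and it is also bounded above on the relevant spectral subspace, so all moments of $\mu$ are finite.

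With this in hand, evaluating the derivative formula displayed before the proposition at $\beta=1$ makes the logarithmic term vanish, because the log of the normalization is $\log 1=0$. This gives
\begin{align*}
\frac{dF_\varphi}{d\beta}\Big|_{\beta=1}=-\langle\log\Delta_{\widetilde{\varphi},\varphi}\Omega_\varphi,\Omega_\varphi\rangle=H(\varphi\|\widetilde{\varphi}).
\end{align*}

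\textbf{Step 2: lower bound.} Since $\varphi$ and $\widetilde{\varphi}$ are both states, the lower bound is the standard nonnegativity of Araki relative entropy, $H(\varphi\|\widetilde{\varphi})\geq 0$. I would derive it from Jensen's inequality applied to the concave function $\log$ and the probability measure $\mu$:
\begin{align*}
-\int\log\lambda\,d\mu\geq-\log\int\lambda\,d\mu=-\log\widetilde{\varphi}(\mathbf{1})=0.
\end{align*}

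\textbf{Step 3: upper bound.} For the upper bound I would use the meaning of $\mathcal{P}$-protection, namely $\varphi\mathcal{E}_{\mathcal{P}}=\varphi$. Then:
\begin{itemize}
\item Since $\mathcal{N}\subseteq\mathcal{P}$, we have $\mathcal{E}_{\mathcal{N}}=\mathcal{E}^{\mathcal{P}}_{\mathcal{N}}\mathcal{E}_{\mathcal{P}}$, so $\widetilde{\varphi}=\widetilde{\varphi}\mathcal{E}_{\mathcal{P}}$ as well.
\item Both states are therefore invariant under the conditional expectation $\mathcal{E}_{\mathcal{P}}$. Monotonicity of relative entropy applies in both directions: under restriction to $\mathcal{P}$, and under $\mathcal{E}_{\mathcal{P}}$ viewed as a channel from $\mathcal{P}$ back to $\mathcal{M}$. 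Hence $H(\varphi\|\widetilde{\varphi})=H(\varphi|_{\mathcal{P}}\|\widetilde{\varphi}|_{\mathcal{P}})$.
\item On $\mathcal{P}$, $\widetilde{\varphi}|_{\mathcal{P}}=\varphi|_{\mathcal{P}}\mathcal{E}^{\mathcal{P}}_{\mathcal{N}}$. The intermediate inclusion $\mathcal{N}\subseteq\mathcal{P}$ is again an irreducible finite subfactor, so the Pimsner--Popa bound used above for $\mathcal{N}\subseteq\mathcal{M}$ applies to it. This gives $H\leq\log[\mathcal{P}:\mathcal{N}]$.
\end{itemize}

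\textbf{Main obstacle.} The only real difficulty is the justification in Step 1 that the derivative exists and equals the relative entropy. Everything else follows from results already recalled in the paper.
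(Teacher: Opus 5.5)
Your outline follows the paper's own route: identify $\frac{dF_\varphi}{d\beta}|_{\beta=1}$ with $H(\varphi\|\widetilde{\varphi})$, prove nonnegativity, then reduce to $\mathcal{P}$ and apply Pimsner--Popa. Steps 2 and 3 are correct as statements about $H(\varphi\|\widetilde{\varphi})$. The gap is in Step 1, and the paper's text makes the same slip: when you evaluate at $\beta=1$ you drop the factor $\Delta^{\beta}$. In your own notation $\frac{d}{d\beta}\int\lambda^{\beta}\,d\mu=\int\lambda^{\beta}\log\lambda\,d\mu$, so
\[
\frac{dF_{\varphi}}{d\beta}\Big|_{\beta=1}=-\int_0^\infty\lambda\log\lambda\,d\mu(\lambda)=-\langle\Delta_{\widetilde{\varphi},\varphi}\log\Delta_{\widetilde{\varphi},\varphi}\,\Omega_{\varphi},\Omega_{\varphi}\rangle .
\]
By contrast, $H(\varphi\|\widetilde{\varphi})=-\int\log\lambda\,d\mu$ is the corresponding quantity at $\beta=0$, not at $\beta=1$.

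Jensen for the convex function $t\log t$, together with $\int\lambda\,d\mu=1$, gives $\int\lambda\log\lambda\,d\mu\geq 0$. Hence $\frac{dF_\varphi}{d\beta}|_{\beta=1}\leq 0$, with equality only if $\mu$ is the point mass at $1$, which forces $\varphi=\widetilde{\varphi}$. In the tracial picture the derivative equals $-\tau(h_{\widetilde{\varphi}}\log h_{\widetilde{\varphi}}-h_{\widetilde{\varphi}}\log h_{\varphi})$, which is minus the relative entropy with the two states swapped. So the lower bound in the statement is false, and no route can prove it. For example, take $\mathcal{N}\subseteq\mathcal{N}\rtimes\mathbb{Z}_2$ with self-adjoint implementing unitary $u$, and let $\varphi=\tau((1+\tfrac12 u)\,\cdot\,)$. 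This state is protected by $\mathcal{P}=\mathcal{M}$, $\widetilde{\varphi}=\tau$, and $\frac{dF_\varphi}{d\beta}|_{\beta=1}=\tau(\log(1+\tfrac12 u))=\tfrac12\log\tfrac34<0$.

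What your Steps 2 and 3 do establish can be attached to $F_\varphi$ correctly. Put $g(\beta)=\log\int\lambda^{\beta}\,d\mu$. It is convex with $g(0)=g(1)=0$, so $\beta^{2}F_\varphi'(\beta)=g(\beta)-\beta g'(\beta)\leq 0$. Thus $F_\varphi$ decreases on $(0,1]$ from $\lim_{\beta\to0^+}F_\varphi=-g'(0^+)=H(\varphi\|\widetilde{\varphi})$ to $F_\varphi(1)=0$. Your Steps 2 and 3 then give $0\leq F_\varphi(\beta)\leq H(\varphi\|\widetilde{\varphi})\leq\log[\mathcal{P}:\mathcal{N}]$ for $0<\beta\leq 1$, which is a correct replacement for the proposition.

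Separately, your differentiability argument in Step 1 is also unjustified. Pimsner--Popa gives $\varphi\leq[\mathcal{M}:\mathcal{N}]\,\widetilde{\varphi}$, which controls $\mu$ near $\lambda=0$; this is what makes $H(\varphi\|\widetilde{\varphi})$ finite. Nothing, however, bounds $\widetilde{\varphi}$ by a multiple of $\varphi$. So $\int\lambda^{\beta}\,d\mu$ can diverge for every $\beta>1$, and $F_\varphi$ need not be two-sided differentiable at $\beta=1$.
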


\section{Operator Algebra Symmetries of states}\label{sec:symmetries}
In this section,  we introduce the operator algebra (OA) symmetry of a state, which generalizes the group symmetry in Landau phase transition theory.
We define an equivalence relation among OA symmetries via bishifts of biprojections. 
Based on this equivalence relation, we obtain a comparison of projections equipped with $\mathfrak{F}$-positivity.

\subsection{Operator algebra symmetry}
Suppose $\mathcal{N}\subseteq\mathcal{M}$ is an irreducible finite inclusion of factors.
The subalgebra $\mathcal{N}$ can be viewed as a symmetry of the quantum system $\mathcal{N}\subseteq\mathcal{M}$. 
It is then reasonable to consider the subalgebras of $\mathcal{M}$ containing $\mathcal{N}$ as a symmetry of $\mathcal{N}\subseteq\mathcal{M}$.
Suppose $\mathcal{P}$ is an intermediate von Neumann algebra for  $\mathcal{N}\subseteq\mathcal{M}$. 
Note that $\mathcal{N}'\cap\mathcal{M}=\mathbb{C}\mathbf{1}$ and $\mathcal{N},\mathcal{M}$ are factors.
We see that $\mathcal{P}$ is an intermediate subfactor.
We denote by $\mathcal{E}_{\mathcal{P}}$ the  conditional expectation from $\mathcal{M}$ onto $\mathcal{P}$.

Thanks to Jones subfactor theory and quantum Fourier analysis, we have the following definition for the symmetry of $\mathcal{N}\subseteq\mathcal{M}$.
\begin{definition}(Symmetries)
    Suppose $\mathcal{N}\subseteq\mathcal{M}$ is an irreducible finite inclusion of factors.
    We view the intermediate subfactors as the {\sl symmetries} for $\mathcal{N}\subseteq\mathcal{M}$.
    Suppose $P$ is the biprojection associated to the intermediate subfactor $\mathcal{P}$.
    We equivalently view $P$ as a symmetry.
    Suppose $\mathcal{Q}$ is another symmetry.
    We say $\mathcal{P}$ is a {\sl subsymmetry} of $\mathcal{Q}$ if $\mathcal{P}\subseteq\mathcal{Q}$.
\end{definition}
Next we introduce a relation between two symmetries.
\begin{definition}
    Suppose $\mathcal{N}\subseteq\mathcal{M}$ is an irreducible finite inclusion of factors.
    Suppose $P,Q$ are two biprojections.
    We say $P,Q$ are equivalent if there exists a bishift of biprojection $V$ such that $V^*V=P$ and $VV^*=Q$, denoted by $P\sim Q$.
    In this case, $P$ and $Q$ are {\sl equivalent symmetries}.
    We say $P$ is a {\sl lower symmetry} of $Q$ if $P$ is equivalent to a subsymmetry of $Q$, denoted by $P\preceq Q$.
    We use $P\prec Q$ to denote $P\preceq Q$ and $P$ is not equivalent to $Q$.
\end{definition}
\begin{proposition}\label{prop:partial order}
    The relation ``$\preceq$" is a partial order.
    That is
    \begin{enumerate}[(i)]
        \item $P\preceq P$,
        \item $P\preceq Q$ and $Q\preceq P$ implies $P\sim Q$,
        \item $P\preceq Q$ and $Q\preceq R$ implies $P\preceq R$,
    \end{enumerate}
    where $P,Q,R$ are symmetries.
\end{proposition}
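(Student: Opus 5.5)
We will verify the three properties directly from the definitions, the key input being that bishifts of biprojections are closed under the operations we need: adjoints, products, and restriction to sub-biprojections. This closure is supplied by Theorem \ref{thm:bishift} together with the structure results of \cite{JLW16}.

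Reflexivity (i) is immediate. A biprojection $P$ is itself a bishift of a biprojection, with trivial shifts (in \cite{JLW16} take $P_g=P$, $\widetilde P_h=\widetilde P$ and $y$ suitable), and $P^*P=PP^*=P$. Hence $P\sim P$, and $P$ is a subsymmetry of itself, so $P\preceq P$.

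For transitivity (iii), suppose $P\sim P'$ with $P'\leq Q$ via a bishift $V$, and $Q\sim Q'$ with $Q'\leq R$ via a bishift $W$. The natural candidate is $U=WV$. Then $U^*U=V^*(W^*W)V=V^*QV=V^*P'V=P$, since $P'\leq Q$ and $VV^*=P'$. Also $UU^*=WP'W^*$ is a projection below $WW^*=Q'\leq R$.

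Two things remain to check. First, $U$ must be a bishift of a biprojection. Second, $WP'W^*$ must be a biprojection, corresponding to an intermediate subfactor of $\mathcal{R}$. For the first, we will use characterization (4) of Theorem \ref{thm:bishift}. We check that $U$ is a partial isometry, which is clear from the computation above, and that $\mathfrak{F}^{-1}(U)$ is extremal. For extremality, we aim to show $\mathcal{S}(U)\mathcal{S}(\mathfrak{F}(U))=[\mathcal{M}:\mathcal{N}]$ via (1). Here $\mathcal{S}(U)=tr(P)$, and $\mathfrak{F}(U)$ should have support controlled by the shift of $\widetilde{P}$ coming from $\mathfrak{F}(V)$. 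The cleanest route is to write $V$ and $W$ in the explicit bishift form $\mathfrak{F}(\widetilde P_h)\ast(yP_g)$ and use the shift calculus of \cite{JLW16}: left or right multiplication of a biprojection shift by a partial isometry that is a bishift gives again a shift of the conjugated biprojection. In particular, $WP'W^*$ is a shift conjugate of $P'$, and a shift of a biprojection whose range is a projection is a biprojection.

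For antisymmetry (ii), suppose $P\sim P_1\leq Q$ and $Q\sim Q_1\leq P$. Traces are preserved under $\sim$ because $tr(V^*V)=tr(VV^*)$. This gives $tr(P)=tr(P_1)\leq tr(Q)=tr(Q_1)\leq tr(P)$, so all these traces are equal. Equal trace together with $P_1\leq Q$ forces $P_1=Q$, and thus $P\sim Q$ directly via the given bishift. This step is routine.

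The main obstacle is transitivity: proving that a product of two composable bishifts is again a bishift, and that conjugating a sub-biprojection by a bishift yields a biprojection. I expect to handle it by transporting the lattice of intermediate subfactors along $W$. Right shifts $P_g$ correspond to $\mathcal{N}$-$\mathcal{P}$ bimodule translates, and the conjugation $x\mapsto WxW^*$ restricted to biprojections under $W^*W$ should correspond to an isomorphism of intermediate lattices between $\mathcal{Q}$ and $\mathcal{Q}'$. I would verify this first in the group case, where $V$ has the form $c\sum_{h\in H}\chi(h)hg$ and everything reduces to conjugating subgroups, and then mimic that argument with Fourier duality in the general case.
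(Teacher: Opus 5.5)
Parts (i) and (ii) are fine, and (ii) is exactly the paper's argument (equal traces, $VV^*\leq Q$, faithfulness of $tr_2$). In (iii), your computation $U^*U=V^*QV=V^*P'V=P$ and $UU^*=WP'W^*\leq Q'\leq R$ for $U=WV$ is also correct. The gap is that neither of the two remaining claims is actually established.

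\textbf{The bishift property of $U$.} You propose to verify the Donoho--Stark equality (1). The uncertainty principle only gives $\mathcal{S}(U)\mathcal{S}(\mathfrak{F}(U))\geq[\mathcal{M}:\mathcal{N}]$, so you would need an upper bound on the support of $\mathfrak{F}(WV)$, which is a convolution of $\mathfrak{F}(W)$ and $\mathfrak{F}(V)$. You give no such bound. The ``shift calculus'' you invoke is an unproved assertion that is essentially equivalent to the claim itself. The missing idea is to work with norms rather than supports. Set $\delta=[\mathcal{M}:\mathcal{N}]^{1/2}$ and use four inputs: the quantum Hausdorff--Young inequality, the fact that $\mathfrak{F}^{-1}$ turns products into convolutions, the quantum Young inequality, and extremality of $\mathfrak{F}^{-1}(V)$ and $\mathfrak{F}^{-1}(W)$ (Theorem~\ref{thm:bishift}, (5)$\Rightarrow$(4)). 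Together they give
\[
\delta=\delta\|WV\|_\infty\leq\|\mathfrak{F}^{-1}(WV)\|_1=\|\mathfrak{F}^{-1}(V)\ast\mathfrak{F}^{-1}(W)\|_1\leq\frac{\|\mathfrak{F}^{-1}(V)\|_1\,\|\mathfrak{F}^{-1}(W)\|_1}{\delta}=\delta\|V\|_\infty\|W\|_\infty=\delta .
\]
So $\mathfrak{F}^{-1}(WV)$ is extremal. Since $WV$ is a partial isometry, (4)$\Rightarrow$(5) makes it a bishift. This is how the paper does it.

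\textbf{The biprojection property of $WP'W^*$.} Your justification rests on a false principle: a projection that is a shift (or bishift) of a biprojection need not be a biprojection. Take $\mathcal{M}^G\subseteq\mathcal{M}$ and a nontrivial character $\chi$ of a subgroup $H$. The element $\frac{1}{|H|}\sum_{h\in H}\chi(h)h\in\mathcal{L}(G)$ is a projection of the form $c\sum_{h\in H}\chi(h)hg$ recalled after Theorem~\ref{thm:bishift} (take $g$ trivial), hence a bishift. Yet it is orthogonal to $e_{\mathcal{N}}=\frac{1}{|G|}\sum_{g\in G}g$, which every biprojection dominates, so it is not a biprojection. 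Checking the group case and then ``mimicking with Fourier duality'' is a plan, not an argument.

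The paper's proof also just asserts that $WV(WV)^*$ is a biprojection, so you were right to flag this step, but it needs a real reason. One argument works when $W$ is $\mathfrak{F}$-positive with $\|\mathfrak{F}(W)\|_1=\delta$, which is the situation of the later sections:
\begin{enumerate}
\item Write $\mathcal{Q},\mathcal{Q}',\mathcal{P}'$ for the intermediate subfactors of $Q=W^*W$, $Q'=WW^*$ and $P'\leq Q$. Then $\Phi_W$ and $\Phi_{W^*}$ are bimodule quantum channels.
\item They satisfy $\Phi_W=\mathcal{E}_{\mathcal{Q}'}\Phi_W$, $\Phi_{W^*}\Phi_W=\mathcal{E}_{\mathcal{Q}}$ and $\Phi_W\Phi_{W^*}=\mathcal{E}_{\mathcal{Q}'}$. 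Hence $\Phi_W|_{\mathcal{Q}}\colon\mathcal{Q}\to\mathcal{Q}'$ is a unital completely positive bijection with unital completely positive inverse.
\item Applying the Schwarz inequality to both maps forces $\Phi_W|_{\mathcal{Q}}$ to be a $*$-isomorphism fixing $\mathcal{N}$.
\item Therefore $\Phi_W(\mathcal{P}')$ is an intermediate subfactor. The projection $WP'W^*$ projects onto $WL^2(\mathcal{P}')=L^2(\Phi_W(\mathcal{P}'))$, so it is the biprojection of $\Phi_W(\mathcal{P}')$.
\end{enumerate}
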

\begin{proof}
    (i) is clear.
    (ii) Suppose $V$ is a  bishift of biprojection such that $V^*V=P, VV^*\leq Q$.
    By the condition, $tr_2(P)=tr_2(Q)$.
    Hence, $VV^*=Q$  by the faithfulness of $tr_2$.
    We have $P$ is equivalent to $Q$.
    (iii) Suppose $V,W$ are bishifts of biprojections such that $V^*V=P, VV^*\leq Q$ and $W^*W=Q,WW^*\leq R$.
Then $WV$ is a partial isometry such that $(WV)^*WV=P$ and $WV(WV)^*\leq R$.
By quantum Hausdorff-Young inequality and quantum Young inequality in \cite{JLW16} and Theorem \ref{thm:bishift}, we have
\begin{align*}
   \delta= \delta\|WV\|_{\infty}\leq\|\mathfrak{F}^{-1}(WV)\|_1&=\|\mathfrak{F}^{-1}(V)\ast\mathfrak{F}^{-1}(W)\|_1\\
   &\leq \frac{\|\mathfrak{F}^{-1}(V)\|_1\|\mathfrak{F}^{-1}(W)\|_1}{\delta}=\delta\|V\|_{\infty}\|W\|_{\infty}=\delta.
\end{align*}
Hence $\mathfrak{F}^{-1}(WV)$ is extremal.
Thus $WV$ is a bishift of a biprojection by Theorem \ref{thm:bishift}.
Moreover, $WV(WV)^*\leq R$ is a biprojection.
    Therefore, $P\preceq R$.
\end{proof}
\begin{remark}
  Note that  an $\mathfrak{F}$-positive partial isometry is a bishift of biprojection and an $\mathfrak{F}$-positive projection is a biprojection.
  Hence, Proposition \ref{prop:partial order} provides a comparison of projections equipped with $\mathfrak{F}$-positivity.
\end{remark}

With the symmetries of the quantum system, we introduce the states preserved by the symmetries.
\begin{definition}\label{def:Symmetry Protected States}(Symmetry-Protected States)
    Suppose $\mathcal{N}\subseteq\mathcal{M}$ is an irreducible finite inclusion of factors, $\varphi$ is a normal state on $\mathcal{M}$.
    Suppose $\mathcal{P}$ is an intermediate subfactor.
    We say $\varphi$ is {\sl protected} by the symmetry $\mathcal{P}$ if $\varphi\mathcal{E}_{\mathcal{P}}=\varphi$.
\end{definition}
Next we will introduce the symmetries of states.
Suppose $\mathcal{M}$ is a finite von Neumann algebra with a normal faithful trace $\tau$.
Let $\varphi$ be a normal state on $\mathcal{M}$.
We first recall that the symmetry of $\varphi$ is the following von Neumann algebra:
\begin{align*}
    \mathcal{A}_{\varphi}:=\{u\in\mathcal{M}: \varphi(u^*xu)=\varphi(x), x\in\mathcal{M} \}''.
\end{align*}
In particular, when $\varphi=\tau$, then $\mathcal{A}_{\tau}=\mathcal{M}$.
The density operator $h_{\varphi}$ is given by $\varphi(x)=\tau(h_{\varphi}x)$, $x\in\mathcal{M}$, which is an element in $L^1(\mathcal{M})$.
We have that 
\begin{align*}
    \mathcal{A}_{\varphi}=\{h_{\varphi}\}'\cap \mathcal{M}.
\end{align*}
Let $\mathcal{E}_{\mathcal{A}_{\varphi}}$ be the trace-preserving conditional expectation from $\mathcal{M}$ onto $\mathcal{A}_{\varphi}$.
Then 
\begin{align*}
    \varphi=\varphi\mathcal{E}_{\mathcal{A}_{\varphi}}.
\end{align*}
Using this equality, we can generalize the symmetry to subfactors.
Suppose $\mathcal{N}\subseteq\mathcal{M}$ is a finite inclusion of  factors.
If $\varphi|_{\mathcal{N}}=\tau|_{\mathcal{N}}$, then the symmetry $\mathcal{A}_{\varphi}$ is an intermediate subfactor of $\mathcal{N}\subseteq\mathcal{M}$.
We introduce the following definition. 
\begin{definition}(Operator Algebra Symmetries of States)\label{def:OA symmetry}
Suppose $\mathcal{N}\subseteq\mathcal{M}$ is an irreducible finite inclusion of factors, $\varphi$ is a normal state on $\mathcal{M}$.
Let $\mathcal{N}\subseteq\mathcal{P}_{\varphi}\subseteq\mathcal{M}$ be the minimal intermediate subfactor such that $\varphi$ is protected by $\mathcal{P}_{\varphi}$.
We call $\mathcal{P}_{\varphi}$ the {\sl operator algebra (OA) symmetry} of the state $\varphi$.
\end{definition}
\begin{remark}
    Suppose $\{\mathcal{P}_{i}\}_{i\in I}$ is the set of all symmetries  such that $\varphi\mathcal{E}_{\mathcal{P}_i}=\varphi$.
    Then the symmetry of $\varphi$ is given by $\mathcal{P}_{\varphi}=\bigcap_{i\in I}\mathcal{P}_{i}$.
\end{remark}
\begin{remark}\label{rem:group symmetry}
    Symmetries of states generalize the group symmetries.
Let $G$ be a finite group with an out action $\alpha$ on $\mathcal{M}$.
Let $\mathcal{N}=\mathcal{M}^G$ be the fixed points algebra.
Then $\mathcal{N}\subseteq\mathcal{M}$ is an irreducible subfactor with index $|G|$.
For any $g\in G$ and normal state $\varphi$, define $g\varphi(x)=\varphi(\alpha_g(x))$, $x\in\mathcal{M}$.
Let $H<G$ be a subgroup.
Then $g\varphi=\varphi$, $\forall g\in H$, if and only if $\varphi \mathcal{E}_{\mathcal{M}^H}=\varphi$, where $\mathcal{M}^H$ is the  symmetry of $\varphi$.

\end{remark}

\begin{remark}
  Noether's theorem states that  every continuous symmetry of the action of a physical system corresponds to a conserved quantity.
  In this paper, we use a bimodule quantum channel to represent the action and consider the algebra generated by conserved quantities, which can be regarded as the generalization of Noether's theorem.
\end{remark}

In terms of Hamiltonians, we see that the modular Hamiltonian is invariant under the corresponding symmetries.
\begin{proposition}
     Suppose $\mathcal{N}\subseteq\mathcal{M}$ is an irreducible finite inclusion of factors, $\varphi$ is the ground state on $\mathcal{M}$ protected by a symmetry $P$.
     Then 
     \begin{align*}
         PH_{\varphi}\subset H_{\varphi}P,
     \end{align*}
i.e. the modular Hamiltonian $H_{\varphi}$ is invariant under the symmetry $P$.
\end{proposition}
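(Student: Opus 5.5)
We have to prove that $PH_\varphi\subset H_\varphi P$. Here $P$ is interpreted as the operator on $L^2(\mathcal{M},\varphi)$ (or on the standard space) implementing the conditional expectation $\mathcal{E}_{\mathcal{P}}$, i.e. the Jones projection $x\Omega\mapsto \mathcal{E}_{\mathcal{P}}(x)\Omega$. The plan is to reduce the statement to a commutation relation between $P$ and the relative modular operator $\Delta_{\widetilde{\varphi},\varphi}$. Since $H_\varphi=-\log\Delta_{\widetilde{\varphi},\varphi}$, functional calculus then gives the claim.

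In fact, if a bounded operator $P$ satisfies $P\Delta^{it}=\Delta^{it}P$ for all $t\in\mathbb{R}$, then $P$ commutes with every Borel function of $\Delta$. In particular $P\log\Delta\subset\log\Delta\, P$, which is the asserted inclusion. So the whole task is to show that $P$ commutes with the unitary group $\Delta_{\widetilde{\varphi},\varphi}^{it}$.

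First step: mimic the proof of Lemma \ref{lem:adjont bimodule map}.
1. Because $\varphi\mathcal{E}_{\mathcal{P}}=\varphi$, the map $x\Omega_\varphi\mapsto\mathcal{E}_{\mathcal{P}}(x)\Omega_\varphi$ extends to a self-adjoint projection. The argument is the one given there for $\mathcal{E}_{\mathcal{N}}$: we get $\varphi(\mathcal{E}_{\mathcal{P}}(x)y^*)=\varphi(x\mathcal{E}_{\mathcal{P}}(y)^*)$, using the bimodule property of $\mathcal{E}_{\mathcal{P}}$ and its invariance of $\varphi$.
2. Since $\mathcal{E}_{\mathcal{P}}$ is $*$-preserving, we get $PS_\varphi\subset S_\varphi P$.
3. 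Taking adjoints gives $PS_\varphi^*\subset S^*_\varphi P$. Hence $P$ commutes with $\Delta_\varphi$, so with $\Delta_\varphi^{it}$, and therefore $\sigma^\varphi_t(\mathcal{P})=\mathcal{P}$.

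Second step: handle $\widetilde{\varphi}$ and the relative operator. Since $\mathcal{N}\subseteq\mathcal{P}$, we have $\mathcal{E}_{\mathcal{N}}\mathcal{E}_{\mathcal{P}}=\mathcal{E}_{\mathcal{N}}$, so $\widetilde{\varphi}\mathcal{E}_{\mathcal{P}}=\widetilde{\varphi}$ as well. Then I would use the Connes cocycle:
$$\Delta_{\widetilde{\varphi},\varphi}^{it}=[D\widetilde{\varphi},D\varphi]_t\,\Delta_\varphi^{it}.$$
The aim is to show $[D\widetilde{\varphi},D\varphi]_t\in\mathcal{P}$. Since both states are $\mathcal{E}_{\mathcal{P}}$-invariant, Takesaki's theorem and Connes' cocycle theorem give that the cocycle of $\varphi\mathcal{E}_{\mathcal{P}}$ relative to $\widetilde{\varphi}\mathcal{E}_{\mathcal{P}}$ equals the cocycle of the restrictions to $\mathcal{P}$. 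That cocycle lies in $\mathcal{P}$.

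Left multiplication by an element $u\in\mathcal{P}$ commutes with $P$ on the range, because $\mathcal{E}_{\mathcal{P}}(ux)=u\mathcal{E}_{\mathcal{P}}(x)$. Combining this with the first step yields $P\Delta^{it}_{\widetilde{\varphi},\varphi}=\Delta^{it}_{\widetilde{\varphi},\varphi}P$, and the functional calculus remark above finishes the proof.

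The main obstacle is the faithfulness and identification issue. The proposition says "ground state" and not "faithful state", and $\Delta_{\widetilde{\varphi},\varphi}$ is defined in this section only for faithful $\varphi$. If $\varphi$ is not faithful, the relative modular operator lives on the support, and I would first check that the support projection of $\varphi$ commutes with $P$. I expect this because $\varphi\mathcal{E}_{\mathcal{P}}=\varphi$ forces $\mathrm{supp}(\varphi)\in\mathcal{P}$. With that settled, I would run the argument on the reduced space. A second point to verify is the identification of $\mathcal{E}_{\mathcal{P}}$ with the minimal conditional expectation tied to the biprojection. In the irreducible case the conditional expectation onto $\mathcal{P}$ is unique, so this is automatic.
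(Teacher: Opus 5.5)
Your argument is correct, but it takes a longer route than the paper. The paper applies your first-step argument directly to the relative Tomita operator. Because both $\varphi$ and $\widetilde{\varphi}$ are $\mathcal{E}_{\mathcal{P}}$-invariant, $P$ implements $\mathcal{E}_{\mathcal{P}}$ on both $\mathcal{M}\Omega_{\varphi}$ and $\mathcal{M}\Omega_{\widetilde{\varphi}}$. Hence $S_{\widetilde{\varphi},\varphi}Px\Omega_{\varphi}=\mathcal{E}_{\mathcal{P}}(x^*)\Omega_{\widetilde{\varphi}}=PS_{\widetilde{\varphi},\varphi}x\Omega_{\varphi}$ on the core. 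This gives $PS_{\widetilde{\varphi},\varphi}\subset S_{\widetilde{\varphi},\varphi}P$, then the same for the adjoint, hence $P\Delta_{\widetilde{\varphi},\varphi}\subset\Delta_{\widetilde{\varphi},\varphi}P$, and functional calculus finishes.

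You instead commute $P$ with the ordinary modular operator $\Delta_{\varphi}$ and then pass to $\Delta_{\widetilde{\varphi},\varphi}^{it}=[D\widetilde{\varphi},D\varphi]_t\Delta_{\varphi}^{it}$. For this you invoke the theorem that the Connes cocycle of two $\mathcal{E}_{\mathcal{P}}$-invariant faithful states equals the cocycle of their restrictions to $\mathcal{P}$, so it lies in $\mathcal{P}\subseteq\{P\}'$.

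The paper's route is more elementary. Once you note, as you do, that $\widetilde{\varphi}\mathcal{E}_{\mathcal{P}}=\widetilde{\varphi}$, the closability argument of your first step already works for $S_{\widetilde{\varphi},\varphi}$, and the cocycle theorem is unnecessary. Your route, in exchange, uses only how $P$ acts on $\mathcal{M}\Omega_{\varphi}$ together with $P\in\mathcal{P}'$. It also yields the additional structural facts $\sigma^{\varphi}_t(\mathcal{P})=\mathcal{P}$ and $[D\widetilde{\varphi},D\varphi]_t\in\mathcal{P}$.

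On faithfulness: the paper defines $H_{\varphi}$ only for faithful $\varphi$, and its proof tacitly works in that setting. You can therefore drop the support-reduction detour.
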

\begin{proof}
    Let $\mathcal{P}$ be the intermediate subfactor associated to the biprojection $P$ and $\mathcal{E}_{\mathcal{P}}$ the conditional expectation onto $\mathcal{P}$.
    Then for any $x\in\mathcal{M}$, we have that
    \begin{align*}
        S_{\widetilde{\varphi},\varphi}Px\Omega_{\varphi}&=\mathcal{E}_{\mathcal{P}}(x^*)\Omega_{\widetilde{\varphi}}\\
        &=P S_{\widetilde{\varphi},\varphi}x\Omega_{\varphi},
    \end{align*}
    where $\mathcal{E}_{\mathcal{P}}(x)\Omega_{\varphi}=Px\Omega_{\varphi}$ and $S_{\widetilde{\varphi},\varphi}x\Omega_{\varphi}=x^*\Omega_{\widetilde{\varphi}}$.
    We then obtain that $PS_{\widetilde{\varphi},\varphi}\subset S_{\widetilde{\varphi},\varphi}P$ and $PS_{\widetilde{\varphi},\varphi}^*\subset S_{\widetilde{\varphi},\varphi}^*P$.
    Hence $P\Delta_{\widetilde{\varphi},\varphi}\subset\Delta_{\widetilde{\varphi},\varphi}P$. 
    Finally, we see that $PH_{\varphi}\subset H_{\varphi}P$.
\end{proof}

\section{Finiteness of equivalent states in a phase}\label{sec:finiteness of phases}
In classical phase transition, the phase usually contains infinitely many states due to conjugations of unitaries.
In our settings, when the inclusion $\mathcal{N}\subseteq\mathcal{M}$ is an irreducible finite inclusion of factors, we discover a new phenomenon that the number of states in a phase is finite and bounded by an index-dependent constant.

Let $\varphi$ be a normal state on $\mathcal{M}$.
Let $[\varphi]$ be the set of all normal states that are equivalent to $\varphi$.
The following theorem is the main result in this section.
\begin{theorem}\label{thm:phase finite upper bound}
    Suppose $\mathcal{N}\subseteq\mathcal{M}$ is an irreducible finite inclusion of factors and $\varphi$ is a normal state.
    Then $\#[\varphi]\leq [\mathcal{M}:\mathcal{N}]2^{\lfloor [\mathcal{M}:\mathcal{N}] \rfloor-1}$.
\end{theorem}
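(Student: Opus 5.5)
Fix $\varphi$ and let $\psi\in[\varphi]$, with bimodule quantum channels $\Phi,\Psi$ satisfying $\varphi\Phi=\psi$ and $\psi\Psi=\varphi$. The plan is to show that $\psi$ is determined by two discrete pieces of data: an intermediate subfactor from the finite lattice $\mathcal{L}(\mathcal{N}\subseteq\mathcal{M})$, and a label from a set of size at most $[\mathcal{M}:\mathcal{N}]$. The bound then follows from the lattice estimate $\#\mathcal{L}(\mathcal{N}\subseteq\mathcal{M})\leq C\leq 2^{\lfloor[\mathcal{M}:\mathcal{N}]\rfloor-1}$.

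\emph{First step: a canonical reversible channel.} The composition $\Psi\Phi$ fixes $\varphi$, so every Cesàro average fixes $\varphi$. By Proposition \ref{prop:conditional expectation},
\[
\frac1n\sum_{k=1}^n(\Phi\Psi)^k\to\mathcal{E}_{\mathcal{R}}
\]
for some intermediate subfactor $\mathcal{R}$, and $\varphi\mathcal{E}_{\mathcal{R}}=\varphi$. Hence $\mathcal{P}_\varphi\subseteq\mathcal{R}$, and likewise for $\psi$. I would then replace $\Phi$ by $\Phi'=\mathcal{E}_{\mathcal{P}_\varphi}\Phi\,\mathcal{E}_{\mathcal{P}_\psi}$, which still maps $\varphi$ to $\psi$. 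The goal is to show, via the relations \eqref{eq:relation between two conditional expectation} and the extremality characterization in Theorem \ref{thm:bishift}, that the element of $\mathcal{N}'\cap\mathcal{M}_1$ attached to this reversible channel is, up to scaling, a bishift $V$ of a biprojection with $V^*V=P_\varphi$ and $VV^*=P_\psi$. This is the content of Theorem \ref{thm:main theorem 2}.

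The extremality argument runs as follows. Since $\varphi\Phi'\Psi'=\varphi$ forces $\Phi'\Psi'$ to act as the identity on the relevant support, the quantum Young inequality chain (as in the proof of Proposition \ref{prop:partial order}) must be an equality. Theorem \ref{thm:bishift} then gives that $\mathfrak{F}^{-1}$ of the relevant element is extremal.

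\emph{Second step: counting.} Set $P=P_\varphi$. With $\mathcal{P}_\psi$ ranging over the lattice, I would count how many bishifts $V$ of biprojections can have $V^*V=P$ up to phase and up to the action on $\varphi$. This should amount to checking that:
\begin{itemize}
\item $\psi=\varphi\Phi_V$ depends only on the ``shift'' data of $V$, namely the right shifts $P_g$ and $\widetilde{P}_h$ in the definition. The factor $y$ and the multiplication by $P_\varphi$ are absorbed because $\varphi$ is $\mathcal{P}_\varphi$-protected.
\item The number of inequivalent shifts is at most $[\mathcal{M}:\mathcal{N}]$. I would argue this by trace: the right shifts of a biprojection $Q$ are mutually orthogonal-like translates of trace $tr_2(Q)$ inside a space of total trace $\delta^2=[\mathcal{M}:\mathcal{N}]$, so there are at most $[\mathcal{M}:\mathcal{N}]/tr_2(Q)\cdot(\text{Fourier side})$ of them. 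The uncertainty principle $\mathcal{S}(V)\mathcal{S}(\mathfrak{F}(V))=[\mathcal{M}:\mathcal{N}]$ should bound the product of the two counts by $[\mathcal{M}:\mathcal{N}]$.
\end{itemize}
Putting these together gives $\#[\varphi]\leq\#\mathcal{L}\cdot[\mathcal{M}:\mathcal{N}]\leq[\mathcal{M}:\mathcal{N}]2^{\lfloor[\mathcal{M}:\mathcal{N}]\rfloor-1}$.

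\emph{Main obstacle.} I expect the hardest part to be the injectivity-type claim that $\psi$ is determined by $(\mathcal{P}_\psi,\text{shift class of }V)$. A priori, many channels map $\varphi$ to $\psi$, and one must show that the reachable reversible images form a finite orbit. My first attempt would use the relative quasi-entropy criterion of Theorem \ref{thm:relative quasi entropy}, which gives $\Phi^*_\varphi=\Phi^*_{\widetilde\varphi}$. From this I would deduce that $\Phi$ restricted to the support behaves like the partial isometry $\Phi_V$, whose action on $\mathcal{P}_\varphi$-protected states is a ``translation'' by a finite set of shifts. As a consistency check, in the group case this reproduces $\#[\varphi]\leq|G|$, since the shifts are cosets $Hg$ and characters $\chi$.
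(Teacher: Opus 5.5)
Your first step is fine provided you invoke the full Theorem~\ref{thm:state equivalent} and not just $\mathcal{P}_\varphi\sim\mathcal{P}_\psi$. That theorem gives an $\mathfrak{F}$-positive bishift $V$ with $\|\mathfrak{F}(V)\|_1=[\mathcal{M}:\mathcal{N}]^{1/2}$, source projection $P_\varphi$, and $\psi=\varphi\Phi_V$. (Your own sketch of this step is not an argument: ``$\Phi'\Psi'$ acts as the identity on the support'' plus an equality in the Young chain does not establish it. The paper instead shows that $P_\varphi\Phi=P_\varphi\Psi^*$ is a partial isometry and identifies it with $P_\varphi V_\Phi$.)

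Once you have this, the ``main obstacle'' you name is not an obstacle. $\psi$ is a function of $V$, and $\mathfrak{F}$-positivity together with the normalization of $\|\mathfrak{F}(V)\|_1$ already removes any scalar ambiguity. So $\#[\varphi]$ is at most the number of such $V$, which is Corollary~\ref{cor:number of phase}. Passing to ``shift data'' is unnecessary, and your justification for it does not work: $\mathcal{P}_\varphi$-protection only lets you replace $\Phi$ by $\mathcal{E}_{\mathcal{P}_\varphi}\Phi$ and says nothing about the factor $y$. The detour through Theorem~\ref{thm:relative quasi entropy} is also unavailable, since that criterion assumes $\varphi,\psi$ faithful, whereas the finiteness statement is for arbitrary normal states.

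The genuine gap is the count itself. For a fixed target biprojection $Q$ you must bound the number of normalized $\mathfrak{F}$-positive bishifts $V$ with $V^*V=P_\varphi$ and $VV^*=Q$. Your argument is only a heuristic: right shifts are ``orthogonal-like translates'', so there are at most $[\mathcal{M}:\mathcal{N}]/tr_2(Q)$ of them, and the uncertainty principle ``should'' bound the product of the two counts. To make it work you would need two facts, and you prove neither:
\begin{enumerate}
\item distinct right shifts of a biprojection, and of its Fourier dual, are mutually orthogonal;
\item a pair of shifts determines $V$.
\end{enumerate}

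The paper avoids shifts entirely, using two ideas.
\begin{enumerate}
\item Fix one such $W$. The map $V\mapsto W^*V$ is a bijection onto $\mathfrak{S}_2=\{V:\ V^*V=VV^*=P_\varphi\}$ (normalized, $\mathfrak{F}$-positive). The quantum Schur product theorem preserves $\mathfrak{F}$-positivity and the $1$-norm, and Theorem~\ref{thm:bishift} keeps $W^*V$ a bishift. This gives $\#[\varphi]\le\#\mathfrak{S}_2\cdot\#\mathfrak{S}_3$ with $\#\mathfrak{S}_3\le\#\mathcal{L}(\mathcal{N}\subseteq\mathcal{M})$.
\item By Lemma~\ref{lem:bishift number}, each $V\in\mathfrak{S}_2$ is a unitary in the planar algebra of $\mathcal{N}\subseteq\mathcal{P}_\varphi$. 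The Donoho--Stark equality $\mathcal{S}(V)\mathcal{S}(\mathfrak{F}(V))=[\mathcal{P}_\varphi:\mathcal{N}]$ then forces its Fourier transform there to be a positive multiple of a trace-one (group-like) projection in $\mathcal{P}_\varphi'\cap(\mathcal{P}_\varphi)_2$. Distinct such projections are orthogonal by irreducibility, so there are at most $[\mathcal{P}_\varphi:\mathcal{N}]\le[\mathcal{M}:\mathcal{N}]$ of them.
\end{enumerate}
The reduction to $V^*V=VV^*=P_\varphi$ is exactly what makes an orthogonality-by-trace count legitimate, because orthogonality is then needed only for trace-one projections, where it is clear.
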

To prove Theorem \ref{thm:phase finite upper bound}, we need some preparations.
First, we recall some results for bimodule quantum channels and bishit of biprojections in \cite{HJLW25,JLW16} as the followings.
Theorem \ref{thm:bishift} and Proposition \ref{prop:portion} will be applied in the proof of Theorem \ref{thm:state equivalent}.
\begin{proposition}[Corollary 6.12 in \cite{JLW16}]\label{prop:portion}
    Suppose that $\mathcal{N}\subseteq\mathcal{M}$ is an irreducible finite inclusion of factors and $\Phi$ is a bimodule quantum channel.
Then the spectral projection $Q$ of $|\Phi|$ with spectrum $\|\Phi\|_{\infty}=1$ is a biprojection and $\Phi Q$ is a bishift of biprojection.
We denote $\Phi Q$ by $V_{\Phi}$ and call it as the spectral portion of $\Phi$.
\end{proposition}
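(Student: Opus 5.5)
The plan is to work entirely inside $\mathcal{N}'\cap\mathcal{M}_1$, identifying a bimodule quantum channel with its $\mathfrak{F}$-positive element. Composition of channels corresponds to convolution of the Fourier transforms in $\mathcal{M}'\cap\mathcal{M}_2$. Since $\mathfrak{F}$ intertwines multiplication and convolution, composition should therefore correspond to ordinary multiplication in $\mathcal{N}'\cap\mathcal{M}_1$. I will first check this normalization from the formula $\Phi_V(x)=[\mathcal{M}:\mathcal{N}]\mathcal{E}_{\mathcal{M}}(Vxe_{\mathcal{N}})$.

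\emph{Step 1: $\|\Phi\|_\infty=1$.} The quantum Hausdorff--Young inequality gives $\|\Phi\|_\infty\leq \|\mathfrak{F}(\Phi)\|_1/[\mathcal{M}:\mathcal{N}]^{1/2}=1$. For the reverse inequality, I use unitality together with the bimodule property, which gives $\mathcal{E}_{\mathcal{N}}\Phi=\mathcal{E}_{\mathcal{N}}$. The element of $\mathcal{E}_{\mathcal{N}}$ is (a multiple of) the Jones projection $e_1$, so $e_1\Phi=e_1$ (equivalently $\Phi e_1=e_1$). Hence $1$ lies in the spectrum of $|\Phi|$, and the spectral projection $Q$ of $|\Phi|$ at $1$ is nonzero with $e_1\leq Q$.

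\emph{Step 2: $Q$ is a biprojection.} By the remark preceding \eqref{eq:support}, $\Phi^*\Phi$ is a bimodule quantum channel, and its element is $|\Phi|^2$, a positive contraction. The Cesàro means $\frac1n\sum_{k=1}^n(|\Phi|^2)^k$ converge in the finite-dimensional algebra $\mathcal{N}'\cap\mathcal{M}_1$ to the spectral projection of $|\Phi|^2$ at $1$, which is exactly $Q$. By Proposition \ref{prop:conditional expectation}, the same limit is $\mathcal{E}_{\mathcal{Q}}$ for some intermediate subfactor $\mathcal{Q}$, whose element is the corresponding biprojection. Comparing the two limits identifies $Q$ with that biprojection; the normalizations agree because both are projections dominating $e_1$.

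\emph{Step 3: $\Phi Q$ is a bishift.} Since $|\Phi|Q=Q$, we get $(\Phi Q)^*(\Phi Q)=Q|\Phi|^2Q=Q$, so $\Phi Q$ is a partial isometry with $\|\Phi Q\|_\infty=1$. Moreover $\Phi Q$ is the element of the composition $\Phi\mathcal{E}_{\mathcal{Q}}$, which is a bimodule quantum channel. Hence $\Phi Q$ is $\mathfrak{F}$-positive with $\|\mathfrak{F}(\Phi Q)\|_1=[\mathcal{M}:\mathcal{N}]^{1/2}$. Thus equality holds in the Hausdorff--Young inequality,
\[
\|\Phi Q\|_\infty=\frac{\|\mathfrak{F}(\Phi Q)\|_1}{[\mathcal{M}:\mathcal{N}]^{1/2}},
\]
which says that $\mathfrak{F}^{-1}$ of the relevant element is extremal. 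Theorem \ref{thm:bishift}\,(4)$\Rightarrow$(5) then shows that $\Phi Q$ is a bishift of a biprojection.

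\emph{Main obstacle.} The delicate point is bookkeeping rather than ideas. I must make sure that the dictionary ``composition $\leftrightarrow$ product in $\mathcal{N}'\cap\mathcal{M}_1$'' holds with the right constants. I must also check that the extremality condition in Theorem \ref{thm:bishift}(4) is stated in the direction matching the inequality obtained in Step 3, between $\mathcal{N}'\cap\mathcal{M}_1$ and $\mathcal{M}'\cap\mathcal{M}_2$. I would resolve both by checking them on $\Phi=\mathcal{E}_{\mathcal{N}}$ and $\Phi=\mathrm{id}$, where everything is explicit. If the direction is reversed, I would instead apply the theorem to $\mathfrak{F}(\Phi Q)$ via the rotation symmetry of the planar algebra.
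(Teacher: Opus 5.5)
Your proof is correct, but it takes a different route from the one behind the statement. The paper gives no argument of its own: it imports the result from \cite{JLW16}, where it is a corollary of the planar-algebraic analysis of extremizers of the quantum Hausdorff--Young inequality and bishifts (Section~6 there), with no channels or dynamics involved.

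You get the biprojection property dynamically instead. $Q$ is the Ces\`aro limit of $(\Phi^*\Phi)^k$, so Proposition~\ref{prop:conditional expectation} identifies it with the element of some $\mathcal{E}_{\mathcal{Q}}$. Extremality of $\Phi Q$ then comes for free, since $\Phi Q$ is the element of the channel $\Phi\mathcal{E}_{\mathcal{Q}}$, and Theorem~\ref{thm:bishift} finishes.

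\emph{What each route buys.} Yours is short and matches how the paper actually uses the statement: it is exactly the identification of $P,Q$ with \eqref{eq:support}. The cost is reliance on the ergodic theorem of \cite{HJLW25}, a later and heavier input. As a foundational proof you would need that result not to rest on \cite[Corollary 6.12]{JLW16}. You can avoid this dependence in Step~2 directly:
\begin{enumerate}
\item $\Phi^*\Phi$ is a unital, $\tau$-preserving bimodule channel.
\item By Kadison--Schwarz and faithfulness of $\tau$, its fixed points form a von Neumann algebra containing $\mathcal{N}$, hence an intermediate subfactor $\mathcal{Q}$.
\item $\Phi_Q=\lim_k(\Phi^*\Phi)^k$ is a $\tau$-preserving unital completely positive idempotent onto it, i.e.\ $\mathcal{E}_{\mathcal{Q}}$.
\end{enumerate}
The route in \cite{JLW16} is self-contained within the uncertainty-principle framework and needs no ergodic input.

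\emph{The bookkeeping points you flagged.}
\begin{itemize}
\item With $\Phi_V(x)=[\mathcal{M}:\mathcal{N}]\mathcal{E}_{\mathcal{M}}(Vxe_{\mathcal{N}})$, the push-down identity $Wxe_{\mathcal{N}}=\Phi_W(x)e_{\mathcal{N}}$ gives $\Phi_V\Phi_W=\Phi_{VW}$ exactly.
\item Likewise $\Phi_{\mathbf{1}}=\mathrm{id}$, and $\Phi_{e_{\mathcal{P}}}=\mathcal{E}_{\mathcal{P}}$ for the biprojection $e_{\mathcal{P}}$ of $\mathcal{P}$. So in Step~2, $Q=e_{\mathcal{Q}}$ follows from injectivity of $V\mapsto\Phi_V$, with no normalization argument needed.
\item $\|\mathfrak{F}^{-1}(x)\|_1=\|\mathfrak{F}(x)\|_1$, because $\mathfrak{F}^{-1}=\mathfrak{F}^{2}\circ\mathfrak{F}$ and the $180^\circ$ rotation $\mathfrak{F}^2$ is a trace-preserving $*$-anti-automorphism. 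So the direction in Theorem~\ref{thm:bishift}(4) causes no trouble; the paper makes the same identification in the proof of Theorem~\ref{thm:state equivalent}.
\item In Step~1, the two identities are not equivalent. $e_1\leq Q$ needs $\Phi e_1=e_1$, which is immediate from $\Phi(\mathcal{E}_{\mathcal{N}}(x))=\mathcal{E}_{\mathcal{N}}(x)\Phi(\mathbf{1})$. By contrast, $e_1\Phi=e_1$ puts $e_1$ under the top spectral projection of $|\Phi^*|$. Both happen to hold here.
\end{itemize}
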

\begin{remark}
    The spectral portion $V_{\Phi}$ of $\Phi$ is $\mathfrak{F}$-positive by the quantum Schur product theorem:
\begin{align*}
    \mathfrak{F}(V_{\Phi})=\mathfrak{F}(\Phi)\ast\mathfrak{F}(Q)\geq 0.
\end{align*}
Moreover, $\|\mathfrak{F}(V_{\Phi})\|_1=[\mathcal{M}:\mathcal{N}]^{1/2}$.
\end{remark}
Suppose  $Q$ is the spectral projection of $|\Phi|$ with spectrum $\|\Phi\|_{\infty}=1$ and $P$ is the spectral projection of $|\Phi^*|$ with spectrum $\|\Phi^*\|_{\infty}=1$ respectively.
Then $P,Q$ are given by Equation \eqref{eq:support}.
Thus, by Equation \eqref{eq:relation between two conditional expectation},
\begin{align}\label{eq:two relation between two spectral projection}
   P=\Phi Q\Phi^*,\quad Q=\Phi^*P\Phi.
\end{align}
\begin{remark}\label{rem:characterization of bishif of biprojection}
    By Theorem \ref{thm:bishift} and the definition of extremal elements, we have that if $V$ is a partial isometry and $\|\mathfrak{F}(V)\|_1=[\mathcal{M}:\mathcal{N}]^{1/2}$, then $V$ is a bishift of biprojection.
\end{remark}

Next we assume that $\varphi$ is equivalent to $\psi$ under bimodule quantum channels.
Then we have
\begin{align*}
        \varphi\Phi=\psi,\quad \psi \Psi=\varphi,
    \end{align*}
where $\Phi$ and $\Psi$ are bimodule quantum channels.
Let $\mathcal{P}_{\varphi}$ and $\mathcal{P}_{\psi}$ be the OA symmetries of $\varphi$ and $\psi$ respectively  as given in Definition \ref{def:OA symmetry}.
It follows that
\begin{align*}
      \varphi \mathcal{E}_{\mathcal{P}_{\varphi}}\Phi\mathcal{E}_{\mathcal{P}_{\psi}}=\psi,\quad \psi \mathcal{E}_{\mathcal{P}_{\psi}}\Psi\mathcal{E}_{\mathcal{P}_{\varphi}}=\varphi.
\end{align*}
For simplicity, we set $\Phi:=\mathcal{E}_{\mathcal{P}_{\varphi}}\Phi\mathcal{E}_{\mathcal{P}_{\psi}}$ and $\Psi:=\mathcal{E}_{\mathcal{P}_{\psi}}\Psi\mathcal{E}_{\mathcal{P}_{\varphi}}$.
    Then we have that
\begin{align*}
    \varphi\Phi\Psi=\varphi,\quad \psi\Psi\Phi=\psi.
\end{align*}
By the minimality of $\mathcal{P}_{\varphi}$ and $\mathcal{P}_{\psi}$, and Proposition \ref{prop:conditional expectation}, we have
\begin{align}\label{eq:conditional expectation}
    \mathcal{E}_{\mathcal{P}_{\varphi}}=\lim_{n\to \infty}\frac{1}{n}\sum_{k=1}^n(\Phi\Psi)^k,\quad   \mathcal{E}_{\mathcal{P}_{\psi}}=\lim_{n\to \infty}\frac{1}{n}\sum_{k=1}^n(\Psi\Phi)^k.
\end{align}
It follows that
\begin{align}\label{eq:relation conditional expectation}
      \mathcal{E}_{\mathcal{P}_{\varphi}}=\Phi  \mathcal{E}_{\mathcal{P}_{\psi}} \Psi,\quad \mathcal{E}_{\mathcal{P}_{\psi}}=\Psi \mathcal{E}_{\mathcal{P}_{\varphi}}\Phi. 
\end{align}
Now we are able to characterize the equivalence between two states by bishifts of biprojections between two symmetries. 
\begin{theorem}\label{thm:state equivalent}
    Suppose $\mathcal{N}\subseteq\mathcal{M}$ is an irreducible finite inclusion of factors and $\varphi,\psi$ are normal states.
    Then $\varphi$ is equivalent to $\psi$ if and only if there exists a bishift of biprojection $V$ such that
\begin{align*}
    \varphi \Phi_V=\psi,\quad \psi \Phi_{V^*}=\varphi,
\end{align*}
where $\Phi_V$ and $\Phi_{V^*}$ are the bimodule quantum channels associated with $V$ and $V^*$ respectively.
Moreover,
\begin{align*}
    VV^*=P_{\varphi},\quad V^*V=P_{\psi},
\end{align*}
where $P_{\varphi}, P_{\psi}$ are biprojections corresponding to intermediate subfactors $\mathcal{P}_{\varphi},\mathcal{P}_{\psi}$.
\end{theorem}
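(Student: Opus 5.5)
The "if" direction is immediate. If $V$ is a bishift of a biprojection with $\varphi\Phi_V=\psi$ and $\psi\Phi_{V^*}=\varphi$, then by the Remark following Proposition~\ref{prop:portion} and the normalization $\|\mathfrak{F}(V)\|_1=[\mathcal{M}:\mathcal{N}]^{1/2}$, the maps $\Phi_V$ and $\Phi_{V^*}$ are bimodule quantum channels. Hence $\varphi$ and $\psi$ are equivalent by definition. One still has to check that a bishift $V$ with $V^*V,VV^*$ biprojections is $\mathfrak{F}$-positive up to the right normalization. Otherwise we only need to read the statement as assuming $\Phi_V,\Phi_{V^*}$ are channels.

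For the "only if" direction, I start from the reduction made just before the theorem. We replace $\Phi$ by $\mathcal{E}_{\mathcal{P}_{\varphi}}\Phi\mathcal{E}_{\mathcal{P}_{\psi}}$ and $\Psi$ by $\mathcal{E}_{\mathcal{P}_{\psi}}\Psi\mathcal{E}_{\mathcal{P}_{\varphi}}$. These still satisfy $\varphi\Phi=\psi$ and $\psi\Psi=\varphi$, and they satisfy \eqref{eq:conditional expectation} and \eqref{eq:relation conditional expectation}. In the element picture of $\mathcal{N}'\cap\mathcal{M}_1$, $\mathcal{E}_{\mathcal{P}}$ corresponds to the biprojection $P$, up to the normalization fixed by $\Phi_V$. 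Composition of channels is then multiplication in $\mathcal{N}'\cap\mathcal{M}_1$, since it is convolution on the Fourier side, so \eqref{eq:relation conditional expectation} reads
\[
P_{\varphi}=\Phi P_{\psi}\Psi,\qquad P_{\psi}=\Psi P_{\varphi}\Phi,
\]
together with $\Phi=P_{\varphi}\Phi P_{\psi}$. Since every bimodule channel has $\|\Phi\|_\infty\le 1$ (it is a contraction), we get $1=\|P_\varphi\|_\infty\le\|\Phi\|_\infty\|\Psi\|_\infty\le 1$. The key step is then to show that the partial isometry part of $\Phi$ between $P_\psi$ and $P_\varphi$ is all of it. The plan is as follows.

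\begin{enumerate}[(1)]
\item Let $Q$ be the spectral projection of $|\Phi|$ at $1$, so that $V:=V_\Phi=\Phi Q$ is an $\mathfrak{F}$-positive bishift of a biprojection by Proposition~\ref{prop:portion}.
\item From $P_\varphi=\Phi P_\psi\Psi$ and the contraction bounds, every unit vector $\xi$ in the range of $P_\varphi$ (in $L^2(\mathcal{N}'\cap\mathcal{M}_1,tr_2)$) satisfies $\xi=\Phi\eta$ with $\|\eta\|\le1$. Hence $\Phi^*$ is isometric on $P_\varphi$ and $\Phi$ is isometric on $P_\psi$. This gives $P_\psi\le Q$, and $\Phi P_\psi=P_\varphi\Phi$.
\item A trace comparison then forces $\Phi^*\Phi=P_\psi$ and $\Phi\Phi^*=P_\varphi$ exactly, not merely as subprojections. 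Since $\Phi=P_\varphi\Phi P_\psi$, $\Phi$ is a partial isometry from $P_\psi$ onto $P_\varphi$, and $\Psi=\Phi^*$ on these ranges.
\end{enumerate}

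With $\Phi$ a partial isometry whose Fourier transform has $\|\mathfrak{F}(\Phi)\|_1=[\mathcal{M}:\mathcal{N}]^{1/2}$, Remark~\ref{rem:characterization of bishif of biprojection} shows that $\Phi$ is a bishift of a biprojection; call it $V$. Then $\Phi_V=\Phi$, and $\Psi=P_\psi\Psi P_\varphi=\Phi^*$ gives $\Phi_{V^*}$. This yields the required identities. The convention in the statement ($VV^*=P_\varphi$, $V^*V=P_\psi$) has to be matched against whether $\Phi_V$ corresponds to $V$ or $V^*$ under the correspondence $x\mapsto[\mathcal{M}:\mathcal{N}]\mathcal{E}_{\mathcal{M}}(Vxe_{\mathcal{N}})$. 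I will fix this at the end by possibly swapping $V$ and $V^*$.

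The main obstacle is step (2)–(3): deducing $\Psi=\Phi^*$, and that $\Phi$ is a partial isometry, from the identities $P_\varphi=\Phi P_\psi\Psi$. The spectral portion handles the norm-one part. The difficulty is to show that the remainder $\Phi(1-Q)$ vanishes after compression by $P_\varphi$ and $P_\psi$. I would try to get this from equality in the Cauchy–Schwarz inequality $tr_2(P_\varphi)=tr_2(\Phi P_\psi\Psi)\le\|\Phi P_\psi\|_2\|P_\psi\Psi\|_2$. If that fails, I would use \eqref{eq:conditional expectation} with a mean ergodic argument: since $\Phi\Psi$ is a contraction whose Cesàro means converge to $P_\varphi$, $\Phi\Psi$ acts as the identity on the range of $P_\varphi$ and is strictly contractive elsewhere.
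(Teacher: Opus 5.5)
Your proposal is correct and follows essentially the paper's route. You reduce to $\Phi=P_\varphi\Phi P_\psi$ and $\Psi=P_\psi\Psi P_\varphi$, and use $P_\varphi=\Phi P_\psi\Psi$, $P_\psi=\Psi P_\varphi\Phi$ together with $\|\Phi\|_\infty,\|\Psi\|_\infty\le 1$ to force $\Phi\Phi^*=P_\varphi$, $\Phi^*\Phi=P_\psi$ and $\Psi=\Phi^*$; your Cauchy--Schwarz saturation is the vector form of the paper's $P_\varphi\le P_\varphi\Phi\Phi^*P_\varphi\le P_\varphi$ and $(P_\varphi\Phi-P_\varphi\Psi^*)(P_\varphi\Phi-P_\varphi\Psi^*)^*=0$, and both conclude via the extremality criterion.

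Two small points. The ``main obstacle'' you flag is already closed by your step (2): $1=\langle\eta,\Phi^*\xi\rangle\le\|\Phi^*\xi\|$ gives the isometry, and then $\Psi=P_\psi\Psi=\Phi^*\Phi\Psi=\Phi^*P_\varphi=\Phi^*$. Also, the spectral portion in step (1) is unnecessary, because $V$ is the reduced channel itself and is therefore automatically $\mathfrak{F}$-positive with $\|\mathfrak{F}(V)\|_1=[\mathcal{M}:\mathcal{N}]^{1/2}$; the paper takes a detour through $V=P_\varphi V_\Phi$ and the Schur product here.
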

\begin{proof}
The sufficiency is clear and we show the necessity.
    Suppose that $\varphi$ is equivalent to $\psi$.
By Equation \eqref{eq:relation conditional expectation}, 
\begin{align*}
      P_{\varphi}=\Phi  P_{\psi} \Psi,\quad P_{\psi}=\Psi P_{\varphi}\Phi. 
\end{align*}
By \cite[Theorem 4.4]{HJLW25}, $P_{\varphi}, P_{\psi}$ are the spectral projections of $\Phi\Psi$, $\Psi\Phi$ corresponding to $1$ respectively.
That is
\begin{align*}
    P_{\varphi}\Phi\Psi P_{\varphi}=P_{\varphi},\quad P_{\psi}\Psi\Phi P_{\psi}=P_{\psi}.
\end{align*}
Note that
\begin{align*}
    P_{\varphi}=P_{\varphi}\Phi\Psi P_{\varphi}\Psi^*\Phi^*P_{\varphi}\leq P_{\varphi}\Phi\Phi^*P_{\varphi}\leq P_{\varphi},\quad P_{\varphi}=P_{\varphi}\Psi^*\Phi^*P_{\varphi}\Phi\Psi P_{\varphi}\leq P_{\varphi}\Psi^*\Psi P_{\varphi}\leq P_{\varphi}.
\end{align*}
Hence $ P_{\varphi}\Phi\Phi^*P_{\varphi}= P_{\varphi}$ and $P_{\varphi}\Psi^*\Psi P_{\varphi}=P_{\varphi}$.
Therefore,
\begin{align*}
    (P_{\varphi}\Phi-P_{\varphi}\Psi^*)(P_{\varphi}\Phi-P_{\varphi}\Psi^*)^*=P_{\varphi}\Phi\Phi^*P_{\varphi}-P_{\varphi}\Phi\Psi P_{\varphi}-P_{\varphi}\Psi^*\Phi^*P_{\varphi}+P_{\varphi}\Psi^*\Psi P_{\varphi}=0.
\end{align*}
We obtain that $P_{\varphi}\Phi=P_{\varphi}\Psi^*(=:V)$ is a partial isometry.
Then $VV^*=P_{\varphi}$ and $V^*V=\Psi P_{\varphi}\Phi =P_{\psi}$. 

Next we prove that $V$ is a bishift of biprojection such that $\mathfrak{F}(V)$ is positive and $\|\mathfrak{F}(V)\|_1=[\mathcal{M}:\mathcal{N}]^{1/2}$.
Let $V_{\Phi}$ be the spectral portions of $\Phi$.
By Equation \eqref{eq:two relation between two spectral projection}, one can check that $(P_{\varphi}V_{\Phi}-V)(P_{\varphi}V_{\Phi}-V)^*=0$  and then $P_{\varphi}V_{\Phi}=V$.
Thus 
\begin{align*}
    \mathfrak{F}(V)=\mathfrak{F}(P_{\varphi})\ast \mathfrak{F}(V_{\Phi})\geq 0.
\end{align*}
Moreover,
\begin{align*}
    \|\mathfrak{F}(V)\|_1=\|\mathfrak{F}(P_{\varphi})\ast \mathfrak{F}(V_{\Phi})\|_1=\frac{\|\mathfrak{F}(P_{\varphi})\|_1 \|\mathfrak{F}(V_{\Phi})\|_1}{[\mathcal{M}:\mathcal{N}]^{1/2}}=[\mathcal{M}:\mathcal{N}]^{1/2}.
\end{align*}
Thus, 
\begin{align*}
    \|V\|_{\infty}=1=\frac{ \|\mathfrak{F}(V)\|_1}{[\mathcal{M}:\mathcal{N}]^{1/2}},
\end{align*}
which follows that $\mathfrak{F}^{-1}(V)$ is extremal.
Then by Theorem \ref{thm:bishift}, $V$ is a bishift of biprojection.
We have that $\Phi_V$ is a  bimodule quantum channel.
Note that $\Phi_V=\mathcal{E}_{\mathcal{P}_{\varphi}}\Phi$ since $V=P_{\varphi}\Phi$.
We have that $\varphi\Phi_V=\psi$.
Similarly, we have that $\psi\Phi_{V^*}=\varphi$.
This completes the proof.
\end{proof}
\begin{remark}
    Suppose $\varphi$ is equivalent to $\psi$.
    In the proof of Theorem \ref{thm:state equivalent}, we see that the bishift of biprojection $V=\mathcal{P}_{\varphi}\Phi\mathcal{P}_{\psi}=\mathcal{P}_{\varphi}  \Psi^*\mathcal{P}_{\psi}$.
\end{remark}
\begin{corollary}
    Suppose $\mathcal{N}\subseteq\mathcal{M}$ is an irreducible finite inclusion of factors and $\varphi,\psi$ are normal states. 
    We assume that there exists a bimodule quantum channel $\Phi$ such that $\varphi\Phi=\psi$.
    Then $\varphi$ is equivalent to $\psi$ if and only if $\mathcal{P}_{\varphi}\sim\mathcal{P}_{\psi}$, and $\mathcal{P}_{\varphi}\Phi\mathcal{P}_{\psi}$ is a bishift of biprojection mapping $\mathcal{P}_{\psi}$ onto $\mathcal{P}_{\varphi}$.
\end{corollary}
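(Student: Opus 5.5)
The plan is to derive both directions from Theorem \ref{thm:state equivalent} and its proof. The key bookkeeping fact is that, under the correspondence $\Phi\leftrightarrow$ element of $\mathcal{N}'\cap\mathcal{M}_1$, composition of bimodule maps becomes the product in $\mathcal{N}'\cap\mathcal{M}_1$ (it is the convolution on the Fourier side $\mathcal{M}'\cap\mathcal{M}_2$). Under this correspondence $\mathcal{E}_{\mathcal{P}}\leftrightarrow P$. Hence the element $P_{\varphi}\Phi P_{\psi}$ is the element attached to the bimodule map $\mathcal{E}_{\mathcal{P}_{\varphi}}\Phi\mathcal{E}_{\mathcal{P}_{\psi}}$.

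\emph{Necessity.} Suppose $\varphi$ is equivalent to $\psi$, and choose $\Psi$ with $\psi\Psi=\varphi$. The argument before Theorem \ref{thm:state equivalent} only uses that the pair $(\Phi,\Psi)$ satisfies $\varphi\Phi=\psi$ and $\psi\Psi=\varphi$. I therefore rerun it with the given $\Phi$:
\begin{itemize}
\item Replace $\Phi$ by $\mathcal{E}_{\mathcal{P}_{\varphi}}\Phi\mathcal{E}_{\mathcal{P}_{\psi}}$ and $\Psi$ by $\mathcal{E}_{\mathcal{P}_{\psi}}\Psi\mathcal{E}_{\mathcal{P}_{\varphi}}$. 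Both are still channels sending $\varphi$ to $\psi$ and back, because $\varphi$ and $\psi$ are protected by their OA symmetries.
\item Use minimality of $\mathcal{P}_\varphi,\mathcal{P}_\psi$ together with Proposition \ref{prop:conditional expectation} to get \eqref{eq:relation conditional expectation}.
\item Use the partial-isometry computation to get $V:=P_{\varphi}\Phi P_{\psi}=P_\varphi\Psi^*P_\psi$ with $VV^*=P_\varphi$ and $V^*V=P_\psi$.
\item The spectral-portion argument (Proposition \ref{prop:portion}, Remark \ref{rem:characterization of bishif of biprojection}) shows that $V$ is a bishift of a biprojection.
\end{itemize}
This gives $\mathcal{P}_\varphi\sim\mathcal{P}_\psi$, realized by $P_\varphi\Phi P_\psi$.

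\emph{Sufficiency.} Set $V=P_{\varphi}\Phi P_{\psi}$, assumed to be a bishift with $VV^*=P_\varphi$ and $V^*V=P_\psi$.
\begin{itemize}
\item $V$ corresponds to the unital completely positive bimodule map $\mathcal{E}_{\mathcal{P}_{\varphi}}\Phi\mathcal{E}_{\mathcal{P}_{\psi}}$. Hence $V$ is $\mathfrak{F}$-positive with $\|\mathfrak{F}(V)\|_1=[\mathcal{M}:\mathcal{N}]^{1/2}$, so $\Phi_V=\mathcal{E}_{\mathcal{P}_{\varphi}}\Phi\mathcal{E}_{\mathcal{P}_{\psi}}$.
\item Consequently $\varphi\Phi_V=\varphi\Phi\mathcal{E}_{\mathcal{P}_\psi}=\psi\mathcal{E}_{\mathcal{P}_\psi}=\psi$.
\item $V^*$ is again $\mathfrak{F}$-positive, since $\mathfrak{F}(V^*)$ is the adjoint (up to the contragredient rotation) of $\mathfrak{F}(V)$. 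It has the same $1$-norm, so $\Phi_{V^*}$ is a bimodule quantum channel.
\item Since $\Phi_V\Phi_{V^*}$ corresponds to $VV^*=P_\varphi$, it equals $\mathcal{E}_{\mathcal{P}_\varphi}$. Therefore $\psi\Phi_{V^*}=\varphi\Phi_V\Phi_{V^*}=\varphi\mathcal{E}_{\mathcal{P}_\varphi}=\varphi$, which gives the equivalence.
\end{itemize}

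\emph{Main obstacle.} The delicate point is the order convention in the dictionary between composition and multiplication. I must check whether $\Phi_V\Phi_{V^*}$ corresponds to $VV^*$ or to $V^*V$. The form $V=P_\varphi\Phi P_\psi\leftrightarrow\mathcal{E}_{\mathcal{P}_\varphi}\Phi\mathcal{E}_{\mathcal{P}_\psi}$ fixes this convention, and it is the one consistent with the statement. If instead the product came out as $V^*V=P_\psi$, I would use the relation $\Phi_{V^*}\Phi_V=\mathcal{E}_{\mathcal{P}_\psi}$ on the other side. I would then verify $\psi\Phi_{V^*}=\varphi$ via the Petz-type identity $P_\varphi=\Phi P_\psi\Phi^*$ from \eqref{eq:relation between two conditional expectation}.
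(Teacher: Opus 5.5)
Your proposal is correct and is essentially the paper's proof. Necessity comes from (the proof of) Theorem~\ref{thm:state equivalent}, which produces $V=P_\varphi\Phi P_\psi$ for the given $\Phi$; sufficiency identifies $\Phi_V=\mathcal{E}_{\mathcal{P}_\varphi}\Phi\mathcal{E}_{\mathcal{P}_\psi}$ and uses $\psi\Phi_{V^*}=\varphi\Phi_V\Phi_{V^*}=\varphi\mathcal{E}_{\mathcal{P}_\varphi}=\varphi$. The order convention you worried about goes your way: $\Phi_a(x)\Omega_\tau=ax\Omega_\tau$ in $L^2(\mathcal{M},\tau)$ gives $\Phi_a\Phi_b=\Phi_{ab}$, so $\Phi_V\Phi_{V^*}$ corresponds to $VV^*=P_\varphi$ and your fallback is unnecessary.
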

\begin{proof}
Suppose $\varphi$ is equivalent to $\psi$.
By Theorem \ref{thm:state equivalent}, the conclusions hold.
On the contrary, let $V:=\mathcal{P}_{\varphi}\Phi\mathcal{P}_{\psi}$.
    Then we have that  $\Phi_V=\mathcal{E}_{\mathcal{P}_{\varphi}}\Phi\mathcal{E}_{\mathcal{P}_{\psi}}$ and
    \begin{align*}
        \varphi\Phi_V=\varphi\Phi\mathcal{E}_{\mathcal{P}_{\psi}}=\psi\mathcal{E}_{\mathcal{P}_{\psi}}=\psi.
    \end{align*}
    Hence 
    \begin{align*}
        \psi \Phi_{V^*}=\varphi\Phi_{V}\Phi_{V^*}=\varphi\mathcal{E}_{\mathcal{P}_{\varphi}}=\varphi.
    \end{align*}
    We see that $\varphi$ is equivalent to $\psi$.
\end{proof}
By Theorem \ref{thm:state equivalent}, we have the following relation between the cardinality of the phase and the number of bishifts of biprojection.
\begin{corollary}\label{cor:number of phase}
     Suppose $\mathcal{N}\subseteq\mathcal{M}$ is an irreducible finite inclusion of factors and $\varphi$ is a normal state.
     Let $P_{\varphi}$ be the  biprojection corresponding to the  intermediate subfactor $\mathcal{P}_{\varphi}$.
     Define
\begin{align*}
    \mathfrak{S}_1:=&\{V \text{ is a bishift of biprojection in}\ \mathcal{N}'\cap\mathcal{M}_1:\ V\ \text{is $\mathfrak{F}$-positive}, \|\mathfrak{F}(V)\|_1=[\mathcal{M}:\mathcal{N}]^{1/2},\\&\ V^*V=P_{\varphi}, VV^*=Q \ \text{for some biprojection $Q$ in $\mathcal{N}'\cap\mathcal{M}_1$}\},\\
    \mathfrak{S}_2:=&\{V \text{ is a bishift of biprojection in}\ \mathcal{N}'\cap\mathcal{M}_1:\ V\ \text{is $\mathfrak{F}$-positive}, \|\mathfrak{F}(V)\|_1=[\mathcal{M}:\mathcal{N}]^{1/2},\\&\quad  V^*V=P_{\varphi}, VV^*=P_{\varphi}\},\\
    \mathfrak{S}_3:=&\{Q\ \text{is a biprojection in $\mathcal{N}'\cap\mathcal{M}_1$}: Q\sim P_{\varphi} \},
\end{align*}
 where $Q\sim P_{\varphi}$ means there exists a bishift of biprojection $W$ such that $W^*W=Q$ and $WW^*=P_{\varphi}$.
Then we have
\begin{align*}
     \#[\varphi]\leq \#\mathfrak{S}_1= \#\mathfrak{S}_2\times \#\mathfrak{S}_3.
\end{align*}
\end{corollary}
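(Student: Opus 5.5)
The plan is to prove the inequality $\#[\varphi]\leq\#\mathfrak{S}_1$ by an injection $[\varphi]\to\mathfrak{S}_1$ built from Theorem~\ref{thm:state equivalent}, and the equality $\#\mathfrak{S}_1=\#\mathfrak{S}_2\times\#\mathfrak{S}_3$ by fibering $\mathfrak{S}_1$ over $\mathfrak{S}_3$ via $V\mapsto VV^*$.

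\emph{Injection.} Let $\psi\in[\varphi]$.
\begin{itemize}
\item Theorem~\ref{thm:state equivalent} gives a bishift of biprojection $V_\psi$ with $\varphi\Phi_{V_\psi}=\psi$, $V_\psi V_\psi^*=P_\varphi$ and $V_\psi^*V_\psi=P_\psi$. Its proof also shows that $V_\psi$ is $\mathfrak{F}$-positive with $\|\mathfrak{F}(V_\psi)\|_1=[\mathcal{M}:\mathcal{N}]^{1/2}$.
\item I assign to $\psi$ the element $W_\psi:=V_\psi^*$. Then $W_\psi^*W_\psi=P_\varphi$ and $W_\psi W_\psi^*=P_\psi$, which is a biprojection.
\item The adjoint of a bishift of biprojection is again one, since the characterization (4) of Theorem~\ref{thm:bishift} is symmetric under $*$.
\item For $\mathfrak{F}$-positivity and the norm, I use that $\mathfrak{F}(x^*)$ is the contragredient (a $180^\circ$ rotation) of $\mathfrak{F}(x)^*$. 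The rotation preserves positivity and $\|\cdot\|_1$, so $W_\psi$ is $\mathfrak{F}$-positive with $\|\mathfrak{F}(W_\psi)\|_1=[\mathcal{M}:\mathcal{N}]^{1/2}$.
\item Hence $W_\psi\in\mathfrak{S}_1$. Since $\psi=\varphi\Phi_{W_\psi^*}$, the state $\psi$ is recovered from $W_\psi$, so $\psi\mapsto W_\psi$ is injective.
\end{itemize}

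\emph{Product formula.} Define $\pi:\mathfrak{S}_1\to\mathfrak{S}_3$ by $\pi(V)=VV^*$. This lands in $\mathfrak{S}_3$ because $W=V^*$ witnesses $W^*W=VV^*$ and $WW^*=P_\varphi$. For each $Q$ in the image I fix $U_Q\in\mathfrak{S}_1$ with $U_QU_Q^*=Q$, and define maps between $\mathfrak{S}_2$ and the fibre $\pi^{-1}(Q)$:
\[
\pi^{-1}(Q)\ni V\longmapsto U_Q^*V\in\mathfrak{S}_2,\qquad \mathfrak{S}_2\ni X\longmapsto U_QX\in\pi^{-1}(Q).
\]
\begin{itemize}
\item The projection identities are routine: $(U_Q^*V)^*(U_Q^*V)=V^*QV=P_\varphi$ and $(U_Q^*V)(U_Q^*V)^*=U_Q^*QU_Q=P_\varphi$.
\item The two maps are mutually inverse, using $U_QU_Q^*V=QV=V$ and $U_Q^*U_QX=P_\varphi X=X$.
\item Products and adjoints of bishifts are bishifts, by the Hausdorff--Young/Young computation in the proof of Proposition~\ref{prop:partial order} (iii).
\item $\mathfrak{F}$-positivity is preserved because the Fourier transform of a product is the convolution of the Fourier transforms, and a convolution of positives is positive by the quantum Schur product theorem.
\item The normalization $\|\mathfrak{F}(\cdot)\|_1=[\mathcal{M}:\mathcal{N}]^{1/2}$ is preserved because Young's inequality is an equality for extremal elements, exactly as in that proof.
\end{itemize}
Thus every nonempty fibre has cardinality $\#\mathfrak{S}_2$.

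\emph{Main obstacle.} The step I expect to be hardest is surjectivity of $\pi$: every $Q\sim P_\varphi$ must admit a witness that is $\mathfrak{F}$-positive and correctly normalized, whereas the definition of $\mathfrak{S}_3$ only provides some bishift $W$. I would first try to rectify a given witness. Write $W=\mathfrak{F}(\widetilde{P}_h)\ast(yP_g)$ using the explicit bishift form, then multiply by a suitable scalar and by the spectral-portion construction of Proposition~\ref{prop:portion}. The aim is to replace $\mathfrak{F}(W)$ by its positive polar part $|\mathfrak{F}(W)|$ while keeping the initial and final projections fixed. If this fails in general, I would settle for $\#[\varphi]\leq\#\mathfrak{S}_2\times\#\mathfrak{S}_3$, which follows from the fibre count above because the fibres are either empty or of size $\#\mathfrak{S}_2$. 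That inequality is all that is needed for Theorem~\ref{thm:phase finite upper bound}.
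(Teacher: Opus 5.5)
Your proof follows the paper's own route. The inequality comes from Theorem~\ref{thm:state equivalent}; you are more careful than the paper in passing to $V_\psi^*$, so that the source projection is $P_\varphi$ as $\mathfrak{S}_1$ requires. The product formula comes from identifying each fibre $\{V: VV^*=Q\}$ with $\mathfrak{S}_2$ via $V\mapsto U_Q^*V$ and $X\mapsto U_QX$.

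The obstacle you flag is a genuine gap, and the paper does not close it either. Its proof fixes some $W\in\mathfrak{S}_1^Q$ ``for any biprojection $Q\sim P_\varphi$'' without showing $\mathfrak{S}_1^Q\neq\emptyset$, i.e.\ that some witness of $Q\sim P_\varphi$ is $\mathfrak{F}$-positive and normalized. As written, your argument (like the paper's) proves only $\#[\varphi]\le\#\mathfrak{S}_1\le\#\mathfrak{S}_2\cdot\#\mathfrak{S}_3$. That is enough for Theorem~\ref{thm:phase finite upper bound}, but it is not the stated equality. Your suggested repair through the explicit form $\mathfrak{F}(\widetilde P_h)\ast(yP_g)$ and spectral portions is not needed. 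Replacing $\mathfrak{F}(W)$ by $|\mathfrak{F}(W)|$ also gives you no control over the source and range projections.

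The missing observation is that every witness is $\mathfrak{F}$-positive up to a unimodular scalar. Let $W$ be a bishift with $W^*W=Q$ and $WW^*=P_\varphi$, and write $\delta=[\mathcal{M}:\mathcal{N}]^{1/2}$ and $e_1=e_{\mathcal{N}}$.

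\emph{Fixing the phase.} By irreducibility the trivial $\mathcal{N}$-$\mathcal{N}$ bimodule occurs exactly once in $L^2(\mathcal{M})$, so $e_1$ is a minimal central projection of $\mathcal{N}'\cap\mathcal{M}_1$. Every biprojection dominates $e_1$. Hence $W^*e_1=\mu e_1$ for some scalar $\mu$, and $e_1=e_1WW^*e_1$ gives $|\mu|=1$. Put $V=\bar\mu W^*$. Then $V^*V=P_\varphi$, $VV^*=Q$, and $Ve_1=e_1$.

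\emph{Computing the trace.} Closing up the rotated diagram \eqref{eq:Fourier transform} caps the top of $x$ and cups its bottom. So $tr_2(\mathfrak{F}(x))$ is a fixed multiple of $tr_2(xe_1)=tr_2(e_1xe_1)$. It follows that $tr_2(\mathfrak{F}(V))=tr_2(\mathfrak{F}(e_1))=\delta$, because $e_1$ is the element of the channel $\mathcal{E}_{\mathcal{N}}$.

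\emph{Computing the $1$-norm.} $V$ is a partial isometry and a bishift, so Theorem~\ref{thm:bishift}(4) gives $\|\mathfrak{F}^{-1}(V)\|_1=\delta\|V\|_\infty=\delta$. Hence $\|\mathfrak{F}(V)\|_1=\delta$, since $\mathfrak{F}(V)$ and $\mathfrak{F}^{-1}(V)$ differ by the contragredient, which preserves $\|\cdot\|_1$. This also shows the normalization in $\mathfrak{S}_1$ and $\mathfrak{S}_2$ is automatic for bishift partial isometries.

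\emph{Positivity.} In a finite von Neumann algebra, $tr_2(y)=\|y\|_1$ forces $y\ge0$. Indeed, if $y=u|y|$, then $tr_2(y)=tr_2(|y|^{1/2}u|y|^{1/2})$, and equality in Cauchy--Schwarz forces $u|y|^{1/2}=|y|^{1/2}$, so $y=|y|$.

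Thus $\mathfrak{F}(V)\ge0$ and $V\in\mathfrak{S}_1^Q$. Every fibre of your map $\pi$ is therefore nonempty, and your fibre count gives $\#\mathfrak{S}_1=\#\mathfrak{S}_2\cdot\#\mathfrak{S}_3$.
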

\begin{proof}
    The  inequality follows from Theorem \ref{thm:state equivalent}.
    Next we show the equality.
   Fixed a biprojection $Q\in\mathcal{N}'\cap\mathcal{M}_1$, define the subset $\mathfrak{S}_1^Q$ of $\mathfrak{S}_1$ as
   \begin{align*}
       \mathfrak{S}_1^Q:=&\{V \text{ is a bishift of biprojection in}\ \mathcal{N}'\cap\mathcal{M}_1:\ V\ \text{is $\mathfrak{F}$-positive}, \|\mathfrak{F}(V)\|_1=[\mathcal{M}:\mathcal{N}]^{1/2},\\&\ V^*V=P_{\varphi}, VV^*=Q\}.
   \end{align*}
   For any $V,W\in\mathfrak{S}_1^Q$, we have $W^*V\in\mathfrak{S}_2$.
   Indeed,
   \begin{align*}
      V^*W W^*V=V^*QV=P_{\varphi},\quad W^*VV^*W=W^*QW=P_{\varphi}.
   \end{align*}
   Since $W$ is $\mathfrak{F}$-positive, we have that $W^*$ is $\mathfrak{F}$-positive and $tr_2(\mathfrak{F}(W^*))=tr_2(\mathfrak{F}(W))=[\mathcal{M}:\mathcal{N}]^{1/2}$.
   Thus by the quantum Schur product theorem, $W^*V$ is $\mathfrak{F}$-positive and 
   \begin{align*}
       \|\mathfrak{F}(W^*V)\|_1=[\mathcal{M}:\mathcal{N}]^{1/2}.
   \end{align*}
   By Theorem \ref{thm:bishift} and Remark \ref{rem:characterization of bishif of biprojection}, $W^*V$ is a bishift of biprojection.
   Thus $W^*V\in\mathfrak{S}_2$.
   Now suppose $V_1,V_2,W\in\mathfrak{S}_1^Q$ such that $W^*V_1=W^*V_2$.
   Then $V_1=QV_1=WW^*V_1=WW^*V_2=QV_2=V_2$.
   It follows that for a fixed element $W\in\mathfrak{S}_1^Q$, the following map induced by $W$
   \begin{align*}
       \mathfrak{S}_1^Q\to \mathfrak{S}_2,\quad V\mapsto W^*V
   \end{align*}
   is injective.
   On the other hand, for any $U\in\mathfrak{S}_2$, we can similarly prove that $WU\in\mathfrak{S}_1^Q$.
   Hence the map is also surjective.
   Thus $\#\mathfrak{S}_1^Q=\#\mathfrak{S}_2$ for any biprojection $Q\sim P_{\varphi}$.
   We have
   \begin{align*}
       \#\mathfrak{S}_1=\sum_{Q\sim P_{\varphi}}\#\mathfrak{S}_1^Q=\sum_{Q\sim P_{\varphi}}\#\mathfrak{S}_2=\#\mathfrak{S}_2\times \#\mathfrak{S}_3.
   \end{align*}
   The proof is completed.
\end{proof}
Next, we will estimate $\#\mathfrak{S}_2$ and $\#\mathfrak{S}_3$ by Jones index.
Before proving the following lemma, we recall some existing results of bishifts of biprojections.
Suppose $\mathcal{N}\subseteq\mathcal{M}$ is an irreducible finite inclusion of factors.
Suppose $V$ is a bishift of biprojection in $\mathcal{N}'\cap\mathcal{M}_1$.
By \cite[Main Theorem 2]{JLW16}, $V$ is a minimizer of the quantum Donoho-Stark uncertainty principle:
\begin{align*}
    \mathcal{S}(V)\mathcal{S}(\mathfrak{F}(V))=[\mathcal{M}:\mathcal{N}].
\end{align*}
So if $V$ is a unitary, then $\mathcal{S}(V)=[\mathcal{M}:\mathcal{N}]$, which implies that $\mathcal{S}(\mathfrak{F}(V))=1$.
Hence the range projection $\mathcal{R}(\mathfrak{F}(V))$ is a trace-one projection.
Thus $\mathfrak{F}(V)$ is a multiple of a trace-one projection.

\begin{lemma}\label{lem:bishift number}
    Suppose $\mathcal{N}\subseteq\mathcal{M}$ is an irreducible finite inclusion of factors and $P$ is a biprojection in $\mathcal{N}'\cap\mathcal{M}_1$.
Then
\begin{align*}
    &\#\{V \text{ is a bishift of biprojection in}\ \mathcal{N}'\cap\mathcal{M}_1:\ V\ \text{is $\mathfrak{F}$-positive}, \|\mathfrak{F}(V)\|_1=[\mathcal{M}:\mathcal{N}]^{1/2},\\&\quad  V^*V=P, VV^*=P\}\leq [\mathcal{P}:\mathcal{N}]
\end{align*}
where $\mathcal{P}$ is the intermediate subfactor associated to the biprojection $P$.
In particular,
\begin{align*}
    \#\mathfrak{S}_2\leq [\mathcal{P}_{\varphi}:\mathcal{N}],
\end{align*}
where $\mathcal{P}_{\varphi}$ is the intermediate subfactor associated to the biprojection $P_{\varphi}$.
\end{lemma}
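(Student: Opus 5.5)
The idea is to reduce to the intermediate inclusion $\mathcal{N}\subseteq\mathcal{P}$ and use the preceding remark: a unitary bishift has Fourier transform equal to a multiple of a trace-one projection.

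\textbf{Step 1: reduction to $\mathcal{N}\subseteq\mathcal{P}$.} Let $V$ belong to the set in question. Then $V^*V=VV^*=P$, so $V=PVP$ lies in the corner $P(\mathcal{N}'\cap\mathcal{M}_1)P$. This corner is identified with $\mathcal{N}'\cap\mathcal{P}_1$, the first relative commutant of $\mathcal{N}\subseteq\mathcal{P}$. The identification is the standard one: for $x\le P$, the bimodule map $\Phi_x$ factors through $\mathcal{E}_{\mathcal{P}}$, i.e. $\Phi_x=\mathcal{E}_{\mathcal{P}}\Phi_x\mathcal{E}_{\mathcal{P}}$, and it restricts to a $\mathcal{N}$-bimodule map on $\mathcal{P}$.

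Under this identification:
\begin{itemize}
\item $V$ becomes a unitary $V_0$ of $\mathcal{N}'\cap\mathcal{P}_1$;
\item the channel $\Phi_V$ restricts to the bimodule quantum channel $\Phi_V|_{\mathcal{P}}$ of $\mathcal{N}\subseteq\mathcal{P}$, because $\Phi_V$ is unital and maps $\mathcal{P}$ into $\mathcal{P}$;
\item $\mathcal{N}\subseteq\mathcal{P}$ is again irreducible and finite.
\end{itemize}
Consequently $V_0$ is $\mathfrak{F}$-positive for $\mathcal{N}\subseteq\mathcal{P}$, normalized so that $\|\mathfrak{F}_{\mathcal{P}}(V_0)\|_1=[\mathcal{P}:\mathcal{N}]^{1/2}$.

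\textbf{Step 2: classification of the unitaries.} Apply Theorem \ref{thm:bishift}, or more simply the remark just before this lemma, to the unitary $V_0$ in the inclusion $\mathcal{N}\subseteq\mathcal{P}$. Since $\mathcal{S}(V_0)=[\mathcal{P}:\mathcal{N}]$, we get $\mathcal{S}(\mathfrak{F}_{\mathcal{P}}(V_0))=1$. Together with positivity and the normalization, this gives
\[\mathfrak{F}_{\mathcal{P}}(V_0)=[\mathcal{P}:\mathcal{N}]^{1/2}q,\]
where $q$ is a trace-one projection in $\mathcal{P}'\cap\mathcal{P}_2$.

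The Fourier transform is injective, so $V\mapsto q$ is injective. It therefore suffices to count trace-one projections $q$ that can occur.

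\textbf{Step 3: counting the $q$'s.} Trace-one projections in $\mathcal{P}'\cap\mathcal{P}_2$ are minimal. Each one corresponds to a dimension-one, i.e. invertible, $\mathcal{P}$-$\mathcal{P}$ bimodule in the decomposition of ${}_{\mathcal{P}}L^2(\mathcal{P}_1)_{\mathcal{P}}$. Irreducibility of $\mathcal{P}'\cap\mathcal{P}$ makes the bimodule $\mathcal{P}$-central.

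The main obstacle is to avoid infinitely many such $q$ inside a higher-dimensional isotypic component. Here is how I would handle it:
\begin{itemize}
\item If an invertible bimodule $g$ occurs with multiplicity $m$, then $g\otimes\bar g\cong 1$, which occurs in $\mathcal{N}'\cap\mathcal{P}_1$ with multiplicity $\dim(\mathcal{N}'\cap\mathcal{P})=1$ by Frobenius reciprocity. Hence $m=1$, so each invertible summand gives a unique central minimal projection of trace one.
\item The total trace of $\mathcal{P}'\cap\mathcal{P}_2$ equals $tr_2(1)=[\mathcal{P}:\mathcal{N}]$. Distinct trace-one central projections are orthogonal, so there are at most $[\mathcal{P}:\mathcal{N}]$ of them.
\end{itemize}
This gives the bound. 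The statement for $\mathfrak{S}_2$ is then the special case $P=P_{\varphi}$.
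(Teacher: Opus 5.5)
Your proposal is correct and follows the paper's route. You pass to $\mathcal{N}\subseteq\mathcal{P}$, where $V$ becomes an $\mathfrak{F}$-positive unitary in $\mathcal{N}'\cap\mathcal{P}_1$ whose Fourier transform is $[\mathcal{P}:\mathcal{N}]^{1/2}$ times a trace-one projection in $\mathcal{P}'\cap\mathcal{P}_2$, and then count those projections. Your Step~3 supplies the justification the paper omits for the final bound: each invertible $\mathcal{P}$-$\mathcal{P}$ bimodule $g$ occurs with multiplicity one. The cleanest way to see this is $\dim\mathrm{Hom}(g,\bar\iota\iota)=\dim\mathrm{Hom}(\iota g,\iota)\le 1$, where $\iota={}_{\mathcal{N}}L^2(\mathcal{P})_{\mathcal{P}}$ and $\mathrm{End}(\iota g)\cong\mathrm{End}(\iota)=\mathcal{N}'\cap\mathcal{P}=\mathbb{C}$. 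Hence the trace-one projections are orthogonal minimal central projections, and there are at most $tr_2(1)=[\mathcal{P}:\mathcal{N}]$ of them.
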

\begin{proof}
    Let $\mathscr{P}^{\mathcal{N}\subseteq\mathcal{M}}$ be the irreducible subfactor planar algebra associated to $\mathcal{N}\subseteq\mathcal{M}$.
    Then the planar algebra $\mathscr{P}^{\mathcal{N}\subseteq\mathcal{P}}$ associated to $\mathcal{N}\subseteq\mathcal{P}$ is obtained by adding biprojection $P$ in the following way:
    \begin{align*}
        \raisebox{-0.5cm}{
\begin{tikzpicture}
\begin{scope}[shift={(0,0.8)}]
\draw (0.2,0.9)--(0.2,-0.4);
\draw (0.5,0.9)--(0.5,-0.4);
\draw [blue, fill=white] (0,0) rectangle (0.7,0.4);
\node at (0.35,0.2) {$P$};
\end{scope}
\begin{scope}[shift={(1.3,0.8)}]
\draw (0.2,0.9)--(0.2,-0.4);
\draw (0.5,0.9)--(0.5,-0.4);
\draw [blue, fill=white] (0,0) rectangle (0.7,0.4);
\node at (0.35,0.2) {$P$};
\end{scope}
\draw [blue, fill=white] (0,0) rectangle (2,0.5);
\node at (1,1) {$\cdots$};
\node at (1,0.65) {$\cdots$};
\end{tikzpicture}}\;.
    \end{align*}
Hence for each bishift of biprojection $V$ in $\mathcal{N}'\cap\mathcal{M}_1$ is unitary in  $\mathcal{N}'\cap\mathcal{P}_1$.
By the above discussion, we see that $\mathfrak{F}_{\mathcal{N}\subseteq\mathcal{P}}(V)$  is the multipication of $[\mathcal{M}:\mathcal{N}]^{1/2}$ and a trace-one projection (equivalently, group-like projection) in $\mathcal{P}'\cap \mathcal{P}_2$.
Therefore, 
\begin{align*}
    &\#\{V \text{ is a bishift of biprojection in}\ \mathcal{N}'\cap\mathcal{M}_1:\ V\ \text{is $\mathfrak{F}$-positive}, \|\mathfrak{F}(V)\|_1=[\mathcal{M}:\mathcal{N}]^{1/2},\\&\quad  V^*V=P, VV^*=P\}\leq\# \{\text{trace-one projections in $\mathcal{P}'\cap \mathcal{P}_2$}\}  \leq [\mathcal{P}:\mathcal{N}].
\end{align*}
We obtain the conclusion.
\end{proof}
Now we are able to prove the main result in this section.
\begin{proof}[Proof of Theorem \ref{thm:phase finite upper bound}]
By Lemma \ref{lem:bishift number}, we have that 
$$ \#\mathfrak{S}_2\leq [\mathcal{P}_{\varphi}:\mathcal{N}]\leq  [\mathcal{M}:\mathcal{N}].$$
By \cite[Corollary 1.5]{AGP26}, we have that 
\begin{align*}
    \#\mathfrak{S}_3\leq \#\{\text{intermediate subfactors of $\mathcal{N}\subseteq\mathcal{M}$}\}\leq 2^{\lfloor [\mathcal{M}:\mathcal{N}] \rfloor-1}.
\end{align*}
By Theorem \ref{thm:state equivalent}, 
\begin{align*}
    \#[\varphi]\leq \#\mathfrak{S}_2\times \#\mathfrak{S}_3\leq [\mathcal{M}:\mathcal{N}]2^{\lfloor [\mathcal{M}:\mathcal{N}] \rfloor-1}.
\end{align*}
This completes the proof.
\end{proof}
\begin{example}
    Suppose $G$ is a finite group outly acting on a factor $\mathcal{M}$ and $\mathcal{N}$ is a $G$-invariant subfactor of $\mathcal{M}$. 
    Then $\mathcal{N}\subseteq\mathcal{M}$ is an irreducible inclusion of factors with $[\mathcal{M}:\mathcal{N}]=|G|$.
    The intermediate subfactors are determined by some subgroup $H<G$.
    Let $P_H=\frac{1}{|H|}\sum_{h\in H}h$.
    Then $P_H$ is a biprojection.
    In this case, we have
\begin{align*}
    \#\{V \text{ is a bishift of biprojection}:\ V^*V=VV^*=P_H\}=|H|,
\end{align*}
and 
\begin{align*}
   \# \{Q\ \text{is biprojection}: Q\sim P\}=\#\{gHg^{-1}:g\in G\}\leq |G/H|.
\end{align*}
Hence $\#[\varphi]\leq |G|=[\mathcal{M}:\mathcal{N}]$.
\end{example}
Due to this group example, we conjecture that 
\begin{conjecture}\label{conj:conjecture}
    Suppose $\mathcal{N}\subseteq\mathcal{M}$ is an irreducible finite inclusion of factors and $P$ is a biprojection in $\mathcal{N}'\cap\mathcal{M}_1$.
    Then 
\begin{align*}
     \# \{Q\ \text{is biprojection}: Q\sim P\}\leq [\mathcal{M}:\mathcal{P}].
\end{align*}
\end{conjecture}
\begin{remark}\label{rem:conjecture}
If the conjecture is true, we have that $\#[\varphi]\leq[\mathcal{M}:\mathcal{N}]$.    
\end{remark}

\section*{Acknowlednements}
L. Huang was supported by National Natural Science Foundation of China (Grant No. 12501153).
C. Jiang was supported by Hebei Natural Science Foundation (No. A2023205045) and  by National Natural Science Foundation of China (Grant No. 12471120).
Z. Liu was supported by Beijing Natural Science Foundation (Grant No. Z221100002722017).
Z. Liu and J. Wu were supported by Beĳing Natural Science Foundation Key Program (Grant No. Z220002). 
J. Wu was supported by National Natural Science Foundation of China (Grant No. 12371124).

\bibliographystyle{plain}

\end{document}